\documentclass[11pt]{amsart}

\usepackage[english]{babel}

\usepackage[a4paper,top=2cm,bottom=2cm,left=3cm,right=3cm,marginparwidth=1.75cm]{geometry}

\usepackage{amsmath}
\usepackage{graphicx}
\usepackage{comment}
\usepackage{mathtools}
\usepackage{fullpage}
\usepackage{tikz-cd}
\usepackage{nicefrac}
\usepackage{adjustbox}
\usepackage{leftindex}
\usepackage{color}
\usepackage[colorlinks=true, allcolors=blue]{hyperref}
\newtheorem{thm}{Theorem}[section]
\newtheorem{lemma}[thm]{Lemma}
\newtheorem{cor}[thm]{Corollary}
\newtheorem{prop}[thm]{Proposition}

\theoremstyle{definition}
\newtheorem{definition}[thm]{Definition}
\newtheorem{ex}[thm]{Example}
\newtheorem{idea}[thm]{Idea}

\theoremstyle{plain}
\newtheorem{rmk}[thm]{Remark}

\newtheorem{exercise}[thm]{Exercise}

\newtheorem{question}[thm]{Question}

\theoremstyle{definition}
\newtheorem{remark}[thm]{Remark}

\newcommand{\R}{\mathbb{R}}
\newcommand{\C}{\mathbb{C}}
\newcommand{\Z}{\mathbb{Z}}

\newcommand{\tb}{\widetilde{b}}

\newcommand{\Hom}{\operatorname{Hom}}

\newcommand{\Q}{\mathbb{Q}}

\newcommand{\mult}{\operatorname{mult}}
\newcommand{\glob}{\mathcal{O}}
\newcommand{\Sym}{\operatorname{Sym}}
\newcommand{\Gr}{\operatorname{Gr}}
\newcommand{\ind}{\operatorname{ind}}
\newcommand{\A}{\mathbb{A}}
\newcommand{\GW}{\operatorname{GW}}
\newcommand{\sheafHom}{\mathcal{H}\hspace{-0.1em}\mathit{om}}
\newcommand{\Puiseux}[2][t]{#2\{\!\{#1\}\!\}}
\newcommand{\trop}{\operatorname{trop}}
\newcommand{\Area}{\operatorname{Area}}
\newcommand{\Tr}{\operatorname{Tr}}
\newcommand{\rk}{\operatorname{rk}}
\newcommand{\sgn}{\operatorname{sgn}}
\renewcommand{\P}{\mathbb{P}}
\renewcommand{\phi}{\varphi}
\newcommand{\Bez}{\operatorname{Bez}}
\newcommand{\m}{\mathfrak{m}}
\newcommand{\Type}{\operatorname{Type}}
\newcommand{\val}{\operatorname{val}}
\newcommand{\tp}{\widetilde{p}}
\newcommand{\ta}{\widetilde{a}}
\newcommand{\tc}{\widetilde{c}}
\newcommand{\Wel}{\operatorname{Wel}}
\newcommand{\type}{\operatorname{type}}
\newcommand{\ev}{\operatorname{ev}}

\title{PCMI lecture notes: Motivic explorations in enumerative geometry}
\author{Sabrina Pauli}
\begin{document}
\maketitle

\begin{abstract}
    These are lecture notes for the PCMI 2024 Graduate Summer School for the mini-workshop on motivic explorations in enumerative geometry.
    Motivic homotopy theory allows to do enumerative geometry over an arbitrary field, which leads to additional arithmetic and geometric information. The goal of the mini-workshop is to explain why and how this works. We will also provide a toolbox for solving enumerative geometry problems in this setting, including the use of tropical geometry.

    We start with two classical examples in enumerative geometry, namely Bézout's theorem and the count of lines on a smooth cubic surface. We then explain how to solve these problems, first over the complex and real numbers, and then over an arbitrary field, using the $\A^1$-degree from motivic homotopy theory. 
    Then we introduce tropical geometry, more precisely we focus on tropical plane curves and show how they can be used to prove Bézout's theorem for curves over an arbitrary field. Finally, we discuss tropical correspondence theorems.
\end{abstract}

%Expected background and references:
%\begin{itemize}
%\item Some basic notions in intersection theory, characteristic classes and the implications to enumerative geometry \cite[Chapters 1-6]{3264}, \cite{MilnorStasheff}.
%\item The definition of the unstable motivic homotopy category. A well written survey article is \cite{WickelgrenWilliams}.
%\end{itemize}

\section*{Introduction}

These lecture notes are from the mini-workshop Motivic Explorations in Enumerative Geometry, 
held at the 2024 PCMI Graduate Summer School. The aim of the lectures is to show how problems in enumerative geometry can be extended to yield meaningful answers over an arbitrary base field, and how tropical geometry can be used to solve them in this setting.

Enumerative geometry is concerned with counting geometric objects, such as points, lines, or curves, 
that satisfy given algebraic conditions. Classical examples include B\'ezout's theorem, which counts 
the intersection points of hypersurfaces in projective space, and the fact that a smooth cubic surface 
always contains exactly $27$ lines \cite{cayley1849triple}. Over the complex numbers, these results are well understood and can 
be obtained by computing the degree of the top Chern class of a suitable vector bundle. 

Over the real numbers, one replaces the top Chern class with the Euler class, which also yields an enumerative 
invariant. The Poincar\'e--Hopf theorem then tells us that the degree of the Euler class equals the 
signed count of real objects. For instance, in the case of lines on a smooth cubic surface, the signed 
count of real lines is always $3$. Concretely, one can assign to each real line on a smooth real cubic surface a sign, 
either $+1$ or $-1$, in a consistent way, and the sum of these signs is always $3$, independent of the 
chosen surface \cite{Segre,FinashinKharlamovRealCount,OkonekTeleman}, while the actual number of real lines might be greater than $3$ \cite{Schlaefli}, depending on the chosen surface.

Motivic homotopy theory provides a unified framework that recovers both the classical complex counts 
and the real signed counts as special cases. In this setting, the weighted counts take values in the 
Grothendieck--Witt ring $\GW(k)$ of the base field $k$, producing richer invariants that additionally provide arithmetic information. In the two examples above, the top Chern class respectively Euler class 
is replaced by its motivic analogue. This motivic Euler class can be defined as a sum of local 
contributions, in direct analogy with the Poincar\'e--Hopf theorem \cite{KassWickelgrenCubicSurface,BachmannWickelgren}. These local contributions are 
elements of $\GW(k)$. 
There are also other, equivalent ways to define the Euler class in the motivic setting, see e.g. \cite{LevineAspects,BachmannWickelgren}, but we will focus on the approach via the Poincar\'e--Hopf theorem in these lecture notes.

We next introduce a new approach to solving such problems, namely through tropical geometry. 
In tropical geometry one studies piecewise-linear objects, and in tropical enumerative geometry 
one counts these objects. This is often much simpler than in the algebraic setting, since tropical 
objects are of combinatorial nature. 
\emph{Tropicalization} transforms algebraic objects into tropical ones. One can think of this as a degeneration: 
from an algebraic object we extract the combinatorial data of its limit, which then defines the 
corresponding tropical object. Remarkably, tropicalization retains enough information that many 
theorems can be solved on the tropical side, and then transferred back to 
algebraic geometry. 

In these lecture notes we will apply this idea to B\'ezout's theorem for curves, first in the 
classical setting and then over an arbitrary field with values in $\GW(k)$ \cite{JaramilloPuentesPauliBezout}. %For this particular 
%problem tropical geometry is not strictly necessary, unlike in the next example.

In the final part of these lectures we turn to a breakthrough in the use of tropical methods in 
enumerative geometry, namely Mikhalkin's tropical correspondence theorem \cite{Mikhalkin}. 
This result states that 
the number of plane genus $g$ curves of degree $d$ passing through $3d+g-1$ points in general position is equal to the 
corresponding weighted count of tropical curves, where each tropical curve is counted with its appropriate 
multiplicity. 
For genus $0$ there is a well defined weighted count of plane curves with point conditions with weights in $\GW(k)$ \cite{LevineWelschinger,KassLevineSolomonWickelgren}. 
We review very recent work showing that a tropical correspondence holds over a perfect field of characteristic $0$ or with large characteristic, with tropical multiplicities taking values in $\GW(k)$ \cite{JaramilloPuentesPauliCorrespondence,JPMPRnew}.

\subsection*{Outline}
We begin with classical and real enumerative geometry and then explain how motivic methods extend 
these ideas to arbitrary fields, illustrating the techniques through B\'ezout's theorem and the problem 
of counting lines on a smooth cubic surface (lecture 1).  Next, we introduce the Grothendieck--Witt ring 
$\GW(k)$ of a field $k$, provide concrete examples, and review some of its key properties, followed by 
formulas for the local $\A^1$-degree that are essential for our computations (lecture 2). We then turn to tropical 
geometry, focusing on tropical plane curves, and show how tropical methods can be used to prove a 
version of B\'ezout's theorem for curves (lecture 3). Finally, we complete the discussion by proving B\'ezout's 
theorem over an arbitrary field on the tropical side and conclude with tropical correspondence theorems (lecture 4).

\subsection*{Ackknowledgements}
I would like to thank Nathan Tiggemann for catching many typos and suggesting improvements and Marc Levine for a correction.

I would also like to thank the anonymous referee for reading this document thoroughly and providing such detailed corrections.

I acknowledge support by Deutsche Forschungsgemeinschaft (DFG, German Research Foundation) through the Collaborative
Research Centre TRR 326 Geometry and Arithmetic of Uniformized Structures, project number 444845124.

{\hypersetup{linkcolor=black}\tableofcontents}
\setcounter{tocdepth}{1}

\section{Enumerative geometry over different fields}
%Content of the Lecture:
%\begin{itemize}
%    \item First examples in enumerative geometry: Bézout's theorem and the count of lines on a smooth cubic surface.
    
%    Literature: \cite{3264}
%    \item The real story: The Poincaré-Hopf theorem and its applications to Bézout's theorem and the real signed count of lines on a smooth cubic surface.
    
%    Literature: \cite{MilnorStasheff,MilnorDifferentialViewpoint,OkonekTeleman,FinashinKharlamovRealCount}
%    \item The Poincaré-Hopf theorem in motivic homotopy theory.
    
%    Literature: %\cite{KassWickelgrenCubicSurface,BachmannWickelgren,McKeanBezout}
%\end{itemize}

In the first lecture, we start with reviewing two examples from classical enumerative geometry: 
B\'ezout's theorem, which counts the intersection points of $n$ hypersurfaces in projective $n$-space, 
and the count of lines on a smooth cubic surface. 
Both of these results can be obtained by computing the degree of the top Chern class of a suitable vector bundle. 
For the necessary background, see for example \cite{3264}.

Next, we move to the real versions of these problems. 
Here, the top Chern class is replaced by the Euler class, 
and the Poincar\'e--Hopf theorem allows us to interpret this as a signed count of real objects. 
Relevant references for this part are \cite{MilnorStasheff,MilnorDifferentialViewpoint,OkonekTeleman,FinashinKharlamovRealCount}.

Finally, we generalize these ideas to work over an arbitrary base field $k$, 
where the classical count ($k = \mathbb{C}$) and the real signed count ($k = \mathbb{R}$) appear as special cases. 
The tools for this come from motivic homotopy theory. 
Relevant literature can be found in \cite{KassWickelgrenCubicSurface,BachmannWickelgren,McKeanBezout}.

\subsection{Classical Enumerative Geometry}
In enumerative geometry, we want to count geometric objects, such as points, lines, curves,... that satisfy certain algebraic conditions.
An example of a result in enumerative geometry is one that everyone learns in the first course in algebraic geometry:
\begin{ex}[Bézout's theorem]
    For $i=1,\ldots,n$ let $H_i=V(F_i)\subset \mathbb{P}^n_\C$ be hypersurfaces defined by a homogeneous polynomial $F_i$ of degree $d_i$, such that $H_1\cap\ldots\cap H_n$ is zero-dimensional.
    Then
    \[\sum_{x\in H_1\cap\ldots\cap H_n}\mult_x(H_1,\ldots,H_n)=d_1\cdot \ldots \cdot d_n. \]
    In words, counted with intersection multiplicity, there are always $d_1\cdot \ldots \cdot d_n$ intersection points.
\end{ex}
Here is another famous example.
\begin{ex}[Lines on a smooth cubic surface]
    Let $V(f)\subset \mathbb{P}^3_\C$ be a smooth cubic surface (so $f$ is a homogeneous polynomial of degree $3$). Then 
    \[\#\{\ell\subset V(f): \; \text{$\ell$ a line}\}=27\]
    regardless of the choice of smooth cubic surface.
\end{ex}

Often one can solve problems in enumerative geometry by ``linearization", i.e., by computing degrees of chern classes. We illustrate this for the two examples above.

\begin{ex}[Bézout's theorem continued]
\label{ex:bezoutvb}
    Let $p\colon V\coloneqq \glob(d_1)\oplus\ldots \oplus \glob(d_n)\rightarrow \mathbb{P}^n_\C\eqqcolon X$. Then $\operatorname{rank}V=\operatorname{dim}X$ and thus a general section of this vector bundle has finitely many zeros. Now notice that an $n$-tuple $(F_1,\ldots,F_n)$ of homogeneous polynomials of degree $d_1,\ldots,d_n$ defines a section $\sigma_{F_1,\ldots,F_n}$ of $p\colon V\rightarrow X$ and the zeros of the section are exactly the intersections of the hypersurfaces $H_1,\ldots,H_n$. Thus
    \[\sum_{x\in H_1\cap\ldots\cap H_n}\mult_x(H_1,\ldots,H_n)=\sum_{\text{zeros $z$ of $\sigma_{F_1,\ldots,F_n}$}}\mult(z)\]
    where $\mult(z)$ is the multiplicity of the zero $z$ of $\sigma_{F_1,\ldots,F_n}$.
    For a general section of a vector bundle of rank $n$ over an $n$-dimensional scheme, the top chern class is represented by the $0$-cycle of the zero locus of a general section and thus $\sum_{\text{zeros $z$ of $\sigma_{F_1,\ldots,F_n}$}}\mult(z)=\deg c_n(V)$ where $c_n(V)$ is the top chern class.
    By the Whitney sum formula
    \[\deg c_n(V)=\deg c_1(\glob(d_1))\cdot\ldots\cdot\deg c_1(\glob(d_n))=d_1\cdot \ldots\cdot d_n.\]
\end{ex}

\begin{ex}[Lines on a smooth cubic surface continued]
\label{ex:linescubicsurface}
Let $\Gr(2,4)=\mathbb{G}(1,3)$ be the Grassmannian of lines in $\mathbb{P}^3_\C$,
that is $[\ell]\in \Gr(2,4)$ corresponds to a line $\ell\subset \mathbb{P}_\C^3$.
Further let $\mathcal{S}\rightarrow \Gr(2,4)$ be the tautological bundle.
    Let $p\colon V=\Sym^3\mathcal{S}^*\rightarrow \Gr(2,4)\eqqcolon X$ be the vector bundle given by the third symmetric power of the dual tautological bundle, that is the vector bundle with fibers 
    \[(\Sym^3\mathcal{S}^*)_{[\ell]}=\text{homogeneous degree $3$ polynomials on $\ell$.}\]
    Then a smooth cubic surface $V(f)\subset \mathbb{P}^3_\C$ defines a section $\sigma_f$ of $p\colon V\rightarrow X$, namely the section defined by restricting $f$ to the line, i.e. $\sigma_f([\ell])=f\vert_\ell$. Then
    \[\ell\subset V(f)\Leftrightarrow f\vert_l=0\Leftrightarrow \sigma_f([\ell])=0\]
    and so
    \[\#\{\ell\subset V(f):\; \text{$\ell$ a line}\}=\#\{\text{zeros of $\sigma_f$}\}\]
    One can show that for a smooth cubic surface defined by $f$ all zeros of $\sigma_f$ are simple, i.e. they are zeros of multiplicity $1$. Thus the actual number of zeros of a general section equals the degree of the top chern class
    $\deg c_4(V)$.
    One can use Schubert calculus to compute $\deg c_4(V)=27$.
\end{ex}

\subsection{Real Enumerative Geometry}
In real enumerative geometry one is interested in the geometric objects which are defined over the real numbers.

\begin{ex}[Real lines on a smooth cubic surface]
    On a smooth cubic surface defined by a polynomial $f$ with real coefficients, there can be $3$, $7$, $15$ or $27$ real lines depending on the choice of cubic surface \cite{Schlaefli}.
\end{ex}

This example shows that we lose \emph{invariance} when we count over non-algebraically closed fields. We already observed this in high school: A polynomial $p\in \R[x]$ (in one variable) of degree $d$ has $d$ zeros (if you count with multiplicity) defined over $\C$, but over $\R$ it can happen that there are none if $d$ is even, or only $1$ if $d$ is odd, but it can also happen that all zeros are real.
However, we still want to have an invariant count over non-algebraically closed fields. %Over $\R$ this can be done following the following idea.
\begin{idea}
    Replace the top chern class in the examples with the Euler class $e(V)$.
\end{idea}

\begin{ex}[Real lines on a smooth cubic surface]
There is a ``real version" of the vector bundle from Example \ref{ex:linescubicsurface} $p\colon \Sym^3\mathcal{S}^*\rightarrow \Gr_\R(2,4)$ where $\Gr_\R(2,4)$ is the real Grassmann manifold of $2$-planes in $\R^4$. 
The degree of the Euler class of this real vector bundle equals
    \[\deg e(\Sym^3\mathcal{S}^*\rightarrow \Gr(2,4))=3.\]
\end{ex}
But what does the $3$ tell us? Definition \ref{def:relativeorientationandlocalindex} and Theorem \ref{thm:PHtheorem} (the Poincaré-Hopf theorem) will tell us.
For this recall the degree of a map from the $n$-sphere to the $n$-sphere from algebraic topology.
Let $f\colon S^n\rightarrow S^n$ be continuous. Then $f$ induces $f_*\colon \widetilde{H}_n(S^n)\cong \Z\rightarrow \widetilde{H}_n(S^n)\cong \Z$ a map in the $n$th reduced singular homology. The degree of $f$ is the image of $1$ under this map $\deg f=f_*(1)$. When $f\simeq g\colon S^n\rightarrow S^n$ are homotopic, they induce the same map in homology and thus have the same degree. So we get
\begin{equation}
\label{eq:degree}
\deg \colon [S^n,S^n]\rightarrow \Z\end{equation}
where $[S^n,S^n]$ denotes the homotopy classes of maps from $S^n$ to $S^n$. The degree map $\deg$ is in fact an isomorphism.

\begin{definition}
\label{def:relativeorientationandlocalindex}
Let $p\colon V\rightarrow X$ be a real vector bundle of rank $n$ over a real smooth $n$-manifold.
\begin{itemize}
     \item 
     Let $T_X$ be the tangent bundle of $X$.
        A \emph{relative orientation} of $p\colon V\rightarrow X$ is an isomorphism of line bundles $\rho\colon \sheafHom(\det T_X,\det V)\cong \underline{\R}$, where $\underline{\R}$ denotes the trivial line bundle. We say that a vector bundle $p\colon V\rightarrow X$ is \emph{relatively orientable} if a relative orientation exists, and that the vector bundle is \emph{relatively oriented} if we have chosen a relative orientation.
        \item Assume $p\colon V\rightarrow X$ is relatively oriented with relative orientation $\rho$. Let $x\in X$ and choose a small neighborhood $U$ of $x$ such that there is a local trivialization of $V\vert_U$. We say that local coordinates around $x$ and a trivialization $V\vert_U\cong U\times \R^n$ are \emph{compatible} with $\rho$ if the distinguished section of $\sheafHom(\det T_X\vert_U,\det V\vert_U)$ taking the distinguished basis of $\det T_X\vert_U$ to $\det V\vert_U$ is sent to $1$ by $\rho$. 
          \item Assume $p\colon V\rightarrow X$ is relatively oriented by $\rho$ and $\sigma\colon X\rightarrow V$ is a section. Let $x$ be an isolated zero of $\sigma$. Locally around $x$ we can choose coordinates and a trivialization of $V$ compatible with $\rho$. Then the section $\sigma$ is given locally by $\sigma\colon \R^n\rightarrow \R^n$ ($x$ corresponds to $0$ in the source) in these coordinates and with this trivialization. The \emph{local index} $\ind_x\sigma$ of $\sigma$ at $x$ is the local degree of $\sigma$ at $0$, i.e. it is determined by
          \[\sigma_*\colon H_n(\R^n,\R^n\setminus \{0\})\cong \Z\xrightarrow{\cdot \ind_x\sigma} H_n(\R^n,\R^n\setminus \{0\})\cong \Z\]
        or equivalently it is the degree of
     \[\overline{\sigma}\colon \nicefrac{B_\epsilon(x)}{\partial B_\epsilon(x)} \simeq S^n\rightarrow \nicefrac{B_{\epsilon'}(0)}{\partial B_{\epsilon'}(0)}\simeq S^n \]
     where $B_\epsilon(x)\subset X$ is a small ball around $x$ and $B_{\epsilon'}(0)\subset \R^n$ is a small ball around $0$.
\end{itemize}  
     
\end{definition}

\begin{rmk}
\label{rmk:simplezeroofasection}
    In case $x$ is a simple zero of $\sigma$, that is a zero with multiplicity $1$, then locally $\sigma$ is a homeomorphism and thus $\ind_x\sigma\in \{\pm1\}$. In this case $\ind_x\sigma=\det \operatorname{Jac}\sigma(0)$ where we take the Jacobian with respect to the local coordinates and trivialization compatible with the relative orientation as in Definition \ref{def:relativeorientationandlocalindex}.
\end{rmk}
\begin{thm}[Poincaré-Hopf theorem]
\label{thm:PHtheorem}
    Let $X$ be a real smooth closed $n$-manifold and $p\colon V\rightarrow X$ be a real relatively oriented vector bundle of rank $n$. Let $\sigma$ be a section of $p\colon V\rightarrow X$ with only isolated zeros. Then
    \[\deg e(V)=\sum_{\text{zeros $x$ of $\sigma$}}\ind_x\sigma.\]
\end{thm}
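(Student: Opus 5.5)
The plan is to reduce to the case of a section transverse to the zero section, and then identify $\deg e(V)$ with the signed count of transverse intersections of $\sigma(X)$ with the zero section $Z\subset V$.

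First I would fix what $e(V)$ means when $V$ is only relatively oriented. The Thom class of $V$ lives in cohomology of $(V,V\setminus Z)$ with coefficients twisted by the orientation sheaf $\det V$. Pulling back along the zero section gives $e(V)\in H^n(X;\det V)$. The relative orientation $\rho$ identifies $\det V\cong\det T_X$, and Poincaré duality with twisted coefficients gives an isomorphism $H^n(X;\det T_X)\cong H_0(X;\Z)\xrightarrow{\deg}\Z$. This is how the integer $\deg e(V)$ is defined. Since any two sections are homotopic through sections (linearly, $t\sigma+(1-t)\sigma'$), the class $\sigma^*(\text{Thom class})=e(V)$ does not depend on the section.

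Second, I would \emph{localize}. Let $x_1,\dots,x_m$ be the isolated zeros; finitely many by compactness of $X$. Choose disjoint small balls $B_i$ around them, with coordinates and trivializations compatible with $\rho$. Then $\sigma$ is nonvanishing on $X\setminus\bigcup B_i$. So $\sigma^*(\text{Thom})$ comes from $H^n(X,X\setminus\bigcup_i \mathrm{int}B_i)\cong\bigoplus_i H^n(B_i,\partial B_i)$, by excision. In the $i$-th summand it is the pullback of the generator of $H^n(\R^n,\R^n\setminus 0)$ under $\sigma\colon (B_i,\partial B_i)\to(\R^n,\R^n\setminus 0)$. By definition this pullback is $\ind_{x_i}\sigma$ times the local orientation class of $B_i$, with orientations matched because the charts are compatible with $\rho$. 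Under the duality isomorphism, each local orientation class maps to the point class $[x_i]$, of degree $1$. Summing gives $\deg e(V)=\sum_i\ind_{x_i}\sigma$.

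Alternatively, for intuition and as a consistency check, one can perturb $\sigma$ inside each $B_i$ to a section transverse to $Z$. The local degree is a homotopy invariant of maps $(B_i,\partial B_i)\to(\R^n,\R^n\setminus0)$, so each $\ind_{x_i}\sigma$ equals the sum of the signs $\det\operatorname{Jac}$ over the simple zeros of the perturbation (Remark \ref{rmk:simplezeroofasection}). This reduces the claim to the transverse case, where it is the classical fact that the Euler class is Poincaré dual to the oriented zero locus.

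The main obstacle I expect is the bookkeeping with twisted coefficients and signs. One must check that the relative orientation $\rho$ exactly matches the twist in $e(V)\in H^n(X;\det V)$ with the orientation twist needed for Poincaré duality on a possibly non-orientable $X$. One must also check that compatibility of the local charts with $\rho$ makes the local identification $H^n(B_i,\partial B_i)\cong\Z$ agree with the one used in the local degree. I would handle both by working throughout with local orientation systems $o(T_X)$ and $o(V)$ and the isomorphism $o(T_X)\cong o(V)$ induced by $\rho$, and by checking all identifications chart by chart.
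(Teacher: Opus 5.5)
The paper gives no proof of this theorem. It quotes Poincaré--Hopf as a classical black box (pointing to Milnor--Stasheff and to Milnor's \emph{Topology from the differentiable viewpoint}) and uses it only to read $\deg e(V)$ as a signed count of zeros. Your proposal is a correct, standard proof.

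Your first step also supplies something the paper leaves implicit: what $\deg e(V)$ means when $V$ is only relatively oriented. You pull back the Thom class in $H^n(V,V\setminus Z;o(V))$, identify $o(V)\cong o(T_X)$ via $\rho$, and cap with the twisted fundamental class in $H_n(X;o(T_X))$.

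The localization step is right. You view $\sigma$ as a map of pairs, split the pulled-back Thom class over the balls, and compare with the canonical generators of $H^n(B_i,B_i\setminus\{x_i\};o(T_X))$, which cap to point classes. This works for an arbitrary relatively oriented $V$ over a possibly non-orientable $X$. By contrast, Milnor's Gauss-map proof treats only $V=T_X$ and identifies the sum with $\chi(X)$, and Milnor--Stasheff work with oriented bundles.

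Your route is also the one that generalizes. The motivic Poincaré--Hopf theorem (Theorem~\ref{thm:A1PHthm}) follows the same pattern. Nisnevich coordinates replace your balls, and the equivalence $U/(U\setminus\{x\})\simeq \P^n_k/(\P^n_k\setminus\{x\})$ replaces excision.

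A few points of hygiene, none of them gaps:
\begin{itemize}
\item The coefficient system should be the orientation local system $o(\det V)$ with $\mathbb{Z}$ coefficients, not the real line bundle $\det V$ itself. Your last paragraph effectively fixes this.
\item The isomorphism $H^n(X,X\setminus\bigcup_i\mathrm{int}B_i)\cong\bigoplus_i H^n(B_i,\partial B_i)$ is not literally an excision. The closure of $X\setminus\bigcup_i B_i$ is not contained in the interior of $X\setminus\bigcup_i\mathrm{int}B_i$. Either add the usual collar/deformation-retraction step, or work directly with $H^n(X,X\setminus\{x_1,\dots,x_m\})\cong\bigoplus_i H^n(B_i,B_i\setminus\{x_i\})$.
\item The perturbation paragraph is not needed. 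It is also not an independent proof, since it appeals to the transverse case of the statement being proved.
\end{itemize}
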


\begin{ex}[Lines on a smooth cubic surface continued]
\label{ex:reallines}
If the cubic surface $V(f)$ is smooth, then all zeros of the corresponding section $\sigma_f$ are simple. So the local indices are either $+1$ or $-1$. 
That is, we get 
\[3=\deg e(V)=\sum_{\ell\subset V(f)}\ind_{[\ell]}\sigma_f\]
an invariant signed count of real lines on a smooth real cubic surface.
In fact, one can describe the sign of a real line intrinsically, i.e. without choosing a local trivialization and local coordinates, and this has already been done by Segre \cite{Segre} in the first half of the 20th century: Take a hyperplane in $\mathbb{RP}^3$ containing the line and intersect it with the cubic surface. The intersection is a degree-$3$ curve, hence the union of the line and a conic. This conic meets the line in two points, which defines an involution on the line (by exchanging the intersection points as the hyperplane varies). Segre classified the line as \emph{hyperbolic} or \emph{elliptic} depending on whether the involution has real fixed points, i.e. whether the intersection points are defined over $k$. One always has
\[
\#\text{hyperbolic lines} - \#\text{elliptic lines} = 3,
\]
independent of the chosen cubic surface as observed in \cite{OkonekTeleman}.

Geometrically one can thing of this like this: 
Note that the real projective line $\mathbb{RP}^1 \cong S^1$ is topologically a circle. Orient this circle.
As you move along this circle inside the cubic surface and choose a continuous frame of the tangent bundle so that one vector always points along the circle, 
the frame will either make a full turn or come back to where it started without turning, just moving back and forth \cite{BenedettiSilhol}.
In the first case the line has local index $-1$ and in the second case it has local index $+1$. So the local index depends on how the line is embedded in the cubic surface.
\end{ex}

\begin{ex}[Bézout continued]
\label{ex:bezoutreal}
    The real analog $p\colon V=\glob(d_1)\oplus\ldots\oplus \glob(d_n)\rightarrow \R\P^n $ of the vector bundle from the Bézout example \ref{ex:bezoutvb} is relatively orientable if and only if $\sum d_i\equiv n+1\mod 2$. 
    
    For example, if $n=1$, this is the case if $d_1$ is even. 
    In this case, Bézout's theorem counts the zeros of a polynomial in one variable. The Euler class of an odd rank bundle is zero in the real setting. If the derivatives at the zeros do not vanish, then the local index is given by the sign of the derivative by Remark \ref{rmk:simplezeroofasection} and thus the sum of these signs is always zero (see the left picture in Figure \ref{fig:realbezout}), while the number of real zeros depends on the polynomial.

   For $n=2$ the vector bundle is relatively orientable if $d_1+d_2$ is odd. In this case, the local index at a real intersection records the order in which the two curves intersect after orienting them (see the right image in Figure \ref{fig:realbezout}). It can be computed as the sign of the determinant of the Jacobian at the intersection. Since $V$ has an odd rank summand, applying the Whitney sum formula, we get that the degree of the Euler class of $V$ is zero, and thus the signed count of intersection points is zero. 

   In fact, for any $n$ and $d_1,\ldots,d_n$ such that $\sum d_i\equiv n+1\mod 2$, the signed count of intersection points will be zero since the corresponding Euler class is zero by the same argument: One computes the degree of the Euler class of a direct sum of line bundles. By the Whitney sum formula, this Euler class is the product of the Euler classes of the individual line bundles. However, each of these factors vanishes, since a line bundle has odd rank, and hence its Euler class is zero.
   If $\sum d_i \not\equiv n+1 \pmod{2}$, then no relative orientation exists. Consequently, one cannot define the degree of the Euler class, nor can one obtain well-defined local indices. This means we do not get a meaningful answer to the real signed counting problem.

\begin{figure}
\begin{tabular}{cc}
\includegraphics[scale=0.13]{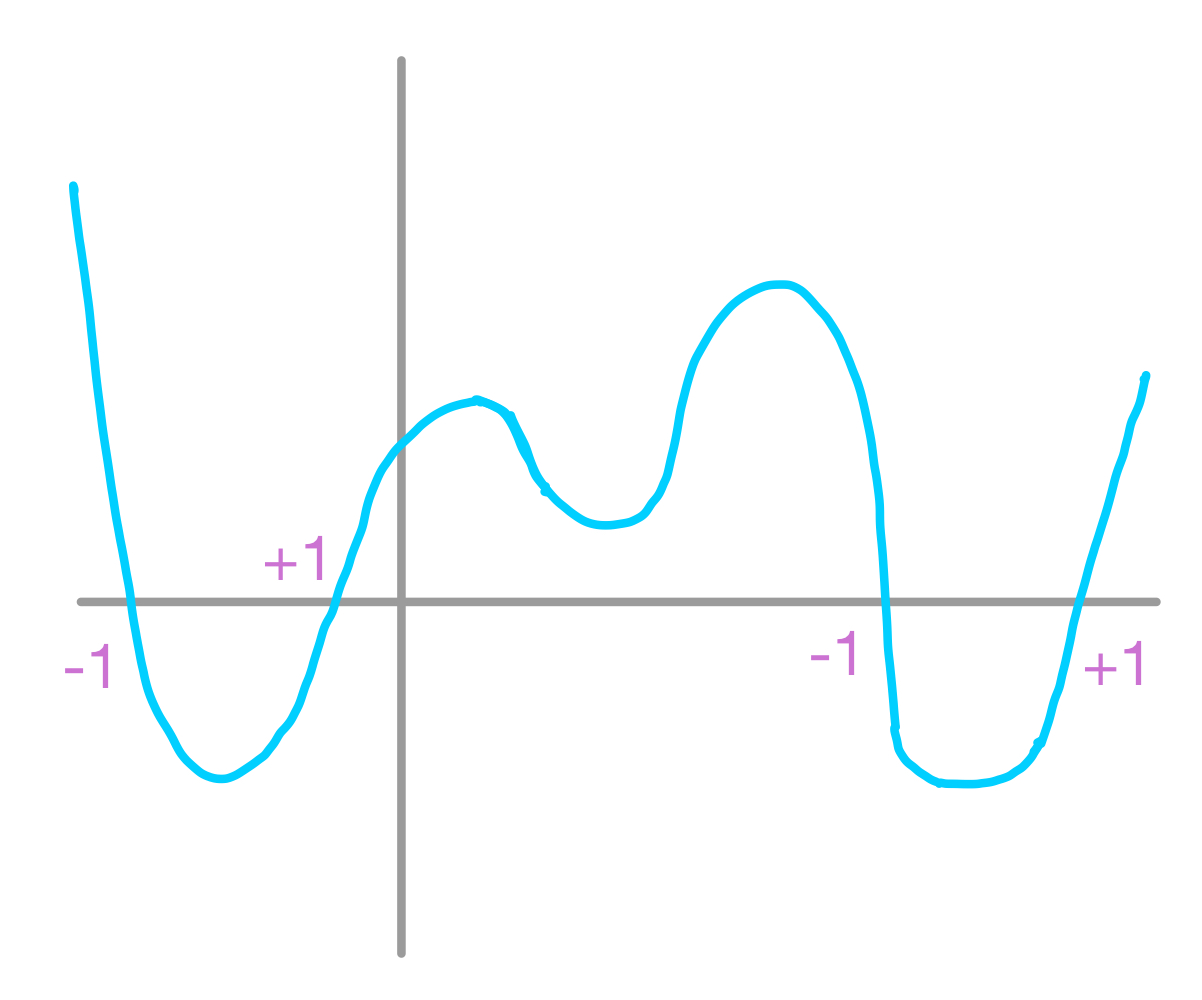}
&\includegraphics[scale=0.1]{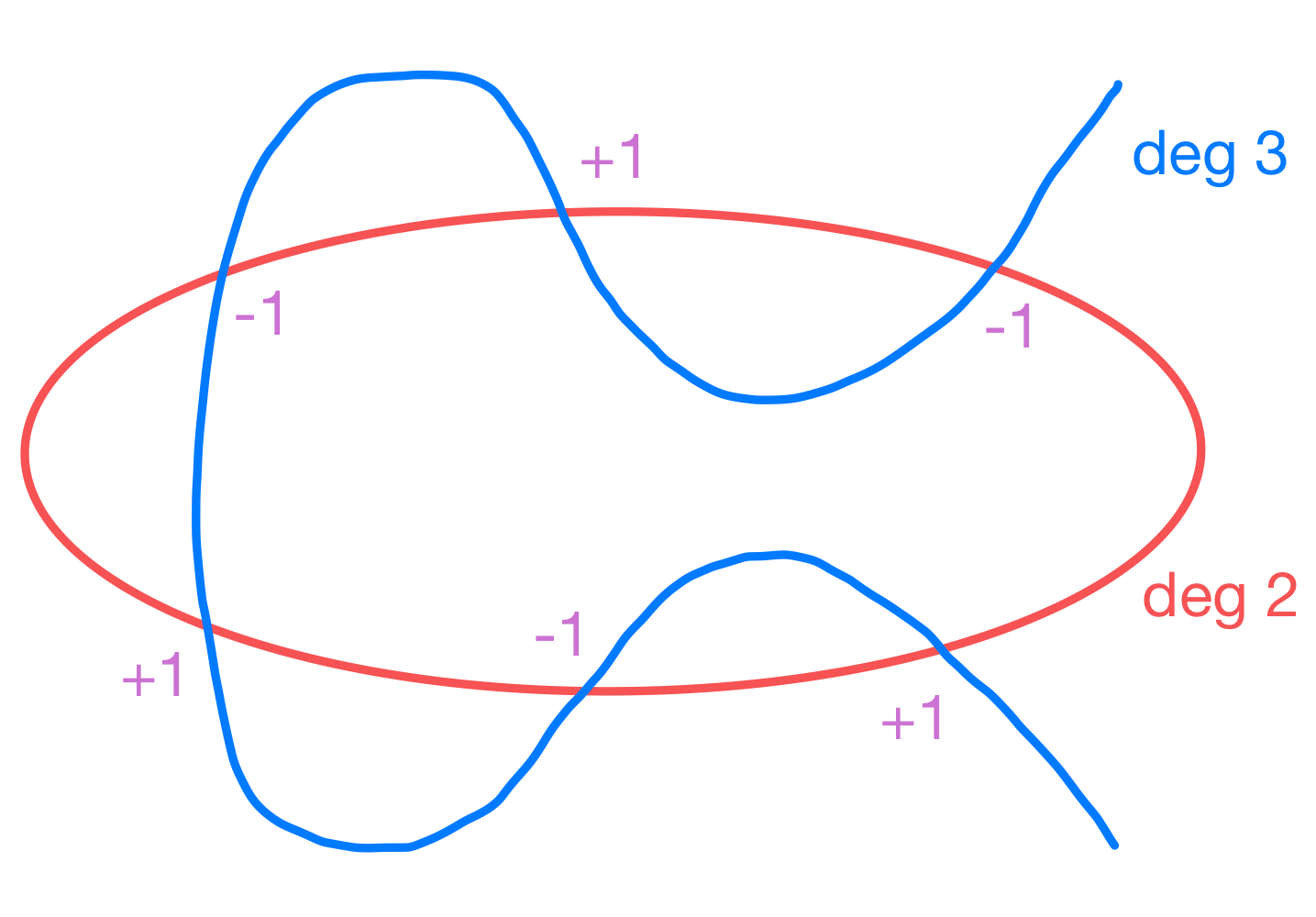}
\end{tabular}
\caption{Real Bézout for $n=1$ and $n=2$.}
\label{fig:realbezout}
\end{figure}

\end{ex}

\subsection{Enumerative Geometry over an arbitrary field $k$}
Next we want to answer the following questions. 
\begin{question}
    What about other (non-algebraically closed) fields $k$?
    Is there some theory that knows about the results in classical and real enumerative geometry but also has information about other fields?
\end{question}
To answer these questions we need to borrow a result from motivic homotopy theory, the analog of the degree \eqref{eq:degree} we used to define the local index.

\begin{thm}[Morel]
Let $k$ be a perfect field. Then there exists a well-defined degree map
    \[\deg^{\A^1}\colon [\nicefrac{\P^n_k}{\P^{n-1}_k}, \nicefrac{\P^n_k}{\P^{n-1}_k}]_{\A^1}\rightarrow \GW(k)\]
    Here $[-,-]_{\A^1}$ denotes the motivic/$\A^1$-homotopy classes and $\GW(k)$ the Grothendieck-Witt ring of $k$. 
\end{thm}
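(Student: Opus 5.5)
Since $\P^n_k/\P^{n-1}_k$ is $\A^1$-equivalent to the motivic sphere $S^{n,n}=(S^1)^{\wedge n}\wedge\mathbb{G}_m^{\wedge n}$, the plan is to identify the source with an endomorphism group of a motivic sphere and then compute that group. The comparison with spheres is standard: the complement $\P^n\setminus\{[0:\dots:0:1]\}$ is the total space of a line bundle over $\P^{n-1}$, so it is $\A^1$-equivalent to $\P^{n-1}$. Hence
\[\P^n/\P^{n-1}\simeq \P^n/(\P^n\setminus\{pt\})\simeq \A^n/(\A^n\setminus 0)\simeq S^{n,n},\]
where the last two equivalences come from Zariski/Nisnevich excision and from the cofiber sequence $\A^n\setminus 0\to\A^n\to \A^n/(\A^n\setminus 0)$, using $\A^n\setminus 0\simeq S^{n-1,n}$. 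The problem therefore reduces to constructing a map $[S^{n,n},S^{n,n}]_{\A^1}\to\GW(k)$ and checking that it is well defined; it will in fact be an isomorphism for $n\ge 2$.

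The core input is Morel's computation of unstable $\A^1$-homotopy sheaves. I would use the following three results of Morel, over a perfect field. First, the unstable $\A^1$-connectivity theorem, which says that $S^{n,n}$ is $(n-1)$-$\A^1$-connected. Second, the Hurewicz theorem, which gives $\pi_n^{\A^1}(S^{n,n})\cong \mathbf{K}^{MW}_n$, the unramified Milnor--Witt K-theory sheaf. Third, the computation $\mathbf{K}^{MW}_0(k)\cong\GW(k)$, obtained from the presentation of $K^{MW}_*$ by the generators $[u]$ and $\eta$. Combining these with the adjunction $[S^n\wedge \mathbb{G}_m^{\wedge n}, S^{n,n}]=\pi_n^{\A^1}(S^{n,n})_{-n}(k)$ and the contraction formula $(\mathbf{K}^{MW}_n)_{-n}=\mathbf{K}^{MW}_0$, we get for $n\ge2$ an isomorphism
\[[S^{n,n},S^{n,n}]_{\A^1}\cong \mathbf{K}^{MW}_0(k)\cong\GW(k),\]
which we define to be $\deg^{\A^1}$. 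Well-definedness is automatic, because the map is defined on homotopy classes.

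For $n=1$ the space $\P^1$ is not simply connected, so the unstable classes form a nonabelian group, namely a central extension of $\GW(k)$ by $k^\times$. Here I would either define the degree by stabilizing (composing with $\P^1\wedge-$) or, more concretely, use the Bézout form: a pointed endomorphism $f/g$ of $\P^1$ is sent to the class of the Bézout bilinear form of $(f,g)$. Invariance under naive $\A^1$-homotopy is checked directly, and Cazanave's theorem shows that this map factors through the $\A^1$-homotopy classes. I would also check that the resulting degree agrees with the classical degree after taking the rank, and with the topological degree after taking the signature when $k=\R$.

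The main obstacle is Morel's Hurewicz and connectivity theorems. Their proofs need strict $\A^1$-invariance of the homotopy sheaves, which is exactly where the perfectness hypothesis enters. In these notes I would cite them rather than prove them, and I would spell out in detail only the reductions to spheres and to $\mathbf{K}^{MW}_0$.
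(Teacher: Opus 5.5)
The notes give no proof of this statement. It is quoted from Morel as a black box, and the only thing the notes add is its compatibility with the topological degree via complex and real realization (rank and signature). So your proposal does not compete with an argument in the paper. It is a sketch of Morel's own proof:
\begin{itemize}
\item identify $\P^n/\P^{n-1}$ with $S^n\wedge\mathbb{G}_m^{\wedge n}$;
\item compute $\pi_n^{\A^1}$ of it as $\mathbf{K}^{MW}_n$;
\item contract $n$ times;
\item use $K^{MW}_0(k)\cong\GW(k)$.
\end{itemize}
This route explains why $\GW(k)$ is the target, and it shows that $\deg^{\A^1}$ is an isomorphism for $n\ge 2$. The price is that every nonformal step is a deep cited theorem, so it is a roadmap rather than a proof. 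That is reasonable for a statement the notes themselves treat as a black box, and the reductions to spheres you spell out are correct.

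Several attributions should be sharpened.
\begin{itemize}
\item Hurewicz only identifies $\pi_n^{\A^1}(S^n\wedge\mathbb{G}_m^{\wedge n})$ with $\A^1$-homology. That this is $\mathbf{K}^{MW}_n$ is a separate theorem of Morel: $\mathbf{K}^{MW}_n$ is the free strongly $\A^1$-invariant sheaf on $\mathbb{G}_m^{\wedge n}$, equivalently $\pi_{n-1}^{\A^1}(\A^n\setminus 0)\cong\mathbf{K}^{MW}_n$. This is where the real content lies.
\item The identification $[S^n\wedge\mathbb{G}_m^{\wedge n},X]_{\A^1}\cong\pi_n^{\A^1}(X)_{-n}(k)$ is not a formal adjunction. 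Adjunction only gives $\pi_n^{\A^1}(\Omega^n_{\mathbb{G}_m}X)(k)$. Equating this with the $n$-fold contraction is Morel's theorem on $\mathbb{G}_m$-loop spaces, which needs $\A^1$-connectedness and strong $\A^1$-invariance.
\item Your description of the case $n=1$ is inaccurate. The group $[\P^1,\P^1]_{\A^1}\cong\GW(k)\times_{k^\times/(k^\times)^2}k^\times$ is abelian. Contracting Morel's central extension $1\to\mathbf{K}^{MW}_2\to\pi_1^{\A^1}(\P^1)\to\mathbb{G}_m\to 1$ exhibits it as a central extension of $\Z$ by $K^{MW}_1(k)$. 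The kernel of its map to $\GW(k)$ is the group of squares $(k^\times)^2$, not $k^\times$. Only the sheaf $\pi_1^{\A^1}(\P^1)$ is nonabelian.
\end{itemize}
None of this affects the conclusion. For $n=1$, both stabilization via $\P^1\wedge -$ and Cazanave's Bézoutian still produce a well-defined map to $\GW(k)$.
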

We will recall the definition of the Grothendieck-Witt ring in the next section and treat it as a black box for now. Now we simply copy the definitions from Definition \ref{def:relativeorientationandlocalindex} and adapt them to the algebraic/motivic setting. The following was first done in \cite{KassWickelgrenCubicSurface}.

\begin{definition}
%Let $k$ be a perfect field.
Let $p\colon V\rightarrow X$ be an algebraic vector bundle of rank $n$ over a smooth $n$-dimensional $k$-scheme $X$.
\begin{itemize}
     \item 
     Let $T_X$ be the tangent bundle of $X$.
     A \emph{relative orientation} of $p\colon V\rightarrow X$ consists of the data of a line bundle $\mathcal{L}\rightarrow X$ and an isomorphism of line bundles $\rho\colon \sheafHom(\det T_X,\det V)\cong \mathcal{L}^{\otimes 2}$. A vector bundle $p\colon V\rightarrow X$ is \emph{relatively orientable} if a relative orientation exists.
     \item Let $x\in X$ be a closed point and let $U$ be a Zariski open neighborhood of $x$. An étale map $\psi\colon U\rightarrow \A^n_k$ which induces an isomorphism of residue fields at $x$ is called \emph{Nisnevich coordinates}. By \cite[Lemma 19]{KassWickelgrenCubicSurface} Nisnevich coordinates always exist whenever $\dim X=n\ge 1$.
     \item Assume $p\colon V\rightarrow X$ is relatively oriented with relative orientation $\rho$. Let $x\in X$ be a closed point and let $\psi\colon U\rightarrow \A^n_k$ be Nisnevich coordinates of $x$ such that there is a local trivialization of $V\vert_U$. We say that Nisnevich coordinates around $x$ and a trivialization $V\vert_U\cong U\times \A^n_k$ are \emph{compatible} with $\rho$ if the distinguished section of $\sheafHom(\det T_X\vert_U,\det V\vert_U)$ taking the distinguished basis of $\det T_X\vert_U$ to $\det V\vert_U$ is sent to a square by $\rho$, that is the image of the distinguished section can be written as $l\otimes l$ where $l$ is a section of $\mathcal{L}$.
     \item Assume $p\colon V\rightarrow X$ is relatively oriented by $\rho$ and $\sigma\colon X\rightarrow V$ is a section. Let $x$ be an isolated zero of $\sigma$. Then locally around $x$ choose Nisnevich coordinates and a trivialization of $V$ compatible with $\rho$. 
     Nisnevich coordinates allow to express the section $\sigma$ locally around $x$ as a map $\sigma\colon \A^n_{\kappa(x)}\rightarrow \A^n_{\kappa(x)}$, where $\kappa(x)$ is the residue field at $x$, and
     \[\ind_x\sigma\coloneqq \Tr_{\kappa(x)/k}(\deg^{\A^1}_x\sigma)\]
     where $\deg^{\A^1}_x\sigma$ is the ``local $\A^1$-degree" and $\Tr_{\kappa(x)/k}$ is the ``trace map" which both will be defined in the next section
     
    % Then consider the composition 
    % \[U\xrightarrow{\sigma\vert_U} V\vert_U\cong U\times \A_k^n\xrightarrow{\operatorname{pr}_2}\A^n_k.\]
    %Nisnevich coordinates allow to identify $\nicefrac{U}{U\setminus \{x\}}$ with $\nicefrac{\A^n_k}{\A^n_k\setminus\{\psi(x)\}}$ in the $\A^1$-homotopy category. Furthermore, $\A^n_k$ naturally sits inside $\P^n_k$ and in the $\A^1$-homotopy category $\nicefrac{\A^n_k}{\A^n_k\setminus\{\psi(x)\}}\simeq \nicefrac{\P^n_k}{\P^n_k\setminus \{\psi(x)\}}$.
    %The \emph{local index} $\ind_x\sigma$ of $\sigma$ at $x$ is the $\A^1$-degree of 
    %\[\nicefrac{\P^n_k}{\P^{n-1}_{k}}\rightarrow \nicefrac{\P^n_k}{\P^n_k\setminus \{\psi(x)\}}\simeq\nicefrac{U}{U\setminus \{x\}}\xrightarrow{\overline{\sigma}}\nicefrac{\A^n_k}{\A^n_k\setminus \{0\}} \simeq\nicefrac{\P^n_k}{\P^{n-1}_k}. \]
\end{itemize}  
     
\end{definition}

%\begin{rmk}
%    We will define the local $\A^1$-degree of a map $f\colon \A^n_k\rightarrow \A^n_k$ at a zero in Definition \ref{def:localA1degree} and provide a formula for computing it in Proposition \ref{prop:localA1degree}.
%\end{rmk}

\begin{ex}
    For both $\P^n_k$ and $\Gr(2,4)$ for any closed point we can find a Zariski local neighborhood isormophic to $\A^n_{k}$. So here it is straightforward to write down Nisnevich coordinates and express a section $\sigma$ locally as $\sigma\colon \A^n_{\kappa(x)}\rightarrow \A^n_{\kappa(x)}$.
\end{ex}

The following theorem was first proved by Kass-Wickelgren \cite{KassWickelgrenCubicSurface} with some extra constraints and tailored for the example of lines on a smooth cubic surface, and then proved in general by Bachmann-Wickelgren \cite{BachmannWickelgren}.
\begin{thm}
\label{thm:A1PHthm}
Suppose $p\colon V\rightarrow X$ is an algebraic vector bundle of rank $n$ over a smooth proper $n$-dimensional $k$-scheme $X$ relatively oriented by $\rho$ and let $\sigma$ be a section with only isolated zeros. 
Then the \emph{Poincaré Hopf Euler number}
\[n^{\A^1}(V,\rho)\coloneqq \sum_{\text{zeros of }\sigma}\ind_x\sigma\]
is a well-defined element of $\GW(k)$. In particular, this means that $n^{\A^1}(V,\rho)$ is independent of the choice of section.
\end{thm}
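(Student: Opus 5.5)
The plan is to prove independence of the section by connecting any two sections with isolated zeros through a family, and showing that the sum of local indices is locally constant in that family, in the spirit of the classical proof of Theorem \ref{thm:PHtheorem}. Concretely, given sections $\sigma_0,\sigma_1$ with isolated zeros, I would consider the linear interpolation $\sigma_t=(1-t)\sigma_0+t\sigma_1$ over $\A^1_k$. Pulling back along $X\times\A^1_k\to X$ gives a section of $V\boxtimes\glob$ over $X\times\A^1_k$. The relative orientation $\rho$ pulls back as well. If the zero locus $Z\subset X\times\A^1_k$ of this family is finite over $\A^1_k$, then $\pi_*\glob_Z$ is a coherent sheaf on $\A^1_k$. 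The key structure is that, using $\rho$, this sheaf carries a nondegenerate symmetric bilinear form, namely the Scheja--Storch or Eisenbud--Khimshiashvili--Levine form. Its fibre at each $k$-point $t$ is, in the Nisnevich coordinates compatible with $\rho$, the sum over zeros $x$ of $\sigma_t$ of $\Tr_{\kappa(x)/k}\deg^{\A^1}_x\sigma_t$.

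The steps, in order, are as follows.

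\textbf{Step 1.} Identify the local index with the class of the Scheja--Storch form on the local algebra $\glob_{Z,x}$, traced down to $k$. This is the formula for the local $\A^1$-degree to be recalled in the next section. I would also check that this identification is independent of the choice of compatible coordinates and trivialization. Changing them multiplies the distinguished section by a unit which $\rho$ sends to a square, so the form changes only by a square scalar. This is exactly why relative orientation is needed.

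\textbf{Step 2.} Show that the global form on $\pi_*\glob_Z$ is a symmetric bilinear form over $k[t]$ that is nondegenerate over the locus where $Z\to\A^1$ is finite flat. Finite flatness holds because zeros of a section of a rank $n$ bundle on an $n$-dimensional smooth space are local complete intersections, hence Cohen--Macaulay.

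\textbf{Step 3.} Invoke Harder's theorem that $\GW(k)\to\GW(k[t])$ is an isomorphism for $k$ perfect, or more simply the $\A^1$-invariance of nondegenerate forms over $k[t]$. This gives that the fibres at $t=0$ and $t=1$ are equal in $\GW(k)$. Hence $\sum_x\ind_x\sigma_0=\sum_x\ind_x\sigma_1$.

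The main obstacle is that the interpolating family may fail to have finite zero locus. For some $t$ the section $\sigma_t$ can vanish along a positive-dimensional subscheme, and properness of $X$ only guarantees that $Z\to\A^1$ is proper, not finite. I would first try to avoid this by reducing to sections in a sufficiently general finite-dimensional space, after twisting by a large power of an ample bundle if needed, so that a generic line of sections has finite zero locus. I would then chain several such lines to connect $\sigma_0$ to $\sigma_1$, passing through generic intermediate sections. Over a finite field, genericity arguments might require a field extension of odd degree, and I would pair that with a trace argument (Springer's theorem) to descend.

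If this fails, the fallback is the Bachmann--Wickelgren approach. There one defines an Euler class in Chow--Witt or motivic cohomology twisted by $\det V$, and proves that it localizes as the sum of local contributions at isolated zeros via purity and the excision isomorphism $\nicefrac{U}{U\setminus\{x\}}\simeq\nicefrac{\A^n}{\A^n\setminus\{0\}}$. Independence of the section is then automatic, since the global class does not depend on $\sigma$.
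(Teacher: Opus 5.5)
The paper gives no proof of this theorem. It credits a version ``with some extra constraints'' to Kass--Wickelgren and the general statement to Bachmann--Wickelgren. Your main route (Steps 1--3) is the Kass--Wickelgren strategy, and it inherits exactly that constraint. It compares $\sigma_0$ and $\sigma_1$ only if they can be joined by a chain of families $\sigma_t$ whose zero locus $Z\subset X\times\A^1$ is finite over \emph{all} of $\A^1$. Finiteness over an open subset does not suffice, since Harder's theorem concerns $k[t]$: already $\langle t\rangle$ over $k[t,t^{-1}]$ specializes to $\langle 1\rangle$ and $\langle -1\rangle$. Kass--Wickelgren arrange such chains in their setting by a codimension-$2$ bound on the sections with non-isolated zeros.

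For a general relatively oriented $V$ such chains need not exist, and your repair does not help:
\begin{itemize}
\item Twisting by an ample bundle replaces $V$ by a different bundle with a different Euler number.
\item Multiplying $\sigma$ by a section of the ample bundle adds a divisor to the zero locus.
\item There is no other supply of families, since $H^0(X\times\A^1,\mathrm{pr}_1^*V)=H^0(X,V)\otimes_k k[t]$.
\end{itemize}
For instance, suppose $H^0(X,V)=k\sigma$. This happens for $V=\glob_C(2p)$ on a genus-$2$ curve $C$ with a rational Weierstrass point $w$ and a rational non-Weierstrass point $p$; there $\omega_C\otimes V\cong\glob_C(w+p)^{\otimes 2}$, so $V$ is relatively orientable. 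A family $g(t)\sigma$ has finite zero locus over $\A^1$ only if $g\in k^\times$. So $\sigma$ and $c\sigma$ with $c\notin(k^\times)^2$ are never connected. Yet for odd $n$ one has $\ind_x(c\sigma)=\langle c\rangle\ind_x\sigma$, so the equality $\langle c\rangle\sum_x\ind_x\sigma=\sum_x\ind_x\sigma$ asserted by the theorem must come from somewhere other than a deformation of sections.

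This is why the general case needs your fallback, which is the Bachmann--Wickelgren route the paper cites. Your outline has the right shape, and independence of the section is automatic once the sum of local indices is shown to be the degree of a global class. But the single clause ``localizes as the sum of local contributions via purity and excision'' is where all the content lies. One needs:
\begin{itemize}
\item a pushforward $\widetilde{CH}^n(X,\omega_X)\to\GW(k)$, or its analogue in $\mathrm{KO}$;
\item the identification of the twist $\det V^\vee$ with $\omega_X$ up to $\mathcal{L}^{\otimes 2}$, furnished by $\rho$;
\item a refinement of the Euler class to the zero scheme of $\sigma$;
\item above all, a proof that the contribution of an isolated zero $x$ obtained from purity equals $\Tr_{\kappa(x)/k}\deg^{\A^1}_x\sigma$, computed in Nisnevich coordinates and a trivialization compatible with $\rho$.
\end{itemize}
The last item is where the Scheja--Storch (EKL) comparison and the coordinate-independence of your Step 1 actually enter. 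As written, the fallback asserts this comparison rather than proving it. So the proposal does not yet establish that the sum of the $\ind_x\sigma$ \emph{as defined in the paper} is independent of $\sigma$ in the stated generality.
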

\begin{rmk}
The Poincaré Hopf Euler number does depend on the choice of relative orientation. However, in practice one often drops $\rho$ in the notation, i.e. one writes $n^{\A^1}(V)=n^{\A^1}(V,\rho)$. 
    
\end{rmk}

\section{Formulas for the $\A^1$-degree}
In the second lecture, we begin by introducing the Grothendieck–Witt ring $\GW(k)$ of a field $k$. Recall that this is the target of the $\A^1$-degree map and serves as the natural home for our enumerative geometry invariants.
We then discuss several fundamental properties of $\GW(k)$, including its generators and relations, the trace map, and important structural maps such as the rank and signature. Along the way, we consider key examples, such as $\GW(\C)$ and $\GW(\R)$. We also introduce the Witt ring $\operatorname{W}(k)$. For a more detailed exposition of these topics, see \cite{Lam}.

Next, we define the local $\A^1$-degree and recall explicit formulas for its computation, drawing on the results of \cite{KassWickelgrenEKL,EisenbudLevine,Khimshiashvili,BrazeltonBurklundMcKeanMontoroOpie,BrazeltonMcKeanPauli}.
Finally, we return to our motivating examples, Bézout’s theorem and lines on a smooth cubic surface. The relevant references for these applications are \cite{KassWickelgrenCubicSurface,McKeanBezout}.

\subsection{The Grothendieck--Witt ring $\GW(k)$}

We start with the definition of the Grothendieck-Witt ring of a field $k$. For this
recall that a set $M$ equipped with a binary operation $\circ\colon M\times M\rightarrow M$ is a \emph{monoid} if 
\begin{itemize}
    \item for $a,b,c\in M$ you have $(a\circ b)\circ c=a\circ(b\circ c)$ (associativity),
    \item there exists $e\in S$ such that $e\circ a=a\circ e=a$ for all $a\in M$ (identity element).
\end{itemize}
A monoid $(M,\circ)$ is commutative, if the binary operation is commutative, i.e. $a\circ b=b\circ a$ for all $a,b\in M$.
A \emph{monoid homomorphism} $f\colon (M,\circ_M)\rightarrow (N,\circ_N)$ is a map $f\colon M\rightarrow N$ such that $f(a\circ_Mb)=f(a)\circ_Nf(b)$ for all $a,b\in M$ and $f(e_M)=e_N$ (these are the identity elements of the respective monoids).

If you also had inverses, this would be a group which is abelian if the monoid is commutative.
Groups are quite well understood. For example, finitely generated abelian groups are fully classified. Furthermore, doing computations in groups works much better than in monoids, since you have inverses. So in many situations you want to turn a commutative monoid into an abelian group, which can be done with the following construction.
\begin{definition}
        Let $(M,\circ)$ be a commutative monoid. Its \emph{Grothendieck group} or \emph{group completion} is an abelian group $K_0(M)$ together with a monoid homomorphism $i\colon M\rightarrow K_0(M)$ which satisfies the following universal property. For any abelian group $A$ and monoid homomorphism $f\colon M\rightarrow A$, there exists a unique group homomorphism $g\colon K_0(M)\rightarrow A$ such that 
    $f=g\circ i$.
    \[% https://tikzcd.yichuanshen.de/#N4Igdg9gJgpgziAXAbVABwnAlgFyxMJZABgBpiBdUkANwEMAbAVxiRAFkQBfU9TXfIRQBGclVqMWbAILdeIDNjwEiZYePrNWiEAGkA+sQAU7AJTdxMKAHN4RUADMAThAC2SMiBwQkAJmqaUjpYco4u7oie3kiiElpsDqEgzm5+1NGIsQxYYNogUBA4OFYgAZJ5ADoVMAAeWHA4cACEAATWFlxAA
\begin{tikzcd}
M \arrow[d, "i"] \arrow[r, "f"]         & A \\
K_0(M) \arrow[ru, "\exists! g", dotted,swap] &  
\end{tikzcd}\]
\end{definition}
\noindent
One can show that $K_0(M)$ exists by constructing it (see for example \cite[Section II.1]{Kbook}).

\medskip

For the whole section let $k$ be a field. 
A \emph{symmetric bilinear form} over $k$ is a bilinear map
\[b\colon V\times V\rightarrow k\]
where $V$ is a $k$-vector space such that $b(x,y)=b(y,x)$ for all $x,y\in V$. It is \emph{non-degenerate} if $V\rightarrow \Hom_k(V,k)$ given by $(v\mapsto (x\mapsto b(v,x))$ is an isomorphism.
Two non-degenerate symmetric bilinear forms $b_1\colon V_1\times V_1\rightarrow k$ and $b_2\colon V_2\times V_2\rightarrow k$ are \emph{isometric} if there is a linear isomorphism $\phi\colon V_1\rightarrow V_2$ such that $b_2(\phi(x),\phi(y))=b_1(x,y)$ for all $x,y\in V_1$. This defines an equivalence relation called \emph{isometry}.
Consider the set 
\[M\coloneqq \{\text{non-degenerate symmetric bilinear forms over $k$}\}/\text{isometry}\]
This can be given a monoid structure $(M,\oplus)$ with addition the direct sum: Let $b_1\colon V_1\times V_1\rightarrow k$ and $b_2\colon V_2\times V_2\rightarrow k$ be two non-degenerate symmetric bilinear forms, then $b_1\oplus b_2\colon (V_1\oplus V_2)\times (V_1\oplus V_2)\rightarrow k$ is also a non-degenerate symmetric bilinear form. You can easily check that this operation is still well defined when you pass to isometry classes.

\begin{rmk}
    When $\operatorname{char} k\neq 2$ a non-degenerate symmetric bilinear form $b\colon V\times V\rightarrow k$ defines a non-degenerate quadratic form by 
    \[q\colon V\rightarrow k,\; q(x)=b(x,x)\]
    and given a non-degenerate quadratic form $q\colon V\rightarrow k$ one gets a non-degenerate symmetric bilinear form
    \[b\colon V\times V\rightarrow k,\; b(x,y)=\frac{1}{2}(q(x+y)-q(x)-q(y)).\]
    So equivalently we can also think of elements of $\GW(k)$ as isometry classes of non-degenerate quadratic forms.
\end{rmk}

\begin{definition}
    The \emph{Grothendieck-Witt group $\GW(k)$ of $k$} is the group completion of the monoid of isometry classes of non-degenerate symmetric bilinear forms over $k$, that is
    \[\GW(k)=K_0(M,\oplus).\]
\end{definition}

We can define a second binary operation $\otimes$ on $M$ by taking the tensor product. This turns $\GW(k)$ into a ring.

There is a very nice presentation of $\GW(k)$ which is motivated by the following observation. When $\operatorname{char}k\neq 2$ any non-degenerate symmetric bilinear form $b\colon V\times V\rightarrow k$ over $k$ can be diagonalized, that is one can find a basis for $V$ such that the Gram matrix of $b$ is diagonal with entries $a_1,\ldots,a_n\in k^\times$ (here $n=\dim_kV$). In particular, the isometry class of $b$ equals the isometry class of the direct sum of the non-degenerate symmetric bilinear forms $k\times k\rightarrow k$ defined by $(x,y)\mapsto a_ixy$ for $i=1,\ldots n$. It follows that $\GW(k)$ is generated by the isometry classes of non-degenerate symmetric bilinear forms of the form $k\times k\rightarrow k$ defined by $(x,y)\mapsto a xy$ for $a\in k^\times$. We denote these generators by $\langle a\rangle$.
The following proposition also holds for $\operatorname{char}k=2$.
\begin{prop}
    $\GW(k)$ is generated by symbols $\langle a\rangle$ for $a\in k^\times$ subject to the relations
    \begin{enumerate}
        \item $\langle a\rangle =\langle ab^2\rangle$ for $a,b\in k^\times$,
        \item $\langle a\rangle+\langle b\rangle=\langle a+b\rangle+\langle ab(a+b)\rangle$ for $a,b,a+b\in k^\times$,
        \item $\langle a \rangle\langle b\rangle=\langle ab\rangle$ for $a,b\in k^\times$.
    \end{enumerate}
\end{prop}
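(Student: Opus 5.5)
The plan is to compare $\GW(k)$ with the abstract ring $G$ defined by the presentation. Let $G$ be the free abelian group on symbols $[a]$, $a\in k^\times$, modulo relations (1) and (2), with multiplication defined on generators by (3). One first checks that (3) is compatible with (1) and (2): multiplying (2) by $[c]$ gives another instance of (2) with $ac,bc$, because $ac\cdot bc\cdot(ac+bc)=c^2\cdot c\,ab(a+b)$, and by (1) the factor $c^2$ can be dropped. So $G$ is a commutative ring with unit $[1]$.

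Next, define $\Phi\colon G\to\GW(k)$ by $[a]\mapsto\langle a\rangle$. To see that $\Phi$ is well defined, check the relations on forms. Relation (1) holds via the rescaling $x\mapsto bx$. For relation (2), take the form $\operatorname{diag}(a,b)$ on $k^2$ with $a+b\neq0$. The vector $e_1+e_2$ has value $a+b$. Its orthogonal complement is spanned by $be_1-ae_2$, which has value $ab^2+a^2b=ab(a+b)$. Since $a+b\neq0$, these two vectors form a basis. Relation (3) holds because $\langle a\rangle\otimes\langle b\rangle$ is the form $xy\mapsto abxy$ on $k\otimes k\cong k$. Thus $\Phi$ is a ring homomorphism. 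When $\operatorname{char}k\neq2$, surjectivity follows from diagonalization as remarked in the text.

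The main obstacle is injectivity. The natural route is to build an inverse $\Psi\colon\GW(k)\to G$. By the universal property of $K_0$, it suffices to give a monoid map $M\to G$. For a form $b$, choose an orthogonal basis with values $a_1,\dots,a_n$ and set $\Psi(b)=\sum[a_i]$. The hard part is showing that this is independent of the diagonalization. For this I would use Witt's chain-equivalence theorem: any two orthogonal bases of an anisotropic-or-not form over a field of characteristic $\neq2$ are connected by a chain of bases that differ in at most two vectors at each step. This reduces the problem to the binary case.

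In the binary case, one must show that $\langle a,b\rangle\cong\langle c,d\rangle$ implies $[a]+[b]=[c]+[d]$ in $G$. Here $c=ax^2+by^2$ is represented, and $d\equiv abc\cdot(\text{square})$ by comparing determinants. If $x=0$ or $y=0$, the claim follows from (1). Otherwise, set $a'=ax^2$ and $b'=by^2$. Relation (1) gives $[a]+[b]=[a']+[b']$. Relation (2) then gives $[a']+[b']=[c]+[a'b'c]$, and $[a'b'c]=[abc(xy)^2]=[d]$ by (1). One also has to handle the case $c=0$, i.e.\ $a'+b'=0$, but $c$ is a diagonal value and hence nonzero. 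Then $\Psi$ is additive and inverse to $\Phi$ on generators, which proves the presentation for $\operatorname{char}k\neq2$.

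For $\operatorname{char}k=2$, not every form diagonalizes: alternating forms such as the hyperbolic plane $\mathbb{H}$ do not. One must therefore show separately that, in $\GW(k)$, $[\mathbb{H}]=\langle1\rangle+\langle-1\rangle$, for instance via $\mathbb{H}\oplus\langle1\rangle\cong\langle1,1,1\rangle$, and that cancellation arguments still go through. I would follow the treatment in \cite{Lam} (or Milnor–Husemoller) for this case rather than redo it.
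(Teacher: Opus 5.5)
The notes do not actually prove this proposition. They only explain, for $\operatorname{char}k\neq2$, why the $\langle a\rangle$ generate (via diagonalization), and leave the completeness of the relations to \cite{Lam}. For $\operatorname{char}k\neq2$ your argument supplies exactly that missing part, is correct, and is the standard proof. The following steps are all fine:
the check that (3) preserves the subgroup generated by (1) and (2);
the basis $e_1+e_2,\ be_1-ae_2$ realizing (2);
the reduction of the well-definedness of $\Psi$ to binary isometries via Witt's chain equivalence;
your binary computation ($c=ax^2+by^2$, $d\equiv abc$ modulo squares, then (1) and (2) applied to $a'=ax^2$, $b'=by^2$).

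The gap is characteristic $2$. The notes explicitly include it, and you only defer it; moreover, the deferral as sketched would not work.
First, your $\Psi$ is undefined on alternating forms such as $\mathbb{H}$, since they have no orthogonal basis.
Second, Witt's chain equivalence is, as you yourself state it, a $\operatorname{char}k\neq2$ result. Its inductive step picks, among $p\ge3$ nonzero terms $c_i^2a_i$ with nonzero total, two whose sum is nonzero, and this needs $2\neq0$. For example, in $\langle1,1,1\rangle$ the terms $1,1,1$ sum to $1$, but every pair sums to $0$.
Third, \cite{Lam} assumes $\operatorname{char}\neq2$ throughout, so it cannot be your source here.
Fourth, cancellation genuinely fails: $\langle1\rangle\oplus\mathbb{H}\cong\langle1,1,1\rangle$ while $\mathbb{H}\not\cong\langle1,1\rangle$. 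So ``cancellation arguments still go through'' is not a valid fallback.

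To close the case, set $\Psi(\mathbb{H})=2[1]$. This is consistent: in characteristic $2$, relation (2) for the pairs $(a,1)$ and $(a+1,a)$ (with $a\neq0,1$) gives $[a]+[1]=[a+1]+[a(a+1)]$ and $[a+1]+[a]=[1]+[a(a+1)]$. Hence $2[c]=2[1]$ for all $c\in k^\times$, which matches $\langle a\rangle\oplus\mathbb{H}\cong\langle a,a,a\rangle$.

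What remains is the real content of the characteristic-$2$ case: you need a proof that $\sum[a_i]$ does not depend on the chosen diagonalization of a non-alternating form. One option is a modified chain argument: when every pair in the support has zero sum, first mix in a basis vector outside the support. Alternatively, give a precise reference that genuinely treats symmetric bilinear forms in characteristic $2$.
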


\begin{exercise}
\label{exercise:hyperbolicform}
    Show that it holds that 
    \[\langle a\rangle +\langle -a\rangle =\langle 1\rangle+\langle-1\rangle\]
    for all $a\in k^\times$.
\end{exercise}

\begin{definition}
    The class $h\coloneqq \langle 1\rangle +\langle -1\rangle\in \GW(k)$ is called the \emph{hyperbolic form}.
\end{definition}

\begin{definition}
    The \emph{Witt ring} $\operatorname{W}(k)$ is defined by
    \[\operatorname{W}(k)\coloneqq \frac{\GW(k)}{(h)}=\frac{\GW(k)}{\Z\cdot h}\]
    that is we quotient by the ideal generated by $h$ which is indeed just all integer multiples $\Z\cdot h$ of $h$ by Exercise \ref{exercise:hyperbolicform}.
\end{definition}

Let $b\colon V\times V\rightarrow k$ be a non-degenerate symmetric bilinear form over $k$. Then the \emph{rank} of $b$ is defined to be $\operatorname{rk}(b)\coloneqq \dim_kV$. Clearly, isometric non-degenerate symmetric bilinear forms have the same rank and we get a monoid homomorphism $\operatorname{rk}\colon M\rightarrow \Z$ which extends uniquely to $\operatorname{rk}\colon \GW(k)\rightarrow \Z$ by the universal property of the Grothendieck group.

\begin{rmk}
\label{rmk:wittandrankdeterminegw}
    An element of $\GW(k)$ with $\operatorname{char}k\neq 2$ is completely determined by its rank and its image in $\operatorname{W}(k)$.
\end{rmk}

\begin{ex}
    Let $k$ be an algebraically closed field. Then the rank is an isomorphism $\operatorname{rk}\colon \GW(k)\xrightarrow{\cong} \Z$.
\end{ex}

Assume $k\subset \R$ and let $b\colon V\times V\rightarrow k$ be a non-degenerate symmetric bilinear form. We can again choose a basis of $V$ such that the Gram matrix has only diagonal entries $a_1,\ldots,a_n\in k^\times$ with $n=\operatorname{rk}(b)$. The signature $\operatorname{sgn}(b)$ of $b$ equals the number of positive $a_i$ minus the number of negative $a_i$. One can show that the signature also defines a monoid homomorphism $\operatorname{sgn}\colon M\rightarrow \Z$ and thus a group homomorphism $\operatorname{sgn}\colon \GW(k)\rightarrow \Z$.

\begin{ex}
    An element of $\GW(\R)$ is completely determined by its rank and its signature.
\end{ex}

Suppose $L/k$ is a finite separable field extension and let $b\colon V\times V\rightarrow L$ be a non-degenerate symmetric bilinear form over $L$. Then we can transform it into a non-degenerate symmetric bilinear form over $k$ by viewing $V$ as a $k$-vector space and composing $b$ with the field trace $\Tr_{L/k}$ from algebra
\[\Tr_{L/k}(b)\colon V\times V\xrightarrow{b} L\xrightarrow{\Tr_{L/k}}k\]
One checks that this respects isometry classes and thus defines a homomorphism of Grothendieck-Witt groups
\[\Tr_{L/k}\colon \GW(L)\rightarrow \GW(k).\]

\begin{exercise}
    Show that 
    \[\Tr_{\C/\R}(\langle 1\rangle)=h.\]
\end{exercise}

\subsection{The local $\A^1$-degree: formulas}
Now let's go back to the $\A^1$-degree map defined by Morel
\[\deg^{\A^1}\colon [\nicefrac{\P^n_k}{\P^{n-1}_k},\nicefrac{\P^n_k}{\P^{n-1}_k}]_{\A^1}\rightarrow \GW(k).\]

We blackbox motivic homotopy theory but still need to explain two things.
\begin{enumerate}
    \item What is this quotient $\nicefrac{\P^n_k}{\P^{n-1}_k}$?
    \item What is $[-,-]_{\A^1}$?
\end{enumerate}
Here is the intuition. In motivic homotopy theory we want to define the ``homotopy category" of smooth, separated schemes of finite type over $k$. In particular, one wants many of the constructions from homotopy theory to work here and one wants to have limits and colimits. This is achieved by ``formally adding" limits and colimits.
So if we have a morphism $U\rightarrow X$ of smooth, separated, finite type schemes over $k$, we can take the colimit of the diagram 
\[
\begin{tikzcd}
    U\arrow{r}\arrow{d}& X\\
   \operatorname{Spec} k&
\end{tikzcd}
\]
and denote it $\nicefrac{X}{U}$. This explains what $\nicefrac{\P^n_k}{\P^{n-1}_k}$ is.
We do not want to go through the construction of the motivic homotopy category here, since there are several good surveys on this (see for example \cite{WickelgrenWilliams}). We denote $[-,-]_{\A^1}$ the morphisms in this category which answers the second question.

Next we explain how the $\A^1$-degree relates to the degree map in algebraic topology. Assume $k\subset \R$. 
One can define complex and real realization functors that send a scheme to the complex respectively real manifold of its complex respectively real points. These extend to functors from the motivic homotopy category called \emph{complex} resp \emph{real realization} (see for example \cite[$\S3$]{WickelgrenWilliams}).
These realization functors are left adjoints and therefore commute with taking colimits. Consequently, taking the complex respectively real realization of the motivic $n$-sphere $\nicefrac{\P^n_k}{\P^{n-1}_k}$ yields
\[
\nicefrac{\P^n_k(\C)}{\P^{n-1}_k(\C)}\cong\nicefrac{\mathbb{CP}^n}{\mathbb{CP}^{n-1}}
\cong S^{2n}
\qquad\text{and}\qquad
\nicefrac{\P^n_k(\R)}{\P^{n-1}_k(\R)}\cong\nicefrac{\mathbb{RP}^n}{\mathbb{RP}^{n-1}}
\cong S^{n}.
\]
In fact one has the following commutative diagram relating the $\A^1$-degree with the degree from algebraic topology
\[% https://tikzcd.yichuanshen.de/#N4Igdg9gJgpgziAXAbVABwnAlgFyxMJZARgBoAGAXVJADcBDAGwFcYkRkAdTgQQD0wAem78w3ODBwBbLGGZwABN2DluAX1IiBw3gPGSZcuMtWc1lAPrAtxNSA3pMufIRQAmCtTpNW7ZAGUBUkCwSntSR2w8AiJyTxoGFjZEDkDgNzANNIzzcMjnGJQ44i9E3xTuAC08kAwolyIPEoSfZJAqmrqC1xJSZu8k9m4AcQB1AAoAawBKey8YKABzeCJQADMAJwgpJDiQHAgkDwHy9s4AYW4cGAAPHGAAWgxZHDg7BxBN7d2aA6QyE5tbgAJSut3uTwgLzeNS+O0QAL+iAALC1BhVOLBFrCtvC9kiAKwfOFHX6HRAAZjRp24WL41l0thx30QBLJSCpgKGnA2k2Z8LZ+3JqK5GLgi0I5jUQA
\begin{tikzcd}
{[S^{2n},S^{2n}]} \arrow[d, "\deg"] & {[\nicefrac{\P^n_k}{\P^{n-1}_k},\nicefrac{\P^n_k}{\P^{n-1}_k}]_{\A^1}} \arrow[l, "\C\text{-points}",swap] \arrow[r, "\R\text{-points}"] \arrow[d, "\deg^{\A^1}"] & {[S^n,S^n]} \arrow[d, "\deg"] \\
\Z                                         & \GW(k) \arrow[l, "\rk",swap] \arrow[r, "\sgn"]                                                                                         & \Z                           
\end{tikzcd}\]

Let's go back to algebraic topology for a bit. Let
\[f\colon S^n\rightarrow S^n\]
and let $y$ be a point in the target with finitely many preimages. For a preimage $x\in f^{-1}(y)$ we can choose a small ball $B(x)\subset S^n$ around $x$ such that $x$ is the only preimage in this ball, also choose a small ball $B(y)$ around $y$ in the target such that the boundary $\partial B(x)$ of $B(x)$ is mapped to the boundary $\partial B(y)$ of $B(y)$. Then the \emph{local degree} $\deg_xf$ at $x$ is the degree of the map
\[\overline{f}\colon S^n\simeq\nicefrac{B(x)}{\partial B(x)}\rightarrow \nicefrac{B(y)}{\partial B(y)}\simeq S^n.\]
Caution: When choosing the homotopy equivalences one needs to watch orientations.

The following result is well-known, see for example Hatcher \cite[Chapter 2.2]{Hatcher}. The sum of local degrees at the preimages of $y$ equals the degree of $f$
\[\deg f=\sum_{x\in f^{-1}(y)}\deg_xf.\]
When $y$ is a regular value then locally around all preimages $x$ the map $f$ is a homeomorphism and thus $\overline{f}$ is a homeomorphism and $\deg_xf\in \{\pm1\}$.
In particular, one has the following formula for $\deg_xf$ from differential topology.
Choose local oriented coordinates around $x$ and $y$. Then in these coordinates the local degree $\deg_xf$ at $x$ 
\begin{equation}
\label{eq:localdegreedifftopformula}
\deg_xf=\operatorname{sign}(\det \operatorname{Jac}f(x))
\end{equation}
is the sign of the determinant of the Jacobian of $f$ evaluated at $x$.

The goal for the rest of this section is to define the local $\A^1$-degree and write down an analogous formula for it.
This builds on work of Kass-Wickelgren \cite{KassWickelgrenEKL}.
Assume $U\subset \A^n_k\subset \P^n_k$ and $U$ is Zariski open in $\A^n_k$. Then in the motivic homotopy category $\nicefrac{U}{U\setminus \{x\}}\simeq\nicefrac{\P^n_k}{\P^n_k\setminus \{x\}}$ is an isomorphism.
\begin{definition}
\label{def:localA1degree}
    Let $x$ be an isolated zero of $f\colon \A^n_k\rightarrow \A^n_k$. Find a Zariski neighborhood $U$ of $x$ such that $f^{-1}(0)\cap U=\{x\}$. Then the \emph{local $\A^1$-degree of $f$ at $x$} is defined to be
    \[\deg^{\A^1}_xf\coloneqq \deg^{\A^1}\left(\nicefrac{\P^n_k}{\P^{n-1}_{k}}\rightarrow \nicefrac{\P^n_k}{\P^n_k\setminus \{x\}}\simeq\nicefrac{U}{U\setminus \{x\}}\xrightarrow{\overline{f}}\nicefrac{\A^n_k}{\A^n_k\setminus \{0\}} \simeq\nicefrac{\P^n_k}{\P^{n-1}_k}\right).\]
   % {\color{red} explain the $\A^1$-homotopy equivalences}
\end{definition}
The following proposition of Kass-Wickelgren generalizes the formula \eqref{eq:localdegreedifftopformula} from differential topology \cite{KassWickelgrenEKL}.
\begin{prop}
\label{prop:localA1degree}
Let $x$ be a preimage of $0$ under $f\colon \A^n_k\rightarrow \A^n_k$ and assume that the residue field $\kappa(x)$ is a separable field extension of $k$. Furthermore, assume that $\det \operatorname{Jac}f (x)\neq 0$. Then
\[\deg^{\A^1}_xf=\Tr_{\kappa(x)/k}(\langle \det \operatorname{Jac} f(x)\rangle)\in \GW(k).\]
\end{prop}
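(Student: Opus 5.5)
The proof has two steps. First reduce to the case of a $k$-rational zero, where the statement says $\deg^{\A^1}_xf=\langle\det\operatorname{Jac}f(x)\rangle$. Then handle a separable point with residue field $L=\kappa(x)\neq k$ by base change to $L$ and a transfer argument.

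\emph{Rational case.} Assume $x\in\A^n_k(k)$; after translating in source and target we may take $x=0$ and $f(0)=0$. Put $A=\operatorname{Jac}f(0)\in \operatorname{GL}_n(k)$. Consider the family $F_t(y)=t^{-1}f(ty)$ for $t\neq 0$, extended by $F_0(y)=Ay$. Writing $f(y)=Ay+(\text{terms of order}\ge 2)$, we get $F_t(y)=Ay+t\cdot(\dots)$, which is polynomial in $t$. Moreover $F_t^{-1}(0)$ is isolated at $0$ uniformly near $t=0$, because the linear term dominates on a Zariski neighbourhood. This gives a naive $\A^1$-homotopy between the maps of pairs induced by $f$ and by $A$, at least after the rescaling $y\mapsto ty$, which is itself $\A^1$-homotopic to the identity on $\nicefrac{\P^n}{\P^n\setminus\{0\}}$. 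The local degree is therefore that of the linear map $A$. Next, write $A$ as a product of elementary matrices and $\operatorname{diag}(1,\dots,1,\det A)$. Elementary matrices are $\A^1$-homotopic to the identity through $E_{ij}(sa)$, $s\in\A^1$. So only $y\mapsto(y_1,\dots,y_{n-1},ay_n)$ with $a=\det A$ remains. This map is the smash of the identity on $(\P^1)^{\wedge(n-1)}$ with the map $y\mapsto ay$ on $\nicefrac{\P^1}{\P^0}$, and Morel's computation gives the latter degree $\langle a\rangle$. Since $\deg^{\A^1}$ is multiplicative under smash product, we obtain $\langle\det A\rangle$.

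\emph{Non-rational case.} Let $L=\kappa(x)$ be finite separable over $k$. Base change to $L$: the point $x$ splits into $[L:k]$ rational points of $\A^n_L$, one for each $k$-embedding of $L$ into $L$ together with the other factors of $L\otimes_kL$. Instead of working with all of these directly, I would use the Nisnevich-local description of the definition. Since $\A^n_L\to\A^n_k$ is étale near the distinguished $L$-point $\tilde x$ lying over $x$ with the same residue field, excision identifies $\nicefrac{\A^n_k}{\A^n_k\setminus\{x\}}$ with $\operatorname{Th}$ of the normal bundle of $x$. This gives $\deg^{\A^1}_xf=\Tr_{L/k}(\deg^{\A^1}_{\tilde x}f_L)$, where $\Tr_{L/k}$ is the transfer. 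The rational case applied over $L$ then gives $\deg^{\A^1}_{\tilde x}f_L=\langle\det\operatorname{Jac}f(x)\rangle\in\GW(L)$, which yields the formula.

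The main obstacle is this last identification: showing that the geometric transfer arising from collapsing onto the closed point $x$ agrees with the field trace on $\GW$ defined above. Rather than prove this from motivic foundations, I would use the Eisenbud--Khimshiashvili--Levine description of the local degree proved by Kass--Wickelgren: $\deg^{\A^1}_xf$ is the class of the bilinear form on the local algebra $Q=\glob_{\A^n,x}/(f_1,\dots,f_n)$ given by $(a,b)\mapsto\eta(ab)$ for a distinguished functional $\eta$. When $\det\operatorname{Jac}f(x)\neq 0$ we have $Q=L$, and $\eta$ is $\Tr_{L/k}(\,\cdot\,/\det\operatorname{Jac}f(x))$. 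This form is exactly $\Tr_{L/k}\langle 1/J\rangle=\Tr_{L/k}\langle J\rangle$ with $J=\det\operatorname{Jac}f(x)$, using relation (1) of the presentation of $\GW$.
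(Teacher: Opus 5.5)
\textbf{Rational case.} This part is fine. Your family $F_t(y)=t^{-1}f(ty)$ satisfies $F_1=f$ and $F_0=A$, so it is already the homotopy you need. Drop the remark that the rescaling $y\mapsto ty$ is $\A^1$-homotopic to the identity. For fixed $t\in k^\times$ that map has degree $\langle t^n\rangle$, and you never use it.

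The uniform isolation should be argued in the Zariski topology, not by ``the linear term dominates''. Put $Z'=f^{-1}(0)\setminus\{0\}$. Every zero of $F$ off $\{0\}\times\A^1$ lies in the closed set $\{(y,t): ty\in Z'\}$, and this set meets neither $\{t=0\}$ nor $\{0\}\times\A^1$. Its complement is the neighbourhood you need. The rest of the rational case (elementary matrices, Morel's $\langle a\rangle$, multiplicativity under smash product) then gives $\langle\det A\rangle$.

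\textbf{Non-rational case: this is the genuine gap.} The notes give no proof and cite Kass--Wickelgren. Once the rational case is known, the whole content of the proposition is the identity $\deg^{\A^1}_xf=\Tr_{L/k}(\deg^{\A^1}_{\tilde x}f_L)$. Equivalently, you must show the following. Take the collapse onto $x$, which lands in $(\P^1)^{\wedge n}\wedge(\operatorname{Spec}L)_+$. Compose it with a class $\langle u\rangle\in\GW(L)$ and then with the projection $(\operatorname{Spec}L)_+\to(\operatorname{Spec}k)_+$. The result must be $\Tr_{L/k}\langle u\rangle$.

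You correctly flag this step, but your bypass through the EKL description is circular, for two reasons.
\begin{itemize}
\item When $\det\operatorname{Jac}f(x)\neq 0$ you have $Q=L$. The statement ``$\deg^{\A^1}_xf$ is the class of $(a,b)\mapsto\Tr_{L/k}(ab/J)$'' is then literally the proposition you are trying to prove.
\item Kass--Wickelgren prove the EKL comparison only at $k$-rational zeros, as the notes say. Their argument deforms $f$ until the zero splits into simple zeros, which in general have non-rational separable residue fields, and applies exactly this proposition at those zeros. The later extension of EKL to separable residue fields, by Brazelton--Burklund--McKean--Montoro--Opie, is proved by establishing precisely the trace identity you wanted to avoid.
\end{itemize}

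So the non-rational case is unproved as written. You need an independent motivic computation showing that the Pontryagin--Thom transfer along $\operatorname{Spec}L\to\operatorname{Spec}k$ induces $\Tr_{L/k}\colon\GW(L)\to\GW(k)$. This kind of input comes from Hoyois's work on motivic transfers and Euler characteristics of finite \'etale schemes. You would combine it with your excision identification at $\tilde x$ and with the rational case over $L$.

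A minor correction: the fiber of $\A^n_L\to\A^n_k$ over $x$ is $\operatorname{Spec}(L\otimes_kL)$. It consists of $[L:k]$ rational points only when $L/k$ is Galois. You only need the diagonal point $\tilde x$.
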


\begin{ex}
    This agrees with the formula for the local degree in topology from differential topology:
    If $k=\R$ and $\kappa(x)=\R$ then $\deg^{\A^1}_xf=\langle \operatorname{sign}(\det \operatorname{Jac} f(x))\rangle$. 
\end{ex}

So what if $\det \operatorname{Jac}f (x)= 0$ or $\kappa(x)$ is not a separable field extension of $k$?
There are more general formulas. Let's first consider the case that $\det \operatorname{Jac}f (x)= 0$. For this case, Eisenbud-Levine \cite{EisenbudLevine} and independently Khimshiashvili \cite{Khimshiashvili} found a formula for the local degree in topology, that is over $k=\R$. It was shown by Kass-Wickelgren \cite{KassWickelgrenEKL} that the same formula works for the local $\A^1$-degree when $\kappa(x)=k$ and then later generalized by Brazelton-Burklund-McKean-Montoro-Opie for when $\kappa(x)/k$ is separable. Finally, in \cite{BrazeltonMcKeanPauli} it is shown that the ``multivariate Bézoutian'' gives a formula for the local $\A^1$-degree that always works, in particular even when $\kappa(x)/k$ is not separable.
All of this builds on work of Scheja-Storch \cite{SchejaStorch}, who found a canonical way to assign a non-degenerate symmetric bilinear form to a complete intersection and \cite{KassWickelgrenEKL,BachmannWickelgren} showed that the class of this non-degenerate symmetric bilinear form agrees with the local $\A^1$-degree.
Note that there is now also a Macaulay2 package that can compute local $\A^1$-degrees \cite{M2package}, see also \cite{pauliM2}.

The following is the general formula for the (local) $\A^1$-degree as in \cite{BrazeltonMcKeanPauli} building on \cite{SchejaStorch,KassWickelgrenEKL,BachmannWickelgren}.
Let $f=(f_1,\ldots,f_n)\colon \A^n_k\rightarrow \A^n_k$ be a map with only finitely many zeros. In other words
\[Q\coloneqq \frac{k[x_1,\ldots,x_n]}{(f_1,\ldots,f_n)}\]
is a finite dimensional $k$-vector space. Let $m=\dim_kQ$. Choose a basis $a_1,\ldots,a_m$ for $Q$ as a $k$-vector space. 
Set
\[\Delta_{ij}\coloneqq \frac{f_i(Y_1,\ldots,Y_{j-1},X_j,\ldots,X_n)-f_i(Y_1,\ldots,Y_j,X_{j+1},\ldots,X_n)}{X_j-Y_j}\]
for $i,j\in \{1,\ldots,n\}$.
Then the \emph{multivariant Bézoutian} $\operatorname{Bez}(f_1,\ldots,f_n)$ of $f_1,\ldots,f_n$ is
the image of $\det (\Delta_{ij})_{i,j=1}^n$ in $\frac{k[X,Y]}{(f(X),f(Y))}\cong Q\otimes_kQ$.
A basis for $Q\otimes_kQ$ is given by $a_i\otimes a_j$ for $i,j=1,\ldots, m$ and we can write
\[\operatorname{Bez}(f_1,\ldots,f_n)=\sum_{i,j=1}^mB_{ij}a_i\otimes a_j\]
with $B_{ij}\in k$ uniquely. In fact, the matrix $(B_{ij})_{i,j=1}^m$ is symmetric and has non-vanishing determinant. Hence, it defines a non-degenerate symmetric bilinear form and the class of the form equals
\[\deg^{\A^1}f=\sum_{\text{$x$ zero of $f$}}\deg_x^{\A^1}f.\]
One can also compute the local $\A^1$-degrees $\deg_x^{\A^1}f$ using the multivariate Bézoutian. For this one replaces $Q$ above by the local ring $Q_\mathfrak{m}$ where $\mathfrak{m}$ is the maximal ideal corresponding to the zero $x$ of $f$.

\subsection{Back to Bézout's theorem and lines on a smooth cubic surface}
Now let's return to our two examples in enumerative geometry, namely Bézout and lines on a smooth cubic surface.
\begin{ex}[Bézout continued]
\label{ex:bezoutoveranyfield}
Let $p\colon V=\glob(d_1)\oplus\ldots\oplus \glob(d_n)\rightarrow \P^n_k$. Then
    \[\sheafHom(\det T_{\P^n_k},\det V)\cong \omega_{\P^n_k}\otimes \glob(d_1+\ldots+d_n)\cong \glob(-n-1+\sum d_i)\]
    and thus $p\colon V\rightarrow \P^n_k$ is relatively orientable if and only if $-n-1+\sum d_i$ is even. Note that this agrees with the relative orientability condition in the real case in Example \ref{ex:bezoutreal}. For example, when $n=1$ then $d_1$ must be even, and for $n=2$ one $d_i$ must be even and the other has to be odd.
    Assume that all intersections of the hypersurfaces $H_i=V(F_i)$ lie in $U=\{x_0\neq 0\}$. Let $f_i(x_1,\ldots,x_n)=F_i(1,x_1,\ldots,x_n)$.
     In \cite{McKeanBezout} McKean shows that if the hypersurfaces meet transversely at all intersection points, then at $x\in H_1\cap\ldots\cap H_n$ the local index 
    \[\ind_x\sigma_{F_1,\ldots,F_n}=\Tr_{\kappa(x)/k}(\langle\det \operatorname{Jac}(f_1,\ldots,f_n)(x)\rangle)\]
    and thus
    \[n^{\A^1}(V)=\sum_{x\in H_1\cap\ldots\cap H_n}\Tr_{\kappa(x)/k}(\langle\det \operatorname{Jac}(f_1,\ldots,f_n)(x)\rangle). \]
    McKean also shows that
    \[n^{\A^1}(V)=\frac{d_1\cdot\ldots\cdot d_n}{2}h.\]
    Let's compute the rank and the signature of this
    \[\rk\left(\frac{d_1\cdot\ldots\cdot d_n}{2}h\right)=d_1\cdot\ldots\cdot d_n\]
    which is the answer over $k=\C$ and
    \[\sgn\left(\frac{d_1\cdot\ldots\cdot d_n}{2}h\right)=0\]
    which is the real signed count of intersection points.
\end{ex}

\begin{ex}[Lines on a cubic surface continued]
    Kass-Wickelgren \cite{KassWickelgrenCubicSurface} show that $p\colon V=\Sym^3\mathcal{S}^*\rightarrow \Gr(2,4)$ is relatively orientable and that 
    \[n^{\A^1}(V)=\sum_{\ell\subset \{f=0\}}\ind_{[\ell]}\sigma_f =15\langle1\rangle+12\langle-1\rangle.\]
    Here, $f$ can be any degree $3$ homogeneous polynomial in $4$ variables which defines a smooth cubic surface $\{f=0\}\subset \mathbb{P}^3_k$, the sum above is independent of the choice of smooth cubic surface.
    Again this also yields the complex and real counts when taking the rank and the signature:
    \[\rk(15\langle1\rangle+12\langle-1\rangle)=27, \;\sgn(15\langle1\rangle+12\langle-1\rangle)=3. \]
    Kass-Wickelgren also provide an intrinsic way to assign a class $\Type(\ell)\in \GW(\kappa([\ell]))$ to a line $\ell\subset \{f=0\}$ on the cubic surface such that 
    \[\ind_{[\ell]}\sigma_f=\Tr_{\kappa([\ell])/k}(\Type(\ell))\]
    which generalizes Segre's work over $\R$ (see Example \ref{ex:reallines}).
    Namely, assume that $\ell$ is defined over $k$ and consider
    \begin{align*}
        G\colon \ell\cong \P^1_k&\rightarrow \text{$2$-planes in $\P^3_k$ containing $\ell$}\cong \P^1_k\\
        p&\mapsto T_pY
    \end{align*}
    where $Y=\{f=0\}\subset \P^3_k$ is the cubic surface.
    This turns out to be a degree $2$ map and thus there is a non-trivial involution $i\colon \ell\rightarrow \ell$ such that $G\circ i=G$ that is $i$ sends a point to the other point with the same tangent space. The fixed points of this involution are defined over $k(\sqrt{\alpha})$ for some $\alpha\in k^\times$. Kass-Wickelgren show that $\Type(\ell)=\langle\alpha\rangle\in \GW(k)$.
    This in fact agrees with Segre's sign when $k=\R$, on just has to put brackets $\langle \rangle$ around Segre's signs.
    %. In fact, when $k=\R$, $\alpha$ can be chosen to be either $+1$ or $-1$. Fixed points of the involution correspond to where the frame in Example \ref{ex:reallines} makes a turn. If $\alpha=+1$ then this is a real place and one comes back to where one has started if $\alpha=-1$ the frame does a full turn.

\end{ex}

\begin{rmk}
    In general it is a very hard problem to find an intrinsic description of the local index. A very nice survey of what is known is given in \cite{McKeanApollonius}. There are also many open problems listed. The definition of the type of a line on a smooth cubic surface can be generalized to lines on degree $2n-3$ hypersurfaces in $\mathbb{P}^n$ by \cite{pauliquintic} and \cite{EMP}.
\end{rmk}

\section{Plane tropical curves}

In the third lecture, we shift gears and explore the world of \emph{tropical geometry}.  
Tropical geometry studies \emph{piecewise-linear} objects that play a role similar to algebraic varieties in classical algebraic geometry.  
Just as in algebraic geometry, where we work with zero sets of polynomials, here we study objects defined by the \emph{tropical zero locus} of \emph{tropical polynomials}.

There is a process called \emph{tropicalization} that transforms objects from algebraic geometry into their tropical counterparts.  
We will review this process in detail, which requires introducing the field of Puiseux series $\Puiseux{k}$.

In this lecture, we will focus on \emph{plane tropical curves}, since our goal is to prove a tropical version of Bézout's theorem for curves.  
The ideas, however, extend naturally to higher-dimensional tropical varieties.  
For further reading and nice introductions to tropical geometry, we refer to \cite{BrugalleShawIntro,tropical_geometry,MacLaganSturmfels}.

We will conclude the lecture by proving a tropical version of Bézout's theorem for curves.  
In the tropical setting, this turns into a beautiful and purely combinatorial argument.

\subsection{Tropical geometry}
%Often one can translate problems in algebraic geometry to problems in tropical geometry where they can be solved using combinatorics. The great breakthrough in the use of tropical geometry in enumerative geometry was Milkalkin's tropical correspondence theorem, which allows one to translate the count of plane algebraic curves through a given number of points into the count of tropical curves through the same number of points \cite{Mikhalkin} and which we will return to in the last lecture. 

First we explain how to go from algebraic plane curves to tropical plane curves. To do this, we consider algebraic curves over the field of \emph{Puiseux series}, which we define next.
\begin{definition}
    Let $k$ be a field. Then the field of Puiseux series over $k$ is 
    \begin{align*}&\Puiseux{k}\coloneqq \bigcup_{n\ge 0}k((t^{\frac{1}{n}}))\\
    =&\{a_0t^{q_0}+a_1t^{q_1}+\ldots:\;a_i\in k,q_i\in \mathbb{Q}\text{, }q_0<q_1<\ldots,\text{ s.t. $\exists n\in \Z_{>0}$ s.t. $q_i\cdot n\in \Z$ for all $i$}\}.\end{align*}
\end{definition}
\begin{ex}
    When $\operatorname{char} k=0$, then $\overline{k((t))}=\Puiseux{\overline{k}}$.
    %In positive characteristic $p$ this does not suffice: 
    %\[t^{-\frac{1}{p}}+t^{-\frac{1}{p^2}} +t^{-\frac{1}{p^3}} +\ldots\]
    %is algebraic over $k((t))$ as it is a zero of the Artin-Schreier polynomial $x^p-x-t^{-1}$ but not in $\Puiseux{k}$. For more details see \cite{Puiseuxpositivecharacteristic}.
\end{ex}
\begin{exercise}
    Suppose $k$ has characteristic $p$ and is algebraically closed. Consider the following Artin-Schreier polynomial 
    $z^p-z-t^{-1}\in \Puiseux{k}[z]$. Show that 
    \[t^{-\frac{1}{p}}+t^{-\frac{1}{p^2}} +t^{-\frac{1}{p^3}} +\ldots\]
    is a zero of this polynomial and conclude that $\Puiseux{k}$ is not algebraically closed.
\end{exercise}

There is a \emph{valuation} on $\Puiseux{k}$
\begin{equation}\label{eq:valuation}\val\colon \Puiseux{k}\rightarrow \Q\cup\{\infty\}\end{equation}
defined by $\val(a_0t^{q_0}+a_1t^{q_1}+\ldots)=q_0$ and $\val(0)=\infty$. This satisfies
\begin{enumerate}
    \item $\val(a(t))=\infty$ $\Leftrightarrow$ $a(t)=0$,
    \item $\val(a(t)+b(t))\ge\min\{\val(a(t)),\val(b(t))\}$,
    \item $\val(a(t)\cdot b(t))=\val(a(t))+\val(b(t))$.
\end{enumerate}

Let's first assume that $k$ is of characteristic $0$ and algebraically closed. Then $\Puiseux{k}$ is also algebraically closed and of characteristic $0$. Let's try to find points in the zero locus of a polynomial $F\in \Puiseux{k}[z_1,z_2]$.

    We start with the simplest case that is that $\deg F=1$ and we can write 
    \[F(z_1,z_2)=\ta(t)+\tb(t)z_1+\tc(t)z_2\]
    with 
    \begin{align*}
        \ta(t)=at^{\val(\ta)}+h.o.t\in \Puiseux{k}\\
        \tb(t)=bt^{\val(\tb)}+h.o.t.\in \Puiseux{k}\\
        \tc(t)=ct^{\val(\tc)}+h.o.t.\in \Puiseux{k}
    \end{align*}
    where $h.o.t.$ is short for higher order terms in $t$. Next we want to describe the zeros $(\tp_1(t),\tp_2(t))\in \Puiseux{k}^2$
    with
    \begin{align*}
        \tp_1(t)=p_1t^{\val(\tp_1)}+h.o.t.\\
        \tp_2(t)=p_2t^{\val(\tp_2)}+h.o.t.
    \end{align*}
    In order for $$F(\tp_1(t),\tp_2(t))=(at^{\val(\ta)}+h.o.t.)+(bp_1t^{\val(\tb)+\val(\tp_1)}+h.o.t.)+(cp_2t^{\val(\tc)+\val(\tp_2)}+h.o.t.)=0$$ 
    it is necessary that the minimum of the three exponents of the three summands
    \[\{\val(\ta),\val(\tb)+\val(\tp_1),\val(\tc)+\val(\tp_2)\}\]
    is attained at least twice or equivalently
    the maximum in
    \[\{-\val(\ta),-\val(\tb)-\val(\tp_1),-\val(\tc)-\val(\tp_2)\}\]
    is attained at least twice. Only then one gets a linear equation in $p_1$ and $p_2$. 
\begin{rmk}
    In tropical geometry one typically works with either the min-plus or the max-plus convention. These two frameworks are equivalent up to a change of sign, and the distinction arises precisely from the sign choices made in the two equations shown above.
    In these lecture notes we use the max-plus convention.
\end{rmk}
    \begin{ex}
\label{ex:tropicalline}
    Let's look at an example and assume $\val(\ta)=2$, $\val(\tb)=-\frac{1}{2}$ and $\val(\tc)=0$
    and plot for which $x=-\val(\tp_1)$ and $y=-\val(\tp_2)$ this can be solved in Figure \ref{fig:firsttropicaline}.
    \begin{figure}
\begin{tikzpicture}[scale=1]
\draw[gray] (-3,-2)--(2,-2);
\draw[gray] (-1.5,2)--(-1.5,-3);
\node[gray] at (1.8,-1.7) {$x$};
\node[gray] at (-1.2,1.8) {$y$};
\draw[blue,thick] (-3,0)--(0,0)--(0,-3);
\draw[blue,thick] (0,0)--(2,2);
\node[blue] at (0.7,0) {$(1.5,2)$};
\end{tikzpicture}
\caption{$(x,y)\in \mathbb{Q}^2$ for which $F(\tp_1,\tp_2)=0$ can be solved with $x=-\val(\tp_1)$ and $y=-\val(\tp_2)$ in Example \ref{ex:tropicalline}.}
\label{fig:firsttropicaline}
\end{figure}
    This is actually our first example of a tropical plane curve.
\end{ex}

Now let's do this for polynomials $F(z_1,z_2)\in \Puiseux{k}[z_1,z_2]$ of higher degree. Let
 $F(z_1,z_2)=\sum \ta_{ij}(t)z_1^iz_2^j\in \Puiseux{k}[z_1,z_2]$ 
    of degree $d\ge 1$ with $\ta_{ij}(t)=a_{ij}t^{\val(\ta_{ij})}+h.o.t.\in \Puiseux{k}$.
    Then $F(z_1,z_2)$ can only have a zero $(\tp_1,\tp_2)\in \Puiseux{k}^2$
 if
    \begin{equation}
    \label{eq:maxattainedtwice}\max_{ij}\{i\cdot x+j\cdot y-\val(\ta_{ij})\}\end{equation}
    with $x=-\val(\tp_1)$ and $y=-\val(\tp_2)$
    is attained at least twice.

As before, set $x=-\val(\tp_1)$ and $y=-\val(\tp_2)$ for $\tp_1(t)=p_1t^{\val(\tp_1)}+h.o.t.\in \Puiseux{k}$ and $\tp_2(t)=p_2t^{\val(\tp_2)}+h.o.t.\in \Puiseux{k}$. Then
it follows from the properties of the valuation (1)-(3) listed above that
\begin{enumerate}
    \item $x=-\infty$ $\Leftrightarrow$ $\tp_1(t)=0$ (and the same for $y$ and $\tp_2$)
    \item $-\val(\tp_1(t)+\tp_2(t))\le \max\{x,y\}$
    \item $-\val(\tp_1(t)\cdot \tp_2(t))=x+y$
\end{enumerate}
This motivates the following definition of the tropical semi-ring.
\begin{definition}
    The \emph{tropical semi-ring} $(\mathbb{T},\oplus,\odot)$ is defined by
    \begin{itemize}
        \item $\mathbb{T}=\R\cup\{-\infty\}$
        \item $x\oplus y=\max(x,y)$
        \item $x\odot y=x+y$
    \end{itemize}
\end{definition}
This is not a ring/field since there are no inverses to tropical addition $\oplus$. However, $-\infty$ is the neutral element for tropical addition $\oplus$.

\begin{definition}
    A \emph{tropical polynomial} in two variables $x,y$ is of the form
    \[f=\bigoplus b_{ij}\odot x^{\odot i}y^{\odot j}=\max\{b_{ij}+ix+jy\}\]
    where $x^{\odot i}=x\odot\ldots\odot x$ with $i$ tropical factors and $y^{\odot j}=y\odot\ldots \odot y$ with $j$ tropical factors.
    with only finitely many $b_{ij}$ not equal to $-\infty$, that is the neutral element for tropical addition. 
    
    The \emph{tropical vanishing locus} of a tropical polynomial $f$ is given by
    \[V^{\trop }(f)=\{p\in\R^2:\text{the maximum is attained at least twice at $p$}\}.\]
\end{definition}

This might look familiar: Recall that given a polonomial $F(z_1,z_2)\in \Puiseux{k}[z_1,z_2]$, this describes a necessary condition for a zero to exist in terms of the valuation \eqref{eq:valuation}. In fact, one can turn a polynomial
\[F(z_1,z_2)=\sum \ta_{ij}(t)z_1^i z_2^j \in \Puiseux{k}[z_1,z_2]\]
into a tropical polynomial
\[F^{\trop}(x,y)=\bigoplus -\val(\ta_{ij}(t))\odot x^{\odot i}y^{\odot j}.\]
$F^{\trop}$ is called the \emph{tropicalization} of $F$.
%We have seen above that it is necessary for $(\tp_1(t),\tp_2(t))\in \Puiseux{k}^2$ to be in the vanishing locus of $F$, that $(-\val(\tp_1(t),-\val(\tp_2(t))))\in V^{\trop}(F^{\trop})$. 
The following theorem implies that the necessary condition for $(\tp_1(t),\tp_2(t))\in \Puiseux{k}^2$ to be in the vanishing locus of $F$ is actually sufficient.
\begin{thm}[Kapranov \cite{Kapranovstheorem}]
Let $k$ be an algebraically closed field of characteristic $0$. Then
$V^{\trop}(F^{\trop})$ equals the closure of 
\[\{(-\val(\tp_1(t)),-\val(\tp_2(t)))\in \mathbb{Q}^2: F(\tp_1(t),\tp_2(t))=0\}\]
in $\R^2$.
\end{thm}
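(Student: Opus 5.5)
Kapranov's theorem has two inclusions, and I would prove them separately.

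\emph{Image $\subseteq V^{\trop}(F^{\trop})$.} This is the computation done just before the statement. If $F(\tp_1,\tp_2)=0$ with $\tp_1,\tp_2\neq 0$, the terms $\ta_{ij}\tp_1^i\tp_2^j$ have valuations $\val(\ta_{ij})-ix-jy$. If one term had strictly smallest valuation, the sum would have that valuation by the ultrametric property, so it could not be zero. Hence the maximum in \eqref{eq:maxattainedtwice} is attained at least twice, so $(x,y)\in V^{\trop}(F^{\trop})$. Next, $V^{\trop}(F^{\trop})$ is closed: it is a finite union of sets where two affine functions agree and dominate all the others, i.e.\ a finite union of polyhedra. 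Therefore the closure of the image also lies in $V^{\trop}(F^{\trop})$.

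\emph{Reverse inclusion.} The rational points of $V^{\trop}(F^{\trop})$ are dense in it, since its polyhedra are cut out by equations with rational coefficients (the exponents $\val(\ta_{ij})$ lie in $\Q$). So it suffices to lift every $(x,y)\in V^{\trop}(F^{\trop})\cap\Q^2$. I would first reduce to $(x,y)=(0,0)$. Substitute $z_1=t^{-x}w_1$, $z_2=t^{-y}w_2$ and multiply $F$ by $t^{m}$, where $m$ is the maximum value at $(x,y)$. The new polynomial $G(w_1,w_2)$ has all coefficient valuations $\ge 0$, with equality exactly for the terms achieving the maximum. Define the initial form $\operatorname{in}(G)=\sum a_{ij}w_1^iw_2^j$, the sum running over those $(i,j)$ with $\val=0$ after the shift. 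It is a polynomial over $k$ with at least two monomials.

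The main step is to find $(p_1,p_2)\in(k^\times)^2$ with $\operatorname{in}(G)(p_1,p_2)=0$, and then to lift this to a Puiseux solution. For the zero: since $\operatorname{in}(G)$ has at least two monomials, after a generic choice of $p_2\in k^\times$ the one-variable polynomial $\operatorname{in}(G)(w_1,p_2)$ has a nonzero root because $k$ is algebraically closed. If all the monomials share the same $w_1$-degree, I would swap the roles of the variables. For the lift: fix $w_2=p_2$, a constant, and consider $g(w_1)=G(w_1,p_2)\in\Puiseux{k}[w_1]$. This polynomial has nonzero initial form with a root $p_1\neq 0$. By Newton polygon theory, valid because $\Puiseux{k}$ is algebraically closed in characteristic $0$, the number of roots of $g$ of valuation $0$ with leading coefficient $p_1$ equals the multiplicity of $p_1$ as a root of $\operatorname{in}(g)$, which is at least $1$. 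Such a root $\tp_1$ gives the solution $(t^{-x}\tp_1,t^{-y}p_2)$.

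I expect the subtle point to be this Hensel/Newton-polygon lifting when $p_1$ is a multiple root. Using algebraic closedness, I would factor $g=c\prod(w_1-r_l)$ and read off that the initial form is $c_0\prod(w_1-\operatorname{in}(r_l))$ over roots of valuation $0$, times monomials. Comparing this factorization with $\operatorname{in}(g)$ yields a root with $\operatorname{in}(r_l)=p_1$, and this avoids Hensel's lemma entirely.
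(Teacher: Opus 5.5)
Your proof is correct and complete. The paper does not actually prove this statement. The computation just before it gives only your first inclusion: at a zero, the minimum of the valuations $\val(\ta_{ij})-ix-jy$ must be attained twice. The converse is deferred to Kapranov.

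For the reverse inclusion you use the standard initial-form argument, essentially as found in \cite{MacLaganSturmfels}:
\begin{enumerate}
\item rational points are dense in the rational polyhedra making up $V^{\trop}(F^{\trop})$;
\item rescale $(x,y)$ to the origin;
\item find a zero of the non-monomial initial form $\operatorname{in}(G)$ in $(k^\times)^2$;
\item restrict to the line $w_2=p_2$;
\item lift a root of the reduction of $g$ by factoring $g$ over $\Puiseux{k}$.
\end{enumerate}
This gives slightly more than the statement: the image is exactly $V^{\trop}(F^{\trop})\cap\Q^2$, not merely dense in $V^{\trop}(F^{\trop})$. It also shows where each hypothesis enters. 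Algebraic closedness of $k$ is used to find $(p_1,p_2)$. Characteristic $0$ is used only through algebraic closedness of $\Puiseux{k}$ in the factorization step, which fails in characteristic $p$ by the Artin--Schreier exercise.

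When you write it up, make two small points explicit.
\begin{itemize}
\item The reduction of $g=G(w_1,p_2)$ modulo elements of positive valuation is exactly $\operatorname{in}(G)(w_1,p_2)$. Your generic choice of $p_2$ is only needed to make this reduction non-monomial.
\item For roots $r_l$ of negative valuation, rewrite $w_1-r_l=-r_l(1-w_1/r_l)$. Every factor then lies in the valuation ring with nonzero reduction. The leftover scalar has valuation $0$ because the minimal coefficient valuation of $g$ is $0$. Only after this normalization is the reduction of $g$ the product of the reductions of its factors, so that you can read off a root with leading coefficient $p_1$.
\end{itemize}
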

We are ready to define plane tropical curves.

\begin{definition}
    A \emph{plane tropical curve} $\Gamma$ is an embedded weighted graph in $\R^2$ given by the tropical vanishing locus of a tropical polynomial $f$. The weights on the edges are defined as follows. 
    Let $e$ be an edge, then its weight $w(e)$ is
    \[w(e)\coloneqq \max\{\gcd(\vert i-k\vert,\vert j-l\vert):\; \{(i,j),(k,l)\}\in M_e\}\]
    with
    $M_e=\{\{(i,j),(k,l)\} :\; (i,j),(k,l)\in \Z^2,\; (i,j)\neq (k,l)\; \text{s.\ t.\ } \forall (p,q)\in e, f(p,q)=a_{ij}+ip+jq=a_{kl}+kp+lq\}$
    that is $M_e$ ranges over pairs of tuples where the maximum is attained.
\end{definition}
One only labels edges of weight $>1$. When there is no label, it means that the weight of this edge is $1$ (see Figure \ref{fig:extropcurves}).

\begin{rmk}
The definition of the weights of the edges might seem a little confusing and unnecessary at first. However, this is chosen such that the weight an edge equals the lattice length of the dual edge in the \emph{dual subdivision} (see paragraph below Definition \ref{def: Newton polygon}) and such that the \emph{balancing condition} (see Exercise \ref{exercise:balancing}) is satisfied.
    
\end{rmk}
\begin{definition}
    Let $C$ be a plane algebraic curve over $\Puiseux{k}$ defined by $F\in \Puiseux{k}[z_1,z_2]$ and let $\Gamma$ be the tropical curve defined by $F^{\trop}$. We say $C$ \emph{tropicalizes} to $\Gamma$. We also say a point $(\tp_1(t),\tp_2(t))\in \Puiseux{k}^2$ \emph{tropicalizes} to $(-\val(\tp_1(t)),-\val(\tp_2(t)))\in \R^2$. 

\end{definition}
    Note that points in $C=V(F)$ tropicalize to points in $\Gamma=V^{\trop}(F^{\trop})$ by Kapranov's theorem.

\begin{figure}
\begin{tabular}{ccc}
\begin{tikzpicture}
    % Vertex
    %\draw[step=0.5, gray!30] (-3,0) grid (1,4);
    
    %\draw[step=1, gray!30] (-1,-1) grid (3,3);
    \draw[-, thick, blue] (-1,0) -- (0,0) node[midway, right] {};
    \draw[-, thick, blue] (0,-1) -- (0,0) node[midway, right] {};
    \draw[-, thick, blue] (0,0) -- (1,1) node[midway, right] {};
    \draw[-, thick, blue] (1,1) -- (2,1) node[midway, right] {};
    \draw[-, thick, blue] (1,1) -- (1,2) node[midway, right] {};
    \draw[-, thick, blue] (2,1) -- (2,-1) node[midway, right] {};
    \draw[-, thick, blue] (1,2) -- (-1,2) node[midway, right] {};
    \draw[-, thick, blue] (2,1) -- (3,2) node[midway, right] {};
    \draw[-, thick, blue] (1,2) -- (2,3) node[midway, right] {};
    
   \node[below left, scale=0.8, red] at (-0.5, -0.5) {$0$};
   \node[below left, scale=0.8, red] at (1, 0) {$x$};
   \node[below left, scale=0.8, red] at (3, 0) {$2x-2$};
   \node[below left, scale=0.8, red] at (0, 1) {$y$};
   \node[below left, scale=0.8, red] at (1, 3) {$2y-2$};
   \node[below left, scale=0.8, red] at (3, 2.5) {$x+y-1$};
   %\node[below left, scale=0.7, black] at (0, 0) {$(0,0)$};
   % \node[below left, scale=0.8, red] at (1, 1) {$2+x$ is max};
   % \node[below left, scale=0.8, red] at (-1, 3) {$y-1$ is max};
\end{tikzpicture} &
 \begin{tikzpicture}[scale=0.8]
            % \draw[step=1, gray!30] (-2,-2) grid (4,3);
    \draw[-, thick, blue] (-2,0) -- (0,0)--(0,-2) node[midway, right] {};
    \draw[-, thick, blue] (0,0) -- (2,1)--(2,-2) node[midway, right] {};
    \draw[-, thick, blue] (2,1) -- (4,3) node[midway, right] {};

\node[below left, scale=0.8, blue] at (-1, 0.5) {$2$};
   \node[below left, scale=0.8, blue] at (3, 2.5) {$2$};

   \node[below left, scale=0.8, red] at (-0.5, -0.5) {$0$};
   \node[below left, scale=0.8, red] at (1, 0) {$x$};
   \node[below left, scale=0.8, red] at (4, 0) {$2x-2$};
   \node[below left, scale=0.8, red] at (0, 2) {$2y$};

%\node[below left, scale=0.7, black] at (0, 0) {$(0,0)$};
        \end{tikzpicture}
        &
        \begin{tikzpicture}[scale=0.7]
             
    \draw[-, thick, blue] (-1,0) -- (0,0)--(0,-1) node[midway, right] {};
    \draw[-, thick, blue] (0,0) -- (1,1)--(2,0)--(2,-1) node[midway, right] {};
    \draw[-, thick, blue] (2,0) -- (3,0)--(3,-1) node[midway, right] {};
    \draw[-, thick, blue] (3,0)--(4,1)node[midway, right] {};
    \draw[-, thick, blue] (1,1)--(1,2)--(0,3)--(-1,3)node[midway, right] {};
    \draw[-, thick, blue] (1,2)--(2.5,3.5)node[midway, right] {};
    \draw[-, thick, blue] (0,3)--(0,4)--(-1,4)node[midway, right] {};
    \draw[-, thick, blue] (0,4)--(1,5)node[midway, right] {};

\node[below left, scale=0.8, blue] at (1, 1.7) {$2$};

        \end{tikzpicture}

\end{tabular}
\caption{Examples of tropical curves.}
\label{fig:extropcurves}
\end{figure}

\begin{ex}
Consider the following tropical polynomial
      \[f=0\oplus x\oplus y\oplus(-1)\odot x\odot y\oplus(-2)\odot x^{\odot2}\oplus(-2)\odot y^{\odot2}=\max\{0,x,y,x+y-1,2x-2,2y-2\}.\]
      The left picture in Figure \ref{fig:extropcurves} shows the tropical curve defined by $f$. In the figure it is also indicated in red which piecewise linear functions out of $\{0,x,y,x+y-1,2x-2,2y-2\}$ is maximal in which segment.

For this curve all edges are of weight one.
\end{ex}

\begin{ex}This example is inspired by \cite[Figure 3]{BrugalleShawIntro}. It is the tropical curve defined by the tropical polynomial
\[f=0\oplus x\oplus y\oplus y^{\odot 2}\oplus(-2)\odot x^{\odot2}\]
and shown in the middle in Figure \ref{fig:extropcurves}. Here, we have edges of weight bigger than $1$. For example take the edge of weight $2$ on the left, that is the line $y=0$. Here, the maximum is attained by $0$, $y$ and $2y$. The corresponding tropical monomials are $0$, $y$ and $y^{\odot2}$. So for this edge, let's call it $e$, we have  $M_e=\{\{(0,0),(0,1)\},\{(0,1), (0,2)\},\{(0,0),(0,2)\}\}$ and
thus $w(e)=\max\{\gcd(\vert0-0\vert,\vert1-0\vert),\gcd(\vert0-0\vert,\vert2-0\vert),\gcd(\vert0-0\vert,\vert2-1\vert)\}=2$.

For the right edge, let's call it $e'$, the maximum is attained by $2x-2$ and $2y$ and we have $M_{e'}=\{\{(2,0),(0,2)\}\}$ and thus $w(e')=\gcd(2,2)=2$.
\end{ex}
\begin{ex}
    The right picture in Figure \ref{fig:extropcurves} is a tropical curve defined by a degree $3$ tropical polynomial.
    
\end{ex}

\begin{definition}
\label{def: Newton polygon}
    Let $\Gamma $ be a plane tropical curve defined by a tropical polynomial $f=\bigoplus b_{ij}\odot x^{\odot i}\odot y^{\odot j}$. The \emph{Newton polygon} $\operatorname{NP}(f)$ of $f$ is 
    the convex hull
    \[\operatorname{Conv}\{(i,j):\; b_{ij}\neq -\infty\}\subset \R^2\]
\end{definition}

The Newton polygon is a lattice polygon in $\R^2$ meaning that the vertices are in $\Z^2$.
To a tropical polynomial $f=\bigoplus b_{ij}\odot x^{\odot i}\odot y^{\odot j}$ one can assign a lattice subdivision of its Newton polygon called the \emph{dual subdivision} $\operatorname{DS}(f)$ in the following way. One projects the edges of the upper faces of 
\[\operatorname{Conv}(\{(i,j,b_{ij}): b_{ij}\neq-\infty\})\subset \R^3\]
to $\R^2$ via the projection to the first two coordinates.
%{\color{red} Todo: add an example}

One can show that there is the following one-to-one correspondence
\begin{center}
\begin{tabular}{|c|c|}

\hline
\textbf{Tropical curve $\Gamma$ defined by $f$} & \textbf{Dual subdivision $\operatorname{DS}(f)$} \\
\hline
vertex & connected component of $\operatorname{NP}(f)\setminus \operatorname{DS}(f)$ \\

edge of weight $w$ & edge of lattice length $w$ \\

connected component of $\R^2\setminus \Gamma$  & vertex \\
\hline

\end{tabular}
\end{center}
Moreover dual edges are orthogonal and inclusions are inverted.

\begin{ex}
    Figure \ref{fig:dualsubdivisions} shows the dual subdivisions of the tropical curves in Figure \ref{fig:extropcurves}.

\begin{figure}
\begin{tabular}{ccc}
\begin{tikzpicture}
\filldraw[fill=gray!10!white,thick,draw=none] (0,0)--(2,0)--(0,2)--cycle;
    \draw[step=1, gray!30] (-1,-1) grid (3,3);
    \draw[-, thick, blue] (0,0)--(2,0)--(0,2)--(0,0)node[midway, right] {};
    \draw[-, thick, blue] (0,1)--(1,1)--(1,0)--(0,1)node[midway, right] {};
    
\end{tikzpicture}
&
\begin{tikzpicture}
\filldraw[fill=gray!10!white,thick,draw=none] (0,0)--(2,0)--(0,2)--cycle;
    \draw[step=1, gray!30] (-1,-1) grid (3,3);
    \draw[-, thick, blue] (0,0)--(2,0)--(0,2)--(0,0)node[midway, right] {};
    \draw[-, thick, blue] (0,2)--(1,0)node[midway, right] {};
\end{tikzpicture}
&
\begin{tikzpicture}
\filldraw[fill=gray!10!white,thick,draw=none] (0,0)--(3,0)--(0,3)--cycle;
    \draw[step=1, gray!30] (-1,-1) grid (4,4);
    \draw[-, thick, blue] (0,0)--(3,0)--(0,3)--(0,0)node[midway, right] {};
    \draw[-, thick, blue] (0,2)--(1,2)--(0,1)--(2,1)--(2,0)node[midway, right] {};
    \draw[-, thick, blue] (0,1)--(1,0)--(2,1)node[midway, right] {};
\end{tikzpicture}
\end{tabular}
\caption{Dual subdivisions.}
\label{fig:dualsubdivisions}
\end{figure}
\end{ex}

\begin{exercise}
\label{exercise:balancing}
    Use the dual subdivision to show that plane tropical curves satisfy the \emph{balancing condition}: At every vertex $v\in \Gamma$ and every edge $e$ with vertex $v$, let $u_e\in \Z^2$ be the vector pointing away from $v$ in direction $e$ with entries coprime (the $u_e$ are drawn as arrows in Figure \ref{fig:balancingcondition}). Then for a fixed vertex $v$
    \[\sum_{v\in e}w(e)u_e=0.\] 
    \begin{figure}
        \begin{tabular}{cc}
            \begin{tikzpicture}
               % \filldraw[fill=gray!10!white,thick,draw=none] (0,0)--(3,0)--(0,3)--cycle;
    \draw[step=1, gray!30] (-2,-2) grid (2,2);
    \draw[-, thick, blue] (0,-2)--(0,0)--(1,2);
    \draw[-, thick, blue] (-2,0)--(2,0);
    \draw[->,thick,blue](0,0)--(1,0);
    \draw[->,thick,blue](0,0)--(0,-1);
    \draw[->,thick,blue](0,0)--(-1,0);
    \draw[->,thick,blue](0,0)--(1,2);
\node[below left, scale=0.8, blue] at (-1,0) {$2$};   
\node[below left, scale=0.8, blue] at (0,-1) {$2$};  
\node[below left, scale=0.8, black] at (0,0) {$v$};  
\end{tikzpicture}
     
             &  
          \begin{tikzpicture}
               \filldraw[fill=gray!10!white,thick,draw=none] (0,0)--(2,0)--(2,1)--(0,2)--(0,0);
    \draw[step=1, gray!30] (-1,-1) grid (3,3);
    \draw[-, thick, blue] (0,0)--(2,0)--(2,1)--(0,2)--(0,0);  
    \node[below left, scale=0.8, black] at (1,1) {$\Delta_v$}; 
\end{tikzpicture}
        \end{tabular}
        \caption{Balancing condition.}
        \label{fig:balancingcondition}
    \end{figure}
\end{exercise}

\subsection{Bézout's theorem for tropical curves}

We conclude this lecture by explaining an application of tropical geometry to enumerative geometry by showing Bézout's theorem for tropical curves.
This will imply Bézout's theorem for curves over $\Puiseux{k}$ where $k$ is algebraically closed and $\operatorname{char}k=0$.
Let's start with two algebraic curves $C_1$ and $C_2$ over $\Puiseux{k}$ defined by polyonimals $F_1,F_2\in \Puiseux{k}[z_1,z_2]$. 
Let $\Gamma_1$ and $\Gamma_2$ be the two tropical curves defined by the tropical polynomials $F_1^{\trop}$ and $F_2^{\trop}$, respectively.
Assume that $(\tp_1(t),\tp_2(t))\in C_1\cap C_2$ is an intersection point. Then the tropical curves $\Gamma_1$ and $\Gamma_2$ intersect in the tropicalization $p\coloneqq (-\val(\tp_1(t)),-\val(\tp_2(t)))\in \R^2$ of $(\tp_1(t),\tp_2(t))$. The converse also holds: if $\Gamma_1$ and $\Gamma_2$ intersect at a point $p \in \Gamma_1 \cap \Gamma_2 \subset \mathbb{R}^2$, then there exists $(\tilde{p}_1(t), \tilde{p}_2(t)) \in C_1 \cap C_2$ such that $p = (-\operatorname{val}(\tilde{p}_1(t)), -\operatorname{val}(\tilde{p}_2(t)))$. However, such a point of intersection in $C_1 \cap C_2$ need not be unique.

\begin{definition}
    $\Gamma_1$ and $\Gamma_2$ intersect \emph{tropically transversally} at $p$ if 
    \begin{itemize}
        \item $p$ is an \emph{isolated intersection point}, that is, there is a small open ball around $p$ such that $p$ is the only intersection point of $\Gamma_1$ and $\Gamma_2$, and
        \item $p$ is not a vertex of $\Gamma_1$ or $\Gamma_2$.
    \end{itemize}
\end{definition}

The union $\Gamma_1\cup \Gamma_2$ is the tropical curve defined by the tropical polynomial $(F_1\cdot F_2)^{\trop}=F_1^{\trop}\odot F_2^{\trop} $ and $p\in \Gamma_1\cup\Gamma_2$ is a $4$-valent vertex of $\Gamma_1\cup\Gamma_2$ in case $\Gamma_1$ and $\Gamma_2$ intersect tropically transversally at $p$. Thus dual to this $4$-valent vertex is a quadrilateral $\Delta_{p}$. In fact $\Delta_p$ is a parallelogram in the dual subdivision since two opposite edges in the dual quadrilateral must be parallel since both have to be orthogonal to the same edge of one of the curves, as illustrated in Figure \ref{fig:dualparallelogram}. 
\begin{figure}
    \begin{tabular}{cc}
    \begin{tikzpicture}
    \draw[-, thick, blue] (-2,0)--(2,0)node[midway, right] {};
    \draw[-, thick, red] (-1,-2)--(1,2)node[midway, right] {};
    \node[below right, scale=0.8, black] at (0,0) {$p$};
    \end{tikzpicture} &
\begin{tikzpicture}
\filldraw[fill=gray!10!white,thick,draw=none] (0,1)--(0,2)--(2,1)--(2,0)--cycle;
    \draw[-, thick, blue] (0,1)--(0,2)node[midway, right] {};
    \draw[-, thick, blue] (2,0)--(2,1)node[midway, right] {};
    \draw[-, thick, red] (0,2)--(2,1)node[midway, right] {};
    \draw[-, thick, red] (0,1)--(2,0)node[midway, right] {};
    \node[below right, scale=0.8, black] at (1,1) {$\Delta_p$};
    \end{tikzpicture}
    \end{tabular}
        \caption{An intersection point $p$ of two tropical curves $\Gamma_1$ and $\Gamma_2$ and its dual parallelogram $\Delta_p$.}
    \label{fig:dualparallelogram}
\end{figure}

\begin{lemma}
\label{lemma:tropicalintersectionmult}
Let $p\in \Gamma_1\cap\Gamma_2$ be an intersection point and assume that $\Gamma_1$ and $\Gamma_2$ intersect tropically transversally at $p$.
    Then the number of intersection points $(\tp_1(t),\tp_2(t))\in C_1\cap C_2$ counted with multiplicity such that $(-\val(\tp_1(t)),-\val(\tp_2(t)))=p\in \R^2$ equals $\operatorname{Area}(\Delta_{p})$.
\end{lemma}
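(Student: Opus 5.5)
Near $p$ the tropical curve $\Gamma_1$ is a single edge $e_1$ and $\Gamma_2$ is a single edge $e_2$. So I will reduce the local problem to intersecting two binomial curves over $k$ and count their solutions with a lattice-index computation.

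\textbf{Step 1: localize.} By assumption $p$ lies in the interior of an edge $e_1$ of $\Gamma_1$ and an edge $e_2$ of $\Gamma_2$. Let $(i_1,j_1),(k_1,l_1)$ be the endpoints of the dual edge of $e_1$ in the dual subdivision of $F_1^{\trop}$. Let $(i_2,j_2),(k_2,l_2)$ be those for $e_2$. Then $v_1=(k_1-i_1,l_1-j_1)$ and $v_2=(k_2-i_2,l_2-j_2)$ are lattice vectors of lattice lengths $w(e_1)$ and $w(e_2)$. The parallelogram $\Delta_p$ is spanned by $v_1$ and $v_2$, so $\Area(\Delta_p)=\vert\det(v_1,v_2)\vert$, which is nonzero because $p$ is an isolated intersection point.

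\textbf{Step 2: initial forms.} Make the monomial substitution $z_1=t^{-p_x}u_1$, $z_2=t^{-p_y}u_2$ and rescale each $F_i$ by a power of $t$ so that its minimal valuation is $0$. The points of $C_1\cap C_2$ tropicalizing to $p$ then correspond to solutions $(u_1,u_2)$ with $\val(u_1)=\val(u_2)=0$. Reducing modulo $t$ (taking the leading coefficients) gives initial forms $\operatorname{in}_p(F_1)$ and $\operatorname{in}_p(F_2)$. Only the monomials on the dual edges survive, since the lattice points on those edges are exactly the terms attaining the maximum at $p$. Factoring out a monomial, $\operatorname{in}_p(F_r)$ becomes a Laurent polynomial in the single monomial $u^{v_r/w(e_r)}$ of degree $w(e_r)$. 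Its nonzero roots are therefore the solutions of $w(e_r)$ binomial equations $u^{v_r/w(e_r)}=c$ with $c\in k^\times$, counted with multiplicity.

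\textbf{Step 3: count and lift.} For each pair of roots $c_1,c_2$, the system $u^{v_1/w_1}=c_1$, $u^{v_2/w_2}=c_2$ has exactly $\vert\det(v_1/w_1,v_2/w_2)\vert$ solutions in $(k^\times)^2$. This uses that $k$ is algebraically closed of characteristic $0$, via the Smith normal form of the $2\times 2$ exponent matrix. Summing over the pairs of roots gives $w_1w_2\vert\det(v_1/w_1,v_2/w_2)\vert=\Area(\Delta_p)$ solutions of the initial system in the torus, counted with multiplicity.

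\textbf{Main obstacle.} The hard step is showing that this torus count equals the number of points of $C_1\cap C_2$ with tropicalization $p$, counted with multiplicity. I would try a deformation argument first. Consider the family over $\operatorname{Spec} k[[t^{1/N}]]$ given by the rescaled equations, restricted to the torus part $\val(u)=0$. Its special fibre is the initial system, which is finite. The intersection multiplicity is preserved in this flat finite family, by Hensel's lemma for simple roots or by properness of the torus-part scheme in general, which holds because no solution escapes to valuation $\neq0$. As a fallback I would use a global count. Apply the same analysis at every point of $\Gamma_1\cap\Gamma_2$ and compare the sum with Bernstein's theorem, since the areas of the parallelograms add up to the mixed area. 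Each local algebraic count is at least the local torus count by lower semicontinuity, so equality of the totals forces equality at each point.
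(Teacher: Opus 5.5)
Your Steps 1--3 are correct. The multiplicity $\mu_1\mu_2$ at a solution lying over roots of multiplicities $\mu_1,\mu_2$ holds because $u\mapsto (u^{v_1/w_1},u^{v_2/w_2})$ is \'etale in characteristic $0$. The degeneration you set up is also the standard route; the notes themselves state the lemma without proof. However, the step you label the main obstacle is the entire content of the lemma, and your argument for it does not work as written.

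The scheme $X=\operatorname{Spec} k[[s]][u_1^{\pm1},u_2^{\pm1}]/(G_1,G_2)$, with $s=t^{1/N}$, is neither finite nor proper over $k[[s]]$. Its generic fibre contains every torus point of $C_1\cap C_2$, including all those tropicalizing to other points of $\Gamma_1\cap\Gamma_2$, i.e.\ solutions with $\val(u)\neq 0$. ``Restricting to $\val(u)=0$'' is not an operation on $X$. Moreover, ``no solution escapes'' is exactly the statement to be proved, so the justification is circular. You also assert flatness without argument. Hensel's lemma only covers the case where both initial forms have simple roots, and repeated roots genuinely occur: for $F_1=(z_2-1)(z_2-1-t)$, $F_2=z_1-1$ and $p=(0,0)$, the special fibre is a double point and $\Area(\Delta_p)=2$.

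Two concrete ingredients are missing.

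\textbf{Cutting out the relevant part of $X$.} You must isolate the part of $X$ that meets the special fibre. Since $k[[s]]$ is henselian and $X$ is separated of finite type with finite special fibre, Zariski's Main Theorem over a henselian base gives $X=X'\sqcup W$. Here $X'$ is finite over $k[[s]]$ and $W$ has empty special fibre. Alternatively, pass to the $s$-adic completion $k[[s]]\langle u_1^{\pm1},u_2^{\pm1}\rangle/(G_1,G_2)$, which is finite because it is finite modulo $s$. Let $K=k((s))$. By the valuative criterion, the $\overline{K}$-points of $X'_K$ are exactly the solutions with $\val(u_1)=\val(u_2)=0$. Since $X'_K$ is open in $X_K$, their multiplicities are the intersection multiplicities.

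\textbf{Flatness.} At a closed point of the special fibre, the local ring of $k[[s]][u_1^{\pm1},u_2^{\pm1}]$ is regular of dimension $3$, and $(G_1,G_2,s)$ has finite colength there. Hence $G_1,G_2,s$ is a regular sequence, so $s$ is a nonzerodivisor on $\mathcal{O}_X$. Therefore $\mathcal{O}(X')$ is free over $k[[s]]$. Its generic rank, which is the count you want, equals $\dim_k \mathcal{O}(X'_0)=\Area(\Delta_p)$.

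Your fallback does not bypass this, for three reasons. The local inequality at $p$ needs the same two ingredients. The lemma assumes transversality only at $p$, so torus counts at other points of $\Gamma_1\cap\Gamma_2$ need not exist. Finally, invoking Bernstein's theorem would make the notes' subsequent derivation of B\'ezout over $\Puiseux{k}$ from this lemma circular.
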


This motivates the following definition.
\begin{definition}
    Let $\Gamma_1$ and $\Gamma_2$ be two tropical curves and let $p\in \Gamma_1\cap \Gamma_2$. Then the \emph{intersection multiplicity} of $\Gamma_1$ and $\Gamma_2$ at $p$ is given by
    \[\mult_p(\Gamma_1,\Gamma_2)\coloneqq \operatorname{Area}(\Delta_{p})\]
    where $\Delta_{p}$ is the parallelogram dual to the $4$-valent vertex $p$ of the tropical curve $\Gamma_1\cup\Gamma_2$ in the dual subdivision $\operatorname{DS}(\Gamma_1\cup\Gamma_2)$.
\end{definition}
%Now the proof of Bézout's theorem for curves over $\Puiseux{k}$ follows from the following Bézout theorem for tropical curves and Lemma \ref{lemma:tropicalintersectionmult}.

\begin{ex}
\label{ex:intersectionoftropicallinewithconic}
Figure \ref{fig:conicandline} shows a tropical line (in blue) intersecting a tropical conic (in red) in two different ways. On the left there are two intersections and both have intersection multiplicity $1$ and on the right there is only one intersection point with intersection multiplicity $2$.

\begin{figure}
    \begin{tabular}{cccc}
    \begin{tikzpicture}[scale=0.8]
    \draw[-, thick, red] (2,3)--(0,1)--(0,0)--(1,-1)--(2,-1)--(4,1);
    \draw[-, thick, red] (0,1)--(-1,1)node[midway, right] {};
    \draw[-, thick, red] (0,0)--(-1,0)node[midway, right] {};
    \draw[-, thick, red] (1,-1)--(1,-2)node[midway, right] {};
    \draw[-, thick, red] (2,-1)--(2,-2)node[midway, right] {};
        \draw[-, thick, blue] (-1,0.5)--(1.5,0.5)--(1.5,-2)node[midway, right] {};
    \draw[-, thick, blue] (1.5,0.5)--(3,2)node[midway, right] {};

    \node[below right, scale=0.8, black] at (0,0.5) {$p$};
    \node[below right, scale=0.8, black] at (1.5,-1) {$q$};
    \end{tikzpicture} &
\begin{tikzpicture}[scale=0.8]
\filldraw[fill=gray!10!white,thick,draw=none] (0,0)--(0,3)--(3,0)--cycle;
    \draw[-, thick, red] (0,3)--(1,2)--(0,2)--(0,3)node[midway, right] {};
    \draw[-, thick, blue] (0,2)--(0,1)node[midway, right] {};
    \draw[-, thick, blue] (1,2)--(1,1)node[midway, right] {};
    \draw[-, thick, red] (0,1)--(1,1)--(1,0)--(0,0)--(0,1)node[midway, right] {};
    \draw[-, thick, red] (2,0)--(3,0)--(2,1)--(2,0)node[midway, right] {};
    \draw[-, thick, blue] (2,1)--(1,2)node[midway, right] {};
    \draw[-, thick, red] (0,0)--(1,1)node[midway, right] {};
    \draw[-, thick, blue] (2,0)--(1,0)node[midway, right] {};
    \draw[-, thick, blue] (2,1)--(1,1)node[midway, right] {};
    \node[scale=0.8, black] at (0.5,1.5) {$\Delta_p$};
    \node[scale=0.8, black] at (1.5,0.5) {$\Delta_q$};
    \end{tikzpicture}& 
    \begin{tikzpicture}[scale=0.8]
    \draw[-, thick, red] (2,3)--(0,1)--(0,0)--(1,-1)--(2,-1)--(4,1);
    \draw[-, thick, red] (0,1)--(-1,1)node[midway, right] {};
    \draw[-, thick, red] (0,0)--(-1,0)node[midway, right] {};
    \draw[-, thick, red] (1,-1)--(1,-2)node[midway, right] {};
    \draw[-, thick, red] (2,-1)--(2,-2)node[midway, right] {};
        \draw[-, thick, blue] (-1,-1)--(0,-1)--(0,-2)node[midway, right] {};
    \draw[-, thick, blue] (0,-1)--(3,2)node[midway, right] {};

    \node[below right, scale=0.8, black] at (0,-1) {$p$};
    
    \end{tikzpicture}&
\begin{tikzpicture}[scale=0.8]
\filldraw[fill=gray!10!white,draw=none] (0,0)--(0,3)--(3,0)--cycle;
    \draw[-, thick, red] (0,2)--(0,3)--(1,2)--(0,2)--(0,1)--(1,2)node[midway, right] {};
    \draw[-, thick, red] (2,0)--(1,0)--(2,1)node[midway, right] {};
    \draw[-, thick, blue] (0,1)--(1,0)--(0,0)--(0,1)node[midway, right] {};
    \draw[-, thick, red] (2,0)--(3,0)--(2,1)--(2,0)node[midway, right] {};
    \draw[-, thick, blue] (2,1)--(1,2)node[midway, right] {};

    \node[scale=0.8, black] at (1,1) {$\Delta_p$};

    \end{tikzpicture}
    \end{tabular}
        \caption{A tropical conic and a tropical line intersecting in two points with multiplicity $1$ on the left and in one point with multiplicity $2$ on the right.}
    \label{fig:conicandline}
\end{figure}

\end{ex}
Let 
\[\Delta_d\coloneqq \operatorname{Conv}\{(0,0),(d,0),(0,d)\}.\]
This is the Newton polygon of a (tropical) polyomial $\bigoplus_{0\le i+j\le d} a_{ij}\odot x^{\odot i}\odot y^{\odot j}$ of degree $d$ with coefficients $a_{ij}\neq-\infty$.
\begin{thm}[Bézout theorem for tropical curves]
\label{thm:Bezoutfortropcurves}
    Let $\Gamma_1$ and $\Gamma_2$ be two tropical curves with Newton polygons $\Delta_{d_1}$ respectively $\Delta_{d_2}$. 
    Then 
    \[\sum_{p\in \Gamma_1\cap \Gamma_2}\mult_{p}(\Gamma_1,\Gamma_2)=d_1\cdot d_2.\]
\end{thm}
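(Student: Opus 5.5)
We will use the dual subdivision of the union $\Gamma_1\cup\Gamma_2$, i.e. of the tropical curve defined by $f_1\odot f_2$, where $f_1,f_2$ are tropical polynomials defining $\Gamma_1,\Gamma_2$. Its Newton polygon is the Minkowski sum $\Delta_{d_1}+\Delta_{d_2}=\Delta_{d_1+d_2}$, of (Euclidean) area $\frac{(d_1+d_2)^2}{2}$.

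The first step is to reduce to the tropically transversal case. In general position the curves meet only in finitely many points, none of them vertices. For nontransversal configurations, translate $\Gamma_2$ by a small generic vector $v$. The natural reading of $\mult_p$ for a nontransversal $p$ is the sum of the multiplicities of the intersection points of $\Gamma_1$ and $\Gamma_2+v$ near $p$, for $v$ small. The sum in the statement is then constant in $v$, because both Newton polygons stay fixed. So it suffices to treat the case where all intersections are transversal.

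In the transversal case, every cell of $\operatorname{DS}(f_1\odot f_2)$ is one of three types:
\begin{itemize}
\item a cell dual to a vertex of $\Gamma_1$ (away from $\Gamma_2$), which is a translate of a cell of $\operatorname{DS}(f_1)$;
\item a cell dual to a vertex of $\Gamma_2$, which is a translate of a cell of $\operatorname{DS}(f_2)$;
\item a parallelogram $\Delta_p$ dual to an intersection point $p$.
\end{itemize}
This is a mixed subdivision: the tropical polynomial $f_1\odot f_2$ is a sum of the two piecewise linear functions, so its Legendre-dual lifting is the Minkowski sum of the two liftings. Hence each cell is $\sigma_1+\sigma_2$, with $\sigma_i$ a cell of $\operatorname{DS}(f_i)$ dual to the face of $\Gamma_i$ containing the point in question. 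At a point of $\Gamma_1$ not on $\Gamma_2$, $\sigma_2$ is a vertex. At an intersection point, $\sigma_1$ and $\sigma_2$ are the two dual edges, and $\Delta_p=e_1+e_2$.

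Next we count areas. The cells of the first type are translates of the cells of $\operatorname{DS}(f_1)$, each appearing exactly once, since each vertex of $\Gamma_1$ appears once. They therefore contribute $\Area(\Delta_{d_1})=\frac{d_1^2}{2}$, and likewise the second type contributes $\frac{d_2^2}{2}$. Since the cells tile $\Delta_{d_1+d_2}$,
\[\frac{(d_1+d_2)^2}{2}=\frac{d_1^2}{2}+\frac{d_2^2}{2}+\sum_p\Area(\Delta_p),\]
which gives $\sum_p\Area(\Delta_p)=d_1d_2$. Here $\Area$ must be the Euclidean area, so that a unit lattice parallelogram has area $1$; this agrees with Example \ref{ex:intersectionoftropicallinewithconic}.

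The main obstacle is justifying that the dual subdivision of the product is exactly this mixed subdivision, with each cell of $\operatorname{DS}(f_i)$ occurring exactly once as a translate. I would argue through duality: a cell dual to a region $R$ of $\R^2$ where both $f_1,f_2$ are affine on the open faces is the sum of the sets of maximizing monomials, i.e. of $\sigma_1+\sigma_2$. The vertices of $\Gamma_1\cup\Gamma_2$ are exactly the vertices of each $\Gamma_i$ together with the intersection points, which gives a bijection between the cells and the three types above.

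A secondary obstacle is the continuity of the sum under perturbation. This reduces to the fact that stable intersection is well defined; since we have adopted that as the definition of $\mult_p$ in the nontransversal case, nothing further is needed.
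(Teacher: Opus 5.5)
Your proof is correct and follows the paper's argument: the dual subdivision of $\Gamma_1\cup\Gamma_2$ tiles $\Delta_{d_1+d_2}$ by translates of the cells of $\operatorname{DS}(\Gamma_1)$ and $\operatorname{DS}(\Gamma_2)$ together with the parallelograms $\Delta_p$, so $\sum_p\Area(\Delta_p)=\Area(\Delta_{d_1+d_2})-\Area(\Delta_{d_1})-\Area(\Delta_{d_2})=d_1d_2$. Your mixed-subdivision justification fills in a step the paper only illustrates with a figure; the perturbation reduction goes beyond the paper's statement, which defines $\mult_p$ only via a parallelogram dual to a $4$-valent vertex and so tacitly assumes the intersections are tropically transversal.
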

\begin{proof}
The tropical curve $\Gamma_1\cup \Gamma_2$ has Newton polygon $\Delta_{d_1+d_2}$. The dual subdivision of $\Gamma_1\cup \Gamma_2$ consists of the dual subdivision of $\Gamma_1$, $\Gamma_2$ and the parallelograms corresponding to the intersection points as indicated by the different colors in Figure \ref{fig:twotropcurvesintersecting}. Thus
    \begin{align*}
        \sum_{p\in \Gamma_1\cap \Gamma_2}\mult_{p}(\Gamma_1,\Gamma_2)&=\Area(\Delta_{d_1+d_2})-\Area(\Delta_{d_1})-\Area(\Delta_{d_2})\\
        &=\frac{(d_1+d_2)^2}{2}-\frac{d_1^2}{2}-\frac{d_2^2}{2}=d_1\cdot d_2
    \end{align*}

\begin{figure}
    \begin{tabular}{cc}
    \begin{tikzpicture}[scale=0.8]
    \draw[-, thick, red] (-1,0)--(0,0)--(1,1)--(1,2)--(3,4);
    \draw[-, thick, red] (0,0)--(0,-1);
    \draw[-, thick, red] (1,1)--(2,1)--(2,-1);
    \draw[-, thick, red] (2,1)--(4,3);
    \draw[-, thick, red] (1,2)--(-1,2);
        \draw[-, thick, blue] (2,5)--(0,3)--(0,1)--(1,0)--(3,0)--(5,2);
        \draw[-, thick, blue] (0,3)--(-1,3);
        \draw[-, thick, blue] (0,1)--(-1,1);
        \draw[-, thick, blue] (1,0)--(1,-1);
        \draw[-, thick, blue] (3,0)--(3,-1);
    
    %\node[below right, scale=0.8, black] at (0,0.5) {$p$};
    %\node[below right, scale=0.8, black] at (1.5,-1) {$q$};
    \end{tikzpicture} &
\begin{tikzpicture}[scale=0.8]
\filldraw[fill=gray!10!white,draw=none] (0,2)--(0,3)--(1,3)--(1,2)--(0,2)--cycle;
\filldraw[fill=gray!10!white,draw=none] (2,0)--(3,0)--(3,1)--(2,1)--(2,0)--cycle;
\filldraw[fill=gray!10!white,draw=none] (0,1)--(1,0)--(2,1)--(1,2)--(0,1)--cycle;
    \filldraw[fill=blue!10!white, thick,draw=blue] (0,4)--(1,3)--(0,3)--(0,4);
    \filldraw[fill=blue!10!white, thick,draw=blue] (0,2)--(0,1)--(1,2)--(0,2);
    \filldraw[fill=blue!10!white, thick,draw=blue] (1,0)--(2,0)--(2,1)--(1,0);
    \filldraw[fill=blue!10!white, thick,draw=blue] (3,0)--(4,0)--(3,1)--(3,0);
    \filldraw[fill=red!10!white, thick,draw=red] (1,3)--(3,1)--(2,1)--(1,2)--(1,3);
    \filldraw[fill=red!10!white, thick,draw=red] (0,0)--(1,0)--(0,1)--(0,0);
    \draw[-, thick,draw=red] (1,2)--(2,2)--(2,1);
    \draw[-, thick,draw=red] (2,0)--(3,0);
    \draw[-, thick,draw=red] (0,2)--(0,3);
    %\filldraw[fill=red!10!white, thick,draw=red] (1,0)--(2,0)--(2,1)--(1,0);
    %\filldraw[fill=red!10!white, thick,draw=red] (3,0)--(4,0)--(3,1)--(3,0);
    %\node[scale=0.8, black] at (0.5,1.5) {$\Delta_p$};
    %\node[scale=0.8, black] at (1.5,0.5) {$\Delta_q$};
    \end{tikzpicture}
    \end{tabular}
        \caption{Two tropical degree $2$ curves intersecting with the dual subdivision of their union.}
    \label{fig:twotropcurvesintersecting}
\end{figure}
\end{proof}
Now together with Lemma \ref{lemma:tropicalintersectionmult} this implies
\begin{cor}[Bézout over $\Puiseux{k}$]
    Let $C_1$ and $C_2$ be two algebraic curves in $\P^2_{\Puiseux{k}}$ of degree $d_1$ respectively $d_2$. Then
    \[\sum_{\tp\in C_1\cap C_2}\mult_{\tp}(C_1,C_2)=d_1\cdot d_2.\]
\end{cor}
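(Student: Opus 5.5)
The plan is to reduce the projective statement to the tropical Bézout theorem (Theorem \ref{thm:Bezoutfortropcurves}) via Lemma \ref{lemma:tropicalintersectionmult}. Throughout we work with $k$ algebraically closed of characteristic $0$, as in the surrounding discussion. We may assume $C_1$ and $C_2$ have no common component; otherwise the left-hand side is not a finite sum.

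\textbf{Step 1: reduce to a generic affine chart.} Write $C_i=V(G_i)$ with $G_i$ homogeneous of degree $d_i$. Apply a projective linear change of coordinates $g\in \mathrm{PGL}_3(\Puiseux{k})$. This does not change the intersection multiplicities or the degrees. We choose $g$ so that three things hold:
\begin{itemize}
\item all intersection points lie in the torus $\{z_0z_1z_2\neq 0\}$;
\item the dehomogenizations $F_i(z_1,z_2)=G_i(1,z_1,z_2)$ have all coefficients $\ta_{ij}\neq 0$ for $0\le i+j\le d_i$, so that $\operatorname{NP}(F_i^{\trop})=\Delta_{d_i}$;
\item the tropical curves $\Gamma_i=V^{\trop}(F_i^{\trop})$ intersect tropically transversally at every point of $\Gamma_1\cap\Gamma_2$.
\end{itemize}
The first two conditions are Zariski-open and nonempty. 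For the third, I would apply a further torus rescaling $(z_1,z_2)\mapsto(t^{\alpha}z_1,t^{\beta}z_2)$ with $\alpha,\beta\in\Q$. On the tropical side this translates $\Gamma_1$ by $(-\alpha,-\beta)$ relative to $\Gamma_2$. Both tropical curves are finite polyhedral complexes, so a generic rational translation makes them meet only in finitely many points, none of which is a vertex of either curve. This translation also keeps the intersection in the torus and keeps all coefficients nonzero.

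\textbf{Step 2: group the intersection points by tropicalization.} Every $\tp\in C_1\cap C_2$ lies in the torus, so its tropicalization is a point $p\in\R^2$. By Kapranov's theorem, $p$ lies in $\Gamma_1\cap\Gamma_2$. Hence
\[
\sum_{\tp\in C_1\cap C_2}\mult_{\tp}(C_1,C_2)=\sum_{p\in\Gamma_1\cap\Gamma_2}\ \sum_{\tp\mapsto p}\mult_{\tp}(C_1,C_2).
\]

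\textbf{Step 3: compute the two sums.} By Lemma \ref{lemma:tropicalintersectionmult}, each inner sum equals $\Area(\Delta_p)=\mult_p(\Gamma_1,\Gamma_2)$. By Theorem \ref{thm:Bezoutfortropcurves}, the outer sum of these multiplicities is $d_1d_2$.

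\textbf{Main obstacle.} The hard step is Step 1: showing that the genericity conditions can be achieved simultaneously while keeping the algebraic multiplicities unchanged. The multiplicities are intrinsic to the intersection of $C_1$ and $C_2$, so projective changes of coordinates preserve them. The real care is needed for the tropical transversality, which I would settle by the translation argument above. I would also check that no intersection point escapes to the coordinate axes. This holds because a generic $g$ moves the finitely many intersection points off the three coordinate lines.
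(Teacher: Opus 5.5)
\textbf{Where the proposal breaks: the third bullet of Step 1.} Your Steps 2 and 3 are exactly the paper's argument. The paper deduces the corollary in one line from Lemma \ref{lemma:tropicalintersectionmult} and Theorem \ref{thm:Bezoutfortropcurves}. In doing so it tacitly assumes the generic situation: all intersection points lie in the torus, $\operatorname{NP}(F_i^{\trop})=\Delta_{d_i}$, and $\Gamma_1,\Gamma_2$ meet tropically transversally everywhere.

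Step 1 is meant to reduce an arbitrary pair $C_1,C_2$ to that situation. The first two bullets are fine, since they are nonempty Zariski-open conditions on $g$. The mechanism you propose for the third does not work. The substitution $(z_1,z_2)\mapsto(t^{\alpha}z_1,t^{\beta}z_2)$ is a change of coordinates only if you apply it to both $F_1$ and $F_2$. It then translates $\Gamma_1$ and $\Gamma_2$ by the \emph{same} vector, so their relative position is unchanged, including any shared edge or intersection at a vertex.

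To move $\Gamma_1$ relative to $\Gamma_2$ you must apply the substitution to $F_1$ alone. That means replacing $C_1$ by $\lambda C_1$ for the torus element $\lambda=(t^{\alpha},t^{\beta})$, which is a different curve. The set $\lambda C_1\cap C_2$ consists of different points, and its multiplicities are unrelated to those of $C_1\cap C_2$. Your first bullet also no longer applies to the new points. So the argument as written proves $\sum_{\tp\in\lambda C_1\cap C_2}\mult_{\tp}(\lambda C_1,C_2)=d_1d_2$, not the statement for $C_1,C_2$.

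\textbf{What would close the gap.} You need a genuinely additional input; either option below works.

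\emph{Conservation of number.} Consider $\lambda$ in the connected open subset of the torus over $\Puiseux{k}$ where $\lambda C_1$ and $C_2$ share no component. On this set the total intersection length of $\lambda C_1$ and $C_2$ is constant, because the family of complete intersections in $\P^2$ is flat. You then check that some $\lambda=(c_1t^{\alpha},c_2t^{\beta})$, with $(\alpha,\beta)$ generic and $c_i\in k^\times$, lies in this open set and puts the pair in the generic situation. Be aware that conservation of number plus any single example already gives classical Bézout, so the tropical computation would then only evaluate the constant.

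\emph{Non-transverse tropical intersections.} You could avoid moving the curves altogether. Instead, replace Lemma \ref{lemma:tropicalintersectionmult} by a statement for non-transverse tropical intersections, namely stable intersection multiplicities on the connected components of $\Gamma_1\cap\Gamma_2$. This is substantially harder.

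Without one of these, your argument, like the paper's one-line deduction, proves the corollary only under the genericity hypotheses.
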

\begin{rmk}
    This also works for tropical curves with Newton polygons other than $\Delta_d$. In this case 
    $\sum_{p\in \Gamma_1\cap \Gamma_2}\mult_{p}(\Gamma_1,\Gamma_2)$ equals the \emph{mixed volume} of the two Newton polygons. This again implies the analogous algebraic statement over $\Puiseux{k}$ known as the Bernstein-Kushnirenko theorem. All of this also works in higher dimensions and dimension $1$, that is not only for (tropical) curves but also for (tropical) hypersurfaces.
\end{rmk}

\section{Tropical enumerative geometry}

In the last lecture, we prove Bézout's theorem for tropical curves with 
multiplicities in $\GW(k)$
\cite{JaramilloPuentesPauliBezout}. 
To achieve this, we introduce the notion of \emph{enriched tropical curves} over $k$. 
During the tropicalization process, too much information is lost to define an enriched intersection multiplicities valued in $\GW(k)$; enriched tropical curves retain 
precisely the additional data needed to recover this information. 
This leads to a purely combinatorial proof of Bézout's theorem for tropical curves 
with multiplicities in $\GW(k)$.

We finish this lecture series with a very powerful application of tropical geometry in enumerative geometry. 
The major breakthrough in tropical enumerative geometry is Mikhalkin's 
\emph{tropical correspondence theorem} \cite{Mikhalkin}, 
which states that the count of plane degree~$d$, genus~$g$ curves passing through 
a given configuration of points equals the weighted count of tropical plane curves of the same 
degree and genus with point conditions, where the tropical curves are counted 
with appropriate multiplicities. 
There are also tropical correspondence theorems for counting real plane rational degree~$d$ curves with 
point conditions \cite{Mikhalkin,Shustin}. 
Very recent work 
\cite{JaramilloPuentesPauliCorrespondence,JPMPRnew} extends this to the 
version over an (almost) arbitrary field $k$, establishing a tropical correspondence theorem in this 
refined setting.

\subsection{Bézout for tropical curves in $\GW(k)$}
In the last lecture we will bring together the two worlds of tropical geometry and enumerative geometry in $\GW(k)$.
To avoid non-separable field extensions and zeros of our polynomials not contained in $\Puiseux{\overline{k}}$, we assume that the characteristic of $k$ is either $0$ or big, that is larger than the degrees of the polynomials involved.
Let $\Puiseux{k}$ be the field of Puiseux series over $k$.

The exercise below demonstrates the canonical isomorphism $\GW(k) \cong \GW(\Puiseux{k})$, offering a natural justification for weighted counts with weights in $\GW(k)$ are compatible with the framework of tropical geometry.

 \begin{exercise}
 \label{exercise:GWPuiseux}
\begin{enumerate}
    \item There is a bijection
     \[\nicefrac{\Puiseux{k}^\times}{(\Puiseux{k}^\times)^2}\cong \nicefrac{k^\times}{(k^\times)^2}, \; \ta(t)=a_0t^{q_0}+h.o.t.\mapsto a_0\]
     \item Conclude that $\GW(\Puiseux{k})\rightarrow \GW(k)$ defined by $\langle a_0t^{q_0}+h.o.t.\rangle\mapsto \langle a_0\rangle$ is an isomorphism by checking that this map respects the relations in the Grothendieck-Witt rings.
\end{enumerate}
 \end{exercise}

We have the following Bézout theorem for curves over $\Puiseux{k}$ (see Example \ref{ex:bezoutoveranyfield}) which we will reprove using tropical geometry.

\begin{thm}
\label{thm:bezoutforcurvesoverpuiseux}
    Let $C_1$ and $C_2$ be curves in $\P^2_{\Puiseux{k}}$ of degree $d_1$ respectively $d_2$ with $d_1+d_2$ odd which meet transversally and with all intersection points contained in some $\A^2_{\Puiseux{k}}$ where $k$ is a perfect field of characteristic larger than $d_1+d_2$. Then as subsets of $\A^2_{\Puiseux{k}}$ one can write $C_1=V(F_1)$ and $C_2=V(F_2)$ with $F_1,F_2\in \Puiseux{k}[z_1,z_2]$. It then holds that 
    \[\sum_{\tp\in C_1\cap C_2}\Tr_{\kappa(\tp)/\Puiseux{k}}\langle \det \operatorname{Jac} (F_1,F_2)(\tp)\rangle=\frac{d_1\cdot d_2}{2}h\in \GW(\Puiseux{k})\overset{\ref{exercise:GWPuiseux}}{\cong} \GW(k)\]
    where $\kappa(\tp)$ is the residue field of $\tp$.
\end{thm}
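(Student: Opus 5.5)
The plan is to mimic the proof of Theorem~\ref{thm:Bezoutfortropcurves}, but with every quantity enriched to $\GW(k)$ via Exercise~\ref{exercise:GWPuiseux}. First I group the intersection points $\tp\in C_1\cap C_2$ according to their tropicalization $p\in\Gamma_1\cap\Gamma_2$. Since the statement is invariant under small deformations of the coefficients (by Theorem~\ref{thm:A1PHthm} the sum is the Euler number $n^{\A^1}(V)$, independent of the section), I may assume the coefficients of $F_1,F_2$ are generic. Then $\Gamma_1,\Gamma_2$ intersect tropically transversally, the dual subdivisions are unimodular triangulations, and the parallelograms $\Delta_p$ appear as in the proof of Theorem~\ref{thm:Bezoutfortropcurves}.

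\textbf{Step 1 (local computation).} Fix such a $p$. Near $p$, only the monomials of $F_1$ dual to the edge of $\Gamma_1$ through $p$ matter, and likewise for $F_2$. After the substitution $z_i=t^{-p_i}w_i$ and passing to leading coefficients, $F_1$ and $F_2$ reduce to binomial-type polynomials over $k$. Write them as
\[
a\,w^{\alpha}+b\,w^{\alpha+w_1u_1}, \qquad c\,w^{\beta}+d\,w^{\beta+w_2u_2},
\]
where $u_1,u_2$ are the primitive edge directions of the dual subdivision and $w_1,w_2$ are the weights. The intersection points over $p$ lift, by a Hensel/Newton-polygon argument, from the solutions in $(k^\times)^2$ of the system $w^{w_1u_1}=-a/b$, $w^{w_2u_2}=-c/d$. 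This system has exactly $|\det(w_1u_1,w_2u_2)|=\Area(\Delta_p)$ solutions, which recovers Lemma~\ref{lemma:tropicalintersectionmult}. The Jacobian determinant at a lift has leading coefficient equal to the Jacobian of the binomial system at the solution, times a monomial in $t$. So by Exercise~\ref{exercise:GWPuiseux}, the contribution of $p$ equals the $\A^1$-degree of a monomial map $(k^\times)^2\to(k^\times)^2$ given by the integer matrix with rows $w_1u_1,w_2u_2$, twisted by the unit values. The characteristic assumption guarantees that the relevant field extensions are separable, so Proposition~\ref{prop:localA1degree} and traces apply.

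\textbf{Step 2 (evaluate monomial degrees).} I compute the sum of $\Tr\langle\det\operatorname{Jac}\rangle$ over the solutions of $w^{M}=\gamma$, where $M$ is a $2\times 2$ integer matrix. Using the Smith normal form, this reduces to one-variable maps $w\mapsto w^n$. The degree of $w^n=\gamma$ is $\frac n2 h$ if $n$ is even, and $\frac{n-1}2h+\langle n\gamma'\rangle$ if $n$ is odd, for an explicit unit $\gamma'$. So the enriched multiplicity of $p$ is $\frac{\Area(\Delta_p)}2h$ when $\Area(\Delta_p)$ is even. When $\Area(\Delta_p)$ is odd it is $\frac{\Area(\Delta_p)-1}2h+\langle\epsilon_p\rangle$, where $\epsilon_p\in k^\times$ is a sign times a product of leading coefficients determined by the combinatorial data (edge directions, vertices of $\Delta_p$, and the coefficients $a,b,c,d$). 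This $\epsilon_p$ is exactly the information an \emph{enriched tropical curve} should remember.

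\textbf{Step 3 (global cancellation, main obstacle).} After Step 2, the total is $\frac{d_1d_2-N}{2}h+\sum_{\Area(\Delta_p)\text{ odd}}\langle\epsilon_p\rangle$, where $N$ is the number of odd-area parallelograms. Since $d_1d_2$ is even (because $d_1+d_2$ is odd), $N$ is even. It remains to show that the odd terms pair off as $\langle\epsilon\rangle+\langle-\epsilon\rangle=h$, using Exercise~\ref{exercise:hyperbolicform}. I would attempt this by a combinatorial argument on $\Delta_{d_1+d_2}$. Orient the curves using the parity condition and track how $\epsilon_p$ changes along edges of $\Gamma_1$: moving across a vertex of $\Gamma_1$ changes the relevant coefficient ratio in a controlled way. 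The goal is to show that consecutive odd intersection points along a connected path of $\Gamma_1$ (ordered e.g. by a lattice-parity argument on the vertices of $\Delta_p$) carry opposite classes.

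A fallback is to first move, by a tropical-homotopy argument, to a degenerate configuration in which $\Gamma_1$ is a union of $d_1$ lines and $\Gamma_2$ a union of $d_2$ lines, and verify directly that the enriched total is invariant under such moves. This invariance is the delicate part; in this configuration the pairing is explicit, since for $d_1$ even each line of $\Gamma_2$ meets $\Gamma_1$ in pairs with opposite Jacobian signs, as in the $n=1$ real picture of Example~\ref{ex:bezoutreal}.
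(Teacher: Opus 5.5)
Your reduction to tropically transverse position and your Step~1 are reasonable. The proof breaks down in Step~2, which contains a false claim, and in Step~3, which is left as a plan even though it carries the whole content of the theorem.

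\textbf{Step 2: the even-area claim is false.} By your own Step~1, the leading coefficient of the Jacobian at a solution is $ac\det(M)\,w^{\alpha+\beta-(1,1)}$. The square class of the monomial $w^{\alpha+\beta-(1,1)}$ depends on the parity of $\alpha+\beta$, i.e.\ on where $\Delta_p$ sits in $\Z^2$. Your one-variable formula is the degree of $u\mapsto u^n$, whose Jacobian $nu^{n-1}$ has a fixed parity. After the monomial change of coordinates, the form you actually have to trace is $\langle Cu^e\rangle$, where $e$ can have either parity. For $n$ even and $e$ even one gets $\Tr_{k[u]/(u^n-\gamma)/k}\langle c\rangle=\langle cn\rangle+\langle cn\gamma\rangle+(\frac{n}{2}-1)h$, not $\frac{n}{2}h$.

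Here is a concrete counterexample. Take initial forms $az_1z_2+bz_1^2z_2$ and $c+dz_1z_2^2$. The directions $(1,0)$ and $(1,2)$ are primitive, so this occurs even with unimodular subdivisions, and $\Area(\Delta_p)=2$.
\begin{itemize}
\item The two solutions have $z_1=-a/b$ and $z_2^2=bc/(ad)$.
\item The Jacobian equals $2ac$ at both of them.
\item The contribution of $p$ is $\langle 2ac\rangle(\langle 2\rangle+\langle 2abcd\rangle)=\langle ac\rangle+\langle bd\rangle$.
\item For $a=b=c=d=1$ over $\Q$ this has signature $2$, so it is not $h$.
\end{itemize}
This is exactly what Theorem~\ref{thm:enrichedintersectionmult} predicts. $\Delta_p$ has vertices $(1,1),(2,1),(2,3),(3,3)$, and the odd ones, $(1,1)$ and $(3,3)$, carry coefficients $ac$ and $bd$ with equal signs, being opposite vertices. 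In general the non-hyperbolic part of $\mult_p^{\A^1}$ is $\sum\langle\epsilon_p(v)a_v\rangle$ over the odd vertices $v$ of $\Delta_p$. There is exactly one such vertex when the area is odd, but $0$, $2$ or $4$ when it is even.

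\textbf{Step 3: no cancellation mechanism.} Because of the error above, the total you write down at the start of Step~3 is not the actual sum, and there is no reason for the odd-area terms to cancel among themselves. More importantly, Step~3 supplies no argument. Ordering intersection points along paths of $\Gamma_1$ is only a hope. The fallback needs exactly the invariance you call delicate, and the asserted pairing for unions of lines is itself a special case of the statement.

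\textbf{The missing idea.} This is how the paper proves Corollary~\ref{cor:tropicalenrichedbezout}, from which the theorem follows with Theorem~\ref{thm:enrichedintersectionmult}. Index the non-hyperbolic terms by odd lattice points rather than by intersection points: each is $\langle\epsilon_p(v)a_v\rangle$, where $a_v$ is the initial coefficient of $F_1F_2$ at $v$.
\begin{itemize}
\item Since $d_1+d_2$ is odd, $\partial\Delta_{d_1+d_2}$ contains no odd lattice point, so every odd $v$ is interior.
\item Going once around $v$ in $\operatorname{DS}(\Gamma_1\cup\Gamma_2)$, the colour changes exactly at the parallelograms having $v$ as a vertex, and the direction of each change is $\epsilon_p(v)$.
\item Hence the signs $\epsilon_p(v)$ alternate, and the terms with the same $a_v$ pair to $\langle a_v\rangle+\langle -a_v\rangle=h$.
\end{itemize}
Equivalently, the pairing runs along the boundary of the bounded component of $\R^2\setminus(\Gamma_1\cup\Gamma_2)$ dual to $v$, not along $\Gamma_1$. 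It may pair a term from an odd-area parallelogram with one from an even-area parallelogram, which is a term your Step~2 discarded. The sum therefore vanishes in $\operatorname{W}(k)$. Its rank is $d_1d_2$ by Theorem~\ref{thm:Bezoutfortropcurves}, so Remark~\ref{rmk:wittandrankdeterminegw} gives $\frac{d_1d_2}{2}h$.
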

 Let $\Gamma_1$ and $\Gamma_2$ be the two tropical curves defined by $F_1^{\trop}$ and $F_2^{\trop}$.
To prove Theorem \ref{thm:bezoutforcurvesoverpuiseux} we need an analog of Lemma \ref{lemma:tropicalintersectionmult} yielding a tropical intersection multiplicity valued in $\GW(k)\cong \GW(\Puiseux{k})$.

However, by passing from $C_i$ to $\Gamma_i$ for $i=1,2$ we lose too much information when $k$ is not algebraically closed.
 We need to ``enrich" our tropical curves. Exercise \ref{exercise:GWPuiseux} indicates that we only need to remember the initial coefficients of the coefficients of $F_1$ and $F_2$ up to squares in $k^\times$.

%From this exercise one can deduce the following Lemma by checking that the relations in the Grothendieck-Witt rings are respected.
%\begin{lemma}
%$\GW(\Puiseux{k})\rightarrow \GW(k)$ defined by $\langle a(t)\rangle\mapsto a_0$ where $a(t)=a_0t^{n_0}+h.o.t.$ defines an isomorphism of Grothendieck-Witt rings.
%\end{lemma}

Let $F(z_1,z_2)=\sum \ta_{ij}(t)z_1^i z_2^j\in \Puiseux{k}[z_1,z_2]$.
Recall that the connected components of the complement of a tropical curve correspond to the tropical monomials in the tropical polynomial $F^{\trop}$ where the maximum is attained, and these monomials correspond to monomials of $F$. We want to remember the coefficients $\ta_{ij}(t)=a_{ij}t^{\val(\ta_{ij})}+h.o.t.$ in $F$ up to squares in $\Puiseux{k}^\times$ which by the above exercise is equal to the class $[a_{ij}]$ of $a_{ij}$ in $k^\times/(k^\times)^2$ and we do this by decorating the components of the complement of the associated tropical curve with $a_{ij}$. Recall that the components of the complement of the tropical curve correspond to the vertices in the dual subdivision, so we also label these as illustrated in Figure \ref{figure:enriched tropical curve} which leads to the following definition. 
\begin{figure}
\begin{tabular}{cc}
%\begin{tikzpicture}[scale=0.8]
%\draw[blue,thick] (-3,0)--(0,0)--(0,-3);
%\draw[blue,thick] (0,0)--(2,2);
%\node[blue] at (2,-0.5) {$a_{10}$};
%\node[blue] at (-0.5,2) {$a_{01}$};
%\node[blue] at (-1,-1) {$a_{00}$};

%\end{tikzpicture}&
\begin{tikzpicture}[scale=0.6]
\draw[blue,thick] (-1,2)--(2,2)--(2,-1);
\draw[blue,thick] (2,2)--(3,3)--(4,3)--(4,-1);
\draw[blue,thick] (4,3)--(6.5,5.5);
\draw[blue,thick] (3,3)--(3,4)--(5.5,6.5);
\draw[blue,thick] (3,4)--(-1,4);
\node[blue] at (3,0) {$[a_{10}]$};
\node[blue] at (0,3) {$[a_{01}]$};
\node[blue] at (0,0) {$[a_{00}]$};
\node[blue] at (5,1) {$[a_{20}]$};
\node[blue] at (1,5) {$[a_{02}]$};
\node[blue] at (5,5) {$[a_{11}]$};
\end{tikzpicture}&
\begin{tikzpicture}[scale=0.75]
\filldraw[fill=gray!10!white,thick,draw=none] (0,0)--(4,0)--(0,4)--cycle;
\draw[blue,thick] (0,0)--(4,0)--(0,4)--cycle;
\draw[blue,thick] (2,0)--(0,2)--(2,2)--cycle;
\node[blue] at (2,-0.4) {$[a_{10}]$};
\node[blue] at (-0.45,2) {$[a_{01}]$};
\node[blue] at (-0.3,-0.3) {$[a_{00}]$};
\node[blue] at (4.35,-0.3) {$[a_{20}]$};
\node[blue] at (-0.3,4.35) {$[a_{02}]$};
\node[blue] at (2.35,2.35) {$[a_{11}]$};
\end{tikzpicture} 
\end{tabular}
\caption{An enriched tropical curve and its enriched dual subdivision.}
\label{figure:enriched tropical curve}
\end{figure}

\begin{definition}
    An \emph{enriched tropical curve} over $k$ consists of a tropical curve $\Gamma$ together with $[a_{ij}]\in k^\times/(k^\times)^2$ for each component of $\R^2\setminus \Gamma$ or equivalently vertex $(i,j)$ in the dual subdivision of $\Gamma$. We call the $[a_{ij}]$ \emph{coefficients} at the component/vertex.
\end{definition}

%{\color{red} add Nathan's phase structures as a generalization of this}

      Given two enriched tropical curves we also want to equip the union of the two underlying tropical curves with an enrichment. We do this in the following way.
The coefficient of a component of the union is given by the product of the coefficients of the respective components of the two tropical curves, as shown for the example of the union of two enriched tropical curves in Figure \ref{fig:unionofenrichedcurves}.
This is motivated by the following observation. 
    Let $F_1, F_2 \in \Puiseux{k}[z_1, z_2]$.  
    Both $F_1$ and $F_2$ give rise to an enriched tropical curve $\Gamma_1$ respectively $\Gamma_2$ with coefficients $[a_{ij}]$ respectively $[b_{ij}]$, obtained from the initials of the coefficients of $F_1$ respectively $F_2$. 
    The product $F_1 \cdot F_2$ also determines an enriched tropical curve whose underlying (non-enriched) tropical curve is the union $\Gamma_1 \cup \Gamma_2$, and whose coefficients are exactly those obtained from $[a_{ij}]$ and $[b_{ij}]$ in this manner (see \cite[Lemma 3.8]{JaramilloPuentesPauliBezout}).

\begin{figure}
    \begin{tabular}{ccc}
    \begin{tikzpicture}[scale=2]
\draw[blue,thick] (-1,0)--(0,0)--(0,-1);
\draw[blue,thick] (0,0)--(1,1);
\node[blue] at (1,0) {$[a_{10}]$};
\node[blue] at (0,1) {$[a_{01}]$};
\node[blue] at (-0.5,-0.5) {$[a_{00}]$};

    \end{tikzpicture}&
        \begin{tikzpicture}[scale=2]
\draw[red,thick] (-1,0)--(0,0)--(0,-1);
\draw[red,thick] (0,0)--(1,1);
\node[red] at (1,0) {$[b_{10}]$};
\node[red] at (0,1) {$[b_{01}]$};
\node[red] at (-0.5,-0.5) {$[b_{00}]$};

    \end{tikzpicture}&
     \begin{tikzpicture}[scale=2.6]
     \draw[blue,thick] (-1.5,0.5)--(-0.5,0.5)--(-0.5,-1);
\draw[blue,thick] (-0.5,0.5)--(0.5,1.5);
\draw[red,thick] (-1.5,0)--(0,0)--(0,-1);
\draw[red,thick] (0,0)--(1,1);
\node[purple] at (0.5,0) {$[a_{10}b_{10}]$};
\node[purple] at (-1,1) {$[a_{01}b_{01}]$};
\node[purple] at (-1,-0.5) {$[a_{00}b_{00}]$};
\node[purple] at (-0.1,0.5) {$[a_{10}b_{01}]$};
\node[purple] at (-1,0.25) {$[a_{00}b_{01}]$};
\node[purple] at (-0.24,-0.5) {$[a_{10}b_{00}]$};

    \end{tikzpicture}
    \end{tabular}
    \caption{The union of two enriched tropical curves.}
    \label{fig:unionofenrichedcurves}
\end{figure}

\begin{definition}[Tropical $\A^1$-intersection multiplicity]
\label{def:enrichedtropintmult}
Let $F_1,F_2\in \Puiseux{k}[z_1,z_2]$ 
and let $\Gamma_1$ and $\Gamma_2$ be the two associated enriched tropical curves. Assume $\Gamma_1$ and $\Gamma_2$ intersect tropically transversally at $p$.
We define the \emph{tropical $\A^1$-intersection multiplicity} of $\Gamma_1$ and $\Gamma_2$ at $p$ to be
    \[\mult^{\A^1}_p(\Gamma_1,\Gamma_2)\coloneqq  \sum_{\tp}\operatorname{Tr}_{\kappa(\tp)/\Puiseux{k}} (\langle\det \operatorname{Jac}(F_1,F_2)(\tp)\rangle)\in \GW(\Puiseux{k})\]
        where the sum runs over the common zeros $\tp$ of $F_1$ and $F_2$, which tropicalize to $p$. Here, $\kappa(\tp)$ denotes the residue field of $\tp$.
\end{definition} 

We say that a lattice point $v\in \Z^2$ is \emph{odd} if both entries are odd.
The following theorem is the generalization of Lemma \ref{lemma:tropicalintersectionmult}.
\begin{thm}[Jaramillo Puentes-Pauli \cite{JaramilloPuentesPauliBezout}]
\label{thm:enrichedintersectionmult}
Let $\Gamma_1$ and $\Gamma_2$ be enriched tropical curves of degree $d_1$ respectively $d_2$.
Assume that $k$ is a field of characteristic $0$ or greater than the sum of the degrees of $\Gamma_1$ and $\Gamma_2$.
     Let $\Delta_p$ be the parallelogram in $\operatorname{DS}(\Gamma_1\cup \Gamma_2)$ dual to $p\in \Gamma_1\cap \Gamma_2$. Then
    \[\operatorname{mult}^{\A^1}_p(\Gamma_1,\Gamma_2)=\frac{\operatorname{Area}(\Delta_p)-\#\{\text{$v$ odd vertex of $\Delta_p$}\}}{2}\cdot h+\sum_{\text{$v$ odd vertex of $\Delta_p$}}\langle \epsilon_p(v)a_v\rangle\]
    where
    $[a_v]\in k^\times/(k^\times)^2$ is the coefficient of the vertex $v$ and
    \[\epsilon_p(v)=\begin{cases}+1 &\text{if one starts at an edge dual to an edge of $\Gamma_1$} \\
    -1 &\text{if one starts at an edge dual to an edge of $\Gamma_2$} 
    \end{cases}\]
    when walking around $v$ inside of $\Delta_p$ anticlockwise as illustrated in Figure \ref{fig:epsilons}.
\end{thm}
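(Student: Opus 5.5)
The plan is to reduce to the lowest-order terms and then compute explicitly. Since $p$ is a tropically transversal intersection, near $p$ only the two monomials of $F_1$ dual to the edge of $\Gamma_1$ through $p$ matter, and likewise for $F_2$. Write these edges in the dual subdivision as $u_1=(\alpha_1,\beta_1)\cdot w_1$ and $u_2=(\alpha_2,\beta_2)\cdot w_2$, with $(\alpha_i,\beta_i)$ primitive. After a change of coordinates $z_j\mapsto t^{-p_j}z_j$ (rescaling so that $p$ becomes the origin) and multiplication of $F_i$ by suitable powers of $t$, the initial forms become binomials
\[f_1=a\,z^{m}+a'\,z^{m+w_1(\alpha_1,\beta_1)},\qquad f_2=b\,z^{n}+b'\,z^{n+w_2(\alpha_2,\beta_2)}\]
with $a,a',b,b'\in k^\times$. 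These are read off from the enrichment. The solutions of $F_1=F_2=0$ tropicalizing to $p$ are in bijection with the solutions of $f_1=f_2=0$ in $(\overline{k}^\times)^2$, by a multivariate Hensel lemma. The system is nondegenerate because the characteristic is $0$ or larger than $d_1+d_2$, so the Jacobian of the initial system is invertible at each solution. Moreover, $\langle\det\operatorname{Jac}\rangle$ only depends on the initial coefficient, by Exercise \ref{exercise:GWPuiseux}. So $\operatorname{mult}^{\A^1}_p$ equals $\sum_x \Tr_{\kappa(x)/k}\langle\det\operatorname{Jac}(f_1,f_2)(x)\rangle$, summed over the zeros of the binomial system over $k$. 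Indeed, the isomorphism $\GW(\Puiseux{k})\cong\GW(k)$ is compatible with traces of unramified extensions, and the relevant residue fields are of the form $\kappa(x)\{\!\{t\}\!\}$.

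The second step is to solve the binomial system. Dividing, the zeros are the solutions of $z^{w_1(\alpha_1,\beta_1)}=-a/a'$ and $z^{w_2(\alpha_2,\beta_2)}=-b/b'$. Choose $M\in GL_2(\Z)$, adapted via Smith normal form, transforming the lattice spanned by these exponent vectors into a diagonal one $\operatorname{diag}(e_1,e_2)$ with $e_1e_2=|\det|=\operatorname{Area}(\Delta_p)$. This turns the system into $y_1^{e_1}=c_1$, $y_2^{e_2}=c_2$. I will need to track how $\langle\det\operatorname{Jac}\rangle$ transforms under the monomial change of coordinates. On the torus, with $z$-monomial factors, the Jacobian determinant of the binomial system at a zero is, up to squares, a product of $\det$ of the exponent matrix, the coefficients, and monomials in $z$. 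This should reduce the trace computation to the one-variable fact
\[\sum_{y^e=c}\Tr\langle e\,y^{e-1}\cdot\lambda\rangle=\begin{cases}\frac e2 h & e \text{ even},\\ \frac{e-1}{2}h+\langle\lambda c'\rangle & e\text{ odd},\end{cases}\]
which is the $\A^1$-degree of $y\mapsto y^e-c$ computed via the Bézoutian. The total is then a product of two such forms.

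The third step, which I expect to be the main obstacle, is matching the outcome with the vertex formula. The counting part is routine. The parity of the $e_i$ determines how many hyperbolic summands appear. One then checks combinatorially that the number of odd vertices of $\Delta_p$ is $0$ or $2$ (or $1$ after a translation argument) exactly when the product form contains $0$ or nonhyperbolic summands correspondingly. The rank identity $\operatorname{Area}(\Delta_p)=2\cdot\#h+\#\{\text{odd } v\}$ then pins down the hyperbolic coefficient, by Remark \ref{rmk:wittandrankdeterminegw}. The delicate part is showing that the surviving nonhyperbolic terms are exactly $\langle\epsilon_p(v)a_v\rangle$. Here $a_v$ is the product coefficient of the union (e.g.\ $a b'$), and the sign comes from the orientation of $\det\operatorname{Jac}$: swapping the roles of $F_1$ and $F_2$ flips the sign, matching the anticlockwise convention. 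I would first verify this in the basic case $\Delta_p$ a unit square with $e_1=e_2=1$, where there is a single zero and $\det\operatorname{Jac}$ is computed directly. Then I would reduce the general case to it by applying $GL_2(\Z)$ transformations and translations by even lattice vectors. These preserve the oddness of vertices modulo a check, and they change monomial coefficients only by squares.
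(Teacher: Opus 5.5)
Your Step 1 is the right framework: initial binomials, Hensel lifting, the fact that $t^q$ is a square, and compatibility of $\GW(\Puiseux{k})\cong\GW(k)$ with traces. This is the direct computation the paper alludes to. However, Step 2 contains a genuine error, and it propagates into Step 3.

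Write $v_i=w_i(\alpha_i,\beta_i)$, $c_1=-a/a'$ and $c_2=-b/b'$. At a common zero $z\in(\overline{k}^\times)^2$ one has $\partial_jf_1=-a\,v_{1j}z^m/z_j$ and $\partial_jf_2=-b\,v_{2j}z^n/z_j$, hence
\[\det\operatorname{Jac}(f_1,f_2)(z)=ab\,\det(v_1,v_2)\,z^{m+n-(1,1)}.\]
So $\operatorname{mult}^{\A^1}_p$ is the trace form of $A=k[z^{\pm1}]/(z^{v_1}-c_1,z^{v_2}-c_2)$ twisted by this element. The monomial $z^{m+n-(1,1)}$ depends on where $\Delta_p$ sits in the plane, namely on its vertex $m+n$, and this is exactly where odd vertices enter. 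After your Smith normal form substitution it becomes $y_1^{j_1}y_2^{j_2}$ with position-dependent exponents, not $y_1^{e_1-1}y_2^{e_2-1}$ times squares and constants.

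For a general twist $\lambda y^j$, the trace form of $k[y]/(y^e-c)$ equals $\frac{e-2}{2}h+\langle\lambda'\rangle+\langle\lambda' c\rangle$ for some $\lambda'$ when $e$ and $j$ are both even. This is not hyperbolic. With only the $y^{e-1}$ twist, your product has a non-hyperbolic part exactly when $e_1e_2=\operatorname{Area}(\Delta_p)$ is odd. You would therefore conclude $\operatorname{mult}^{\A^1}_p=\frac{\operatorname{Area}(\Delta_p)}{2}h$ for every $\Delta_p$ of even area, which is false. Take $v_1=(2,0)$, $v_2=(0,1)$, $m+n=(1,1)$: the algebra is $k[z_1]/(z_1^2-c_1)$ with constant twist $2ab$. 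The form is $\langle ab\rangle+\langle abc_1\rangle=\langle ab\rangle+\langle -a'b\rangle$. This matches the theorem for the odd vertices $(1,1),(3,1)$, and it is not $h$.

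For the same reason, Step 3 is mis-set. $\Delta_p$ has exactly one odd vertex if its area is odd, and $0$, $2$ or $4$ if its area is even. The case of $4$ occurs for $v_1=(2,0)$, $v_2=(0,2)$ at $(1,1)$, where the answer is $\langle ab\rangle+\langle -a'b\rangle+\langle -ab'\rangle+\langle a'b'\rangle$ with no $h$.

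No translation argument can move between these cases. Translating by a vector outside $2\Z^2$ multiplies the twist by a non-square monomial. A substitution $z=y^M$ multiplies the Jacobian by $\det M\,z_1z_2/(y_1y_2)$, which shifts the role of $(1,1)$ to $M^T(1,1)$ modulo $2$; so your "check" is not a formality. Nor can you reduce to the unit square, because these operations preserve the area.

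The repair is to keep the twist. In the monomial basis $z^u$, $u\in\Z^2/L$ with $L=\Z v_1+\Z v_2$, one has $\Tr_{A/k}(z^w)=0$ for $w\notin L$. So the Gram matrix pairs $z^u$ with $z^{u'}$ exactly when $u+u'\equiv(1,1)-(m+n)\bmod L$.
Pairs of distinct classes give copies of $h$. Self-paired classes exist if and only if some vertex of $\Delta_p$ is odd. When they exist, their number $|\Z^2/(L+2\Z^2)|$ equals the number of odd vertices. Their diagonal entries are $\langle\operatorname{sgn}\det(v_1,v_2)\,ab\,c_1^{s}c_2^{r}\rangle=\langle\epsilon_p(v)a_v\rangle$ for the odd vertex $v=m+n+sv_1+rv_2$ with $s,r\in\{0,1\}$.

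Finally, nondegeneracy of the initial system requires that $\operatorname{char}k$ not divide $\operatorname{Area}(\Delta_p)$. This does not follow from $\operatorname{char}k>d_1+d_2$: edge vectors $(2,-1)$ and $(1,3)$ with $d_1=2$, $d_2=4$ give area $7$. It has to be read as part of the assumption that $\operatorname{mult}^{\A^1}_p$ is defined.
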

\begin{figure}
    \begin{tabular}{ccc}
    \begin{tikzpicture}
        \draw[thick,blue] (0,-1)--(0,2);
        \draw[thick,red] (-1,1)--(2,0);
        \node[below left,scale=0.8,black] at (0,1) {$p$};
        \node[below left,scale=0.8,blue] at (0,0) {$\Gamma_1$};
        \node[below left,scale=0.8,red] at (2,0) {$\Gamma_2$};
    \end{tikzpicture}
    &
    \begin{tikzpicture}
        \filldraw[fill=gray!10!white,thick,draw=none] (0,0)--(1,0)--(2,2)--(1,2)--cycle;
        \draw[blue,thick](0,0)--(1,0);
        \draw[blue,thick](1,2)--(2,2);
        \draw[red,thick](1,0)--(2,2);
        \draw[red,thick](0,0)--(1,2);
        \node[black] at (1,1) {$\Delta_p$};
         \node[below right, scale=0.8, purple] at (1,0) {$v$};
         \draw[->, bend right=30,purple] (1.25,0.5) to (0.5,0);
    \end{tikzpicture}&
    \begin{tikzpicture}
        \filldraw[fill=gray!10!white,thick,draw=none] (0,0)--(1,0)--(2,2)--(1,2)--cycle;
        \draw[blue,thick](0,0)--(1,0);
        \draw[blue,thick](1,2)--(2,2);
        \draw[red,thick](1,0)--(2,2);
        \draw[red,thick](0,0)--(1,2);
        \node[black] at (1,1) {$\Delta_p$};
        \node[above right, scale=0.8, purple] at (2,2) {$v$};
        \draw[->, bend right=30,purple] (1.5,2) to (1.75,1.75);
    \end{tikzpicture}
    \end{tabular}
    \caption{For $v$ in the middle $\epsilon_p(v)=-1$ and for $v$ on the right $\epsilon_p(v)=+1$.}
    \label{fig:epsilons}
\end{figure}
\begin{ex}
\label{ex:enrichedintersectionmult}
    Let's go back to Example \ref{ex:intersectionoftropicallinewithconic}.
    Let $\Gamma_1$ be the tropical line (in blue) and $\Gamma_2$ be the tropical conic (in red) in Figure \ref{fig:conicandline}. There is only one odd lattice point inside of $\Delta_3$, namely $(1,1)$. Let $[a_{11}]\in \nicefrac{k^\times}{(k^\times)^2}$ be the coefficient of $(1,1)$.
    For the intersection on the left, the tropical $\A^1$-intersection multiplicities are as follows
    \[\mult_p(\Gamma_1,\Gamma_2)=\langle a_{11}\rangle, \; \mult_q(\Gamma_1,\Gamma_2)=\langle-a_{11}\rangle.\]
    To compute the tropical $\A^1$-intersection multiplicity on the right note that $\Delta_p$ has no odd vertices (there is one odd lattice point in the interior but this does not count). The area of $\Delta_p$ on the right is $2$, so using the formula in Theorem \ref{thm:enrichedintersectionmult} we get
    \[\mult_p^{\A^1}(\Gamma_1,\Gamma_2)=\frac{2}{2}h=h.\]
\end{ex}

The proof of Theorem \ref{thm:enrichedintersectionmult} is straightforward, yet tedious: One computes $\mult_p^{\A^1}(\Gamma_1,\Gamma_2)$ as defined in Definition \ref{def:enrichedtropintmult} and identifies it with the combinatorial formula in Theorem \ref{thm:enrichedintersectionmult}.

\begin{ex}
    Let's verify Theorem \ref{thm:enrichedintersectionmult} for the case of two lines intersecting.
    Let $F_1=\ta_1(t)z_1+\tb_1(t)z_2+\tc_1(t)\in \Puiseux{k}[z_1,z_2]$ and $F_2=\ta_2(t)z_1+\tb_2(t)z_2+\tc_2(t)\in \Puiseux{k}[z_1,z_2]$.
    Then the vertex of the tropical line $\Gamma_1$ associated with $F_1$ lies on the line 
    \[y=x+\val(\tb_1)-\val(\ta_1)\]
    and the vertex of the tropical line $\Gamma_2$ associated with $F_2$ lies on the line 
    \[y=x+\val(\tb_2)-\val(\ta_2).\]

    Furthermore,
    the coefficient of $z_1\cdot z_2$ in $F_1\cdot F_2$ is $\ta_1(t)\tb_2(t)+\ta_2(t)\tb_1(t)$.
    Let $a_1$, $a_2$, $b_1$ and $b_2$ be the lowest non-vanishing coefficients of $\ta_1(t)$, $\ta_2(t)$, $\tb_1(t)$ and $\tb_2(t)$, respectively. 
    Then the coefficient of the lowest power of $t$ of $\ta_1(t)\tb_2(t)+\ta_2(t)\tb_1(t)$ is 
    \begin{itemize}
        \item $a_1b_2$ if $\val(\ta_1)+\val(\tb_2)<\val(\ta_2)+\val(\tb_1)$,
        \item $a_2b_1$ if $\val(\ta_1)+\val(\tb_2)>\val(\ta_2)+\val(\tb_1)$.
    \end{itemize}
    In case $\val(\ta_1)+\val(\tb_2)=\val(\ta_2)+\val(\tb_1)$, the two tropical lines share an edge and thus do not intersect tropically transversally. So we do not need to consider this case.
    Note that this agrees with the coefficient of the odd vertex $(1,1)$ of the dual subdivision of $\Gamma_1\cup \Gamma_2$ in both cases. 
    
    Let's compute the enriched intersection multiplicity.
    \[\langle\det \operatorname{Jac} (F_1,F_2)\rangle=\langle \ta_1(t)\tb_2(t)-\ta_2(t)\tb_1(t)\rangle\in \GW(\Puiseux{k})\]
    Under the isomorphism from Exercise \ref{exercise:GWPuiseux} this agrees with 
    \begin{itemize}
        \item $\langle a_1b_2\rangle$ if $\val(\ta_1)+\val(\tb_2)<\val(\ta_2)+\val(\tb_1)$,
        \item $\langle-a_2b_1\rangle$ if $\val(\ta_1)+\val(\tb_2)>\val(\ta_2)+\val(\tb_1)$.
    \end{itemize}
    Now it is easy to check that this is indeed equal to the combinatorial formula in Theorem \ref{thm:enrichedintersectionmult}, the minus in the second case is the sign $\epsilon_p(v)$.
\end{ex}

\begin{cor}[Tropical Bézout for enriched tropical curves]
\label{cor:tropicalenrichedbezout}
Let $\Gamma_1$ and $\Gamma_2$ be enriched tropical curves of degree $d_1$ respectively $d_2$.
Further assume that $k$ is a field of characteristic $0$ or greater than the sum of the degrees of $\Gamma_1$ and $\Gamma_2$.
Assume $k$ is a field of characteristic not equal to $2$.
    Further, assume $d_1+d_2\equiv 1\mod 2$. Then
        \[\sum_{p\in \Gamma_1\cap \Gamma_2} \operatorname{mult}^{\A^1}_p(\Gamma_1,\Gamma_2)=\frac{d_1\cdot d_2}{2}\cdot h\in \GW(k).\]
\end{cor}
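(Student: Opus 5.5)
Summing the formula of Theorem \ref{thm:enrichedintersectionmult} over all $p\in\Gamma_1\cap\Gamma_2$ and using the classical tropical Bézout theorem (Theorem \ref{thm:Bezoutfortropcurves}), the sum of the areas of the parallelograms $\Delta_p$ is $d_1d_2$. Hence
\[\sum_p\mult^{\A^1}_p(\Gamma_1,\Gamma_2)=\frac{d_1d_2-N}{2}h+\sum_p\sum_{v\text{ odd vertex of }\Delta_p}\langle\epsilon_p(v)a_v\rangle,\]
where $N$ is the total number of pairs $(p,v)$ with $v$ an odd vertex of $\Delta_p$. By Exercise \ref{exercise:hyperbolicform}, $\langle a\rangle+\langle -a\rangle=h$. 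So it suffices to pair the terms $\langle\epsilon_p(v)a_v\rangle$ into pairs with the same $a_v$ and opposite signs, since each such pair contributes $h$ and the $N/2$ pairs then yield exactly $\frac{d_1d_2}{2}h$.

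To get this pairing, I would fix an odd lattice point $v$ in $\Delta_{d_1+d_2}$ and study its star in the dual subdivision $\operatorname{DS}(\Gamma_1\cup\Gamma_2)$. That subdivision consists of polygons from $\operatorname{DS}(\Gamma_1)$ (translated), polygons from $\operatorname{DS}(\Gamma_2)$ (translated), and the parallelograms $\Delta_p$. The edges around $v$ are of two kinds: those dual to edges of $\Gamma_1$ (parallel copies of edges of $\operatorname{DS}(\Gamma_1)$) and those dual to edges of $\Gamma_2$. Walking anticlockwise around $v$, each parallelogram $\Delta_p$ containing $v$ as a vertex is bounded at $v$ by one edge of each kind. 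A cell coming from $\operatorname{DS}(\Gamma_i)$ has both its edges at $v$ of type $i$. Therefore the edge type changes exactly when one passes through a parallelogram. If $v$ is an interior point of $\Delta_{d_1+d_2}$, the walk closes up, so the type changes an even number of times. The changes alternate between $1\to 2$ (giving $\epsilon=+1$) and $2\to 1$ (giving $\epsilon=-1$), so the signs cancel in pairs. The coefficient $a_v$ is attached to $v$ itself and is the same for all these parallelograms. This gives the pairing.

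The main obstacle is odd points $v$ on the boundary of $\Delta_{d_1+d_2}$, together with the degenerate situations where $v$ is a vertex of no $\Delta_p$, or where a parallelogram has $v$ as a non-vertex boundary point. Points of the latter kind are not counted in the formula, so only the boundary case needs real work. On the boundary the walk around $v$ is an arc, not a loop. On the edge $\{i=0\}$ or $\{j=0\}$ no point has both coordinates odd, so only the hypotenuse $i+j=d_1+d_2$ matters, and odd points lie there only if $d_1+d_2$ is even. This is excluded by the hypothesis $d_1+d_2\equiv 1\bmod 2$, so no odd point lies on the boundary. This is precisely where the parity assumption enters. I would spell out this check carefully, together with the claim that the type changes along the walk alternate. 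The latter follows since two consecutive parallelograms meeting $v$ are separated only by cells of a single type. Finally, the identification $\GW(\Puiseux{k})\cong\GW(k)$ from Exercise \ref{exercise:GWPuiseux} transports the statement to $\GW(k)$.
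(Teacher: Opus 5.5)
Your proposal is correct and is essentially the paper's argument. It uses the same walk around each odd lattice point: the colour changes exactly at corners of the parallelograms $\Delta_p$, so $\epsilon_p(v)=+1$ and $\epsilon_p(v)=-1$ occur equally often. It also uses the same observation that $d_1+d_2$ odd keeps odd lattice points off $\partial\Delta_{d_1+d_2}$.

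The only difference is the final bookkeeping. You add the pairs $\langle a_v\rangle+\langle -a_v\rangle=h$ directly in $\GW(k)$, using that the parallelogram areas sum to $d_1d_2$. The paper instead shows the sum vanishes in $\operatorname{W}(k)$ and recovers it from its rank via Remark~\ref{rmk:wittandrankdeterminegw}, so your version does not need $\operatorname{char}k\neq 2$ at that step.
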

\begin{proof}

            First, observe that 
            $d_1+d_2$ odd implies that there are no odd lattice points on $\partial \Delta_{d_1+d_2}$. In other words, all odd lattice points lie in the interior of $\Delta_{d_1+d_2}$.
            Let $v$ be a lattice point in the interior of $\Delta_{d_1+d_2}$ (for example an odd lattice point). Then, if you make a full turn around this point, observe that you change color exactly when $v$ is a vertex of a parallolgram corresponding to an intersection point and also observe that the order of color change determines the sign $\epsilon_p(v)$. Therefore:
            \begin{itemize}
                \item The number of parallelograms in $\operatorname{DS}(\Gamma_1\cup\Gamma_2)$ corresponding to an intersection of $\Gamma_1$ and $\Gamma_2$ with vertex $v$ is even.
                \item $\#\{p\in \Gamma_1\cap \Gamma_2:$ $v$ vertex of $\Delta_p$, $\epsilon_p(v)=+1\}
                =\#\{p\in \Gamma_1\cap \Gamma_2:$ $v$ vertex of $\Delta_p$, $\epsilon_p(v)=-1\}$.
            \end{itemize}
            So in $\operatorname{W}(k)=\frac{\GW(k)}{\Z\cdot h}$ we have
            \[\sum_{p\in \Gamma_1\cap \Gamma_2}\mult_p^{\A^1}(\Gamma_1,\Gamma_2)=\sum_{\text{$v$ odd}}\frac{\#\{p\in \Gamma_1\cap \Gamma_2:v\in \Delta_p\text{ a vertex}\}}{2}(\langle a_v\rangle+\langle-a_v\rangle).\]
            
            Now recall from Exercise \ref{exercise:hyperbolicform} that $\langle a_v\rangle+\langle-a_v\rangle=h$ and thus in $\operatorname{W}(k)$ we get
            \[\sum_{p\in \Gamma_1\cap \Gamma_2}\mult_p^{\A^1}(\Gamma_1,\Gamma_2)=0.\]

             Recall from Remark \ref{rmk:wittandrankdeterminegw} that when $\operatorname{char}k\neq 2$ an element of $\GW(k)$ is uniquely determined by its image in $\operatorname{W}(k)$ and its rank. Thus in $\GW(k)$
             \[\sum_{p\in \Gamma_1\cap \Gamma_2}\mult_p^{\A^1}(\Gamma_1,\Gamma_2)=\frac{\rk\left(\sum_{p\in \Gamma_1\cap \Gamma_2}\mult_p^{\A^1}(\Gamma_1,\Gamma_2)\right)}{2}h\overset{\ref{thm:Bezoutfortropcurves}}{=}\frac{d_1\cdot d_2}{2}h.\]
    \end{proof}
    \begin{rmk}
    Note that the proof of Corollary \ref{cor:tropicalenrichedbezout} fails when $d_1+d_2$ is even which is exactly when the vector bundle $\glob(d_1)\oplus \glob(d_2)\rightarrow \P^2_k$ is not relatively orientable. So this non-orientability is reflected in the combinatorics of the corresponding tropical curves.
    
    %This seems to be no coincidence: For example one could also intersect two curves in $\P^1_k\times \P^1_k$ of bidegrees $(d_1,e_1)$ and $(d_2,e_2)$. Such curves tropicalize to tropical curves with Newton polygons $\operatorname{Conv}\{(0,0),(d_i,0),(0,e_i),(d_i,e_i)\}$ for $i=1,2$ and the Newton polygon of the union of these tropical curves is $\operatorname{Conv}\{(0,0),(d_1+d_2,0),(0,e_1+e_2),(d_1+d_2,e_1+e_2)\}$ which has no odd lattice points on the boundary (this is what is needed in the proof of Corollary \ref{cor:tropicalenrichedbezout}) if and only if the vector bundle $\glob(d_1,e_1)\oplus \glob(d_2,e_2)\rightarrow \P^1_k\times \P^1_k$ is relatively orientable.
    
    %However, it is an open question whether this observation holds in general \cite[Conjecture 6.16]{JaramilloPuentesPauliBezout}.
\end{rmk}

\begin{ex}
    Let's continue Example \ref{ex:intersectionoftropicallinewithconic} and Example \ref{ex:enrichedintersectionmult} and check whether we get the correct result. As computed in Example \ref{ex:enrichedintersectionmult} for the left picture in Figure \ref{fig:conicandline} we get that
    \[\mult_p^{\A^1}(\Gamma_1,\Gamma_2)+\mult_q^{\A^1}(\Gamma_1,\Gamma_2)=\langle a_{11}\rangle+\langle-a_{11}\rangle=h.\]
    In the right picture, there is only one intersection point $p$ and we computed in Example \ref{ex:enrichedintersectionmult} that $\mult_p^{\A^1}(\Gamma_1,\Gamma_2)=h$.
    So both sums of $\A^1$-intersection multiplicities agree and they also agree with $\frac{d_1\cdot d_2}{2}h=\frac{1\cdot 2}{2}h=h$ as they should by Corollary \ref{cor:tropicalenrichedbezout}.
\end{ex} 

Now Theorem \ref{thm:bezoutforcurvesoverpuiseux} follows directly from Theorem \ref{thm:enrichedintersectionmult} and Corollary \ref{cor:tropicalenrichedbezout}.

\begin{rmk}
    This also works in other dimensions, that is not only for curves but for hypersurfaces, and other Newton polytopes leading to a quadratically enriched Bernstein-Kushnirenko theorem as shown in \cite{JaramilloPuentesPauliBezout}.
\end{rmk}
\begin{rmk}
We can actually deduce McKean's Bézout's theorem (see Example \ref{ex:bezoutoveranyfield}) over any field $k$ of characteristic $0$ or bigger than the sum of degrees of the polynomials involved from Theorem \ref{thm:bezoutforcurvesoverpuiseux}. The reason for this is that the Poincaré Hopf Euler number (see Theorem \ref{thm:A1PHthm}) of the vector bundle $\glob(d_1)\oplus \glob(d_2)\rightarrow \P^2_k$ is sent to the Poincaré Hopf Euler number of $\glob(d_1)\oplus \glob(d_2)\rightarrow \P^2_{\Puiseux{k}}$ under the isomorphism $\GW(k)\xrightarrow{\cong} \GW(\Puiseux{k})$ (this isomorphism is induced by the inclusion $k\hookrightarrow \Puiseux{k}$ and inverse to the map in Exercise \ref{exercise:GWPuiseux}).
\end{rmk}

\subsection{Plane rational curve counts with point conditions}
Finally, we want to review a very powerful application of tropical geometry to enumerative geometry. %which allows us to solve a problem in enumerative geometry in $\GW(k)$ that, unlike Bézout's theorem, could yet not solved by other means.

A major breakthrough of the use of tropical geometry in enumerative geometry was Mikhalkin's tropical correspondence theorem \cite{Mikhalkin}.
Let's first describe the classical enumerative geometry problem.
This concerns counting rational plane degree $d$ curves through $n=3d-1$ points in general position over $\C$. Call this number $N_d$. 
\begin{ex}
    There is a unique degree $1$ curve, that is a line, going through two points. So $N_1=1$. 
    There is a unique conic, that is degree $2$ plane rational curve, through $5$ given point. So $N_2=1$.
    But after that $N_d$ grows fast. For example $N_3=12$, $N_4=620$, $N_5=87304$, $N_6=26312976$, $N_7=14616808192$.
\end{ex}

\begin{definition}
A plane tropical curve $\Gamma$ is \emph{nodal} if each of its vertices is $3$- or $4$-valent, and every $4$-valent vertex is dual to a parallelogram, meaning that locally the curve looks like a transverse crossing.
The genus of a plane tropical curve $\Gamma$ is given by
\[g(\Gamma)=b_1(\Gamma)-\#\{\text{$4$-valent vertices}\}\]
where $b_1(\Gamma)$ denotes the first Betti number of $\Gamma$.
    A plane tropical curve $\Gamma$ is \emph{rational} if its genus is $0$.
\end{definition}

Milhalkin assigns a multiplicity $\mult_\C(\Gamma)$ to a tropical curve $\Gamma$.
Mikhalkin's tropical correspondence theorem provides a way to find $N_d$ by translating the problem into a problem in tropical geometry with the following theorem. On the tropical side, the point configuration has to be in \emph{tropical general position} which is the tropical analogue of general position (for the precise definition see \cite{Mikhalkin}).
\begin{thm}[Mikhalkin]
\label{thm:mikhalkincomplexcorrespondence}
    $$N_d=N_d^{\trop}\coloneqq \sum_\Gamma\mult_\C(\Gamma)$$
    where the sum goes over all rational nodal tropical curves with Newton polygon $\Delta_d$ through a configuration of $3d-1$ points in tropical general position.
\end{thm}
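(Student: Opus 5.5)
The plan is to prove the correspondence by working over the field $\Puiseux{\C}$ and then transferring the result back to $\C$. First I would note that $N_d$ does not depend on the point configuration, as long as the points are in general position over an algebraically closed field of characteristic $0$. Since $\Puiseux{\C}$ is algebraically closed of characteristic $0$ and has the same cardinality as $\C$, we may compute $N_d$ over $\Puiseux{\C}$. There I choose points $\tp^{(1)},\ldots,\tp^{(3d-1)}\in(\Puiseux{\C}^\times)^2$ whose tropicalizations $p^{(1)},\ldots,p^{(3d-1)}\in\R^2$ are in tropical general position. Because the tropical genericity conditions are open, I can additionally choose the initial coefficients of the $\tp^{(i)}$ generically in $(\C^\times)^2$, which guarantees algebraic general position.

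\textbf{Step 1: rational curves tropicalize to rational nodal tropical curves.} Each rational degree $d$ curve $C$ through the $\tp^{(i)}$ is defined by some $F\in\Puiseux{\C}[z_1,z_2]$ with Newton polygon $\Delta_d$. By Kapranov's theorem, its tropicalization $\Gamma=V^{\trop}(F^{\trop})$ passes through the $p^{(i)}$. To get finer information I would tropicalize a parametrization $\P^1\to C$ rather than just the curve. This realizes $\Gamma$ as the image of a tropical rational curve, that is, a metric tree with $3d$ unbounded ends of weight $1$ pointing in the directions $(-1,0)$, $(0,-1)$, $(1,1)$.

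A dimension count then does the work. The space of such parametrized tropical curves with combinatorial type $\alpha$ has dimension $3d-1-\sum_v(\operatorname{val}(v)-3)$. Tropical general position forces the types that actually occur to be trivalent, with all marked points in the interiors of edges and the parametrization injective away from finitely many transverse crossings. The image $\Gamma$ is therefore nodal of genus $0$, and there are only finitely many such $\Gamma$.

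\textbf{Step 2: count lifts of each tropical curve.} For each rational nodal tropical curve $\Gamma$ through the $p^{(i)}$, I claim the number of rational curves over $\Puiseux{\C}$ through the $\tp^{(i)}$ that tropicalize to $\Gamma$ equals
\[
\mult_\C(\Gamma)=\prod_{v\text{ trivalent}}\Area(\Delta_v)\cdot 2,
\]
where $\Delta_v$ is the triangle dual to $v$; this is the normalized lattice area, i.e.\ twice the Euclidean area. My approach is a degeneration (patchworking) argument, which I would carry out in four parts.

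\begin{enumerate}
\item Use the dual subdivision $\operatorname{DS}(\Gamma)$ to build a toric degeneration of $\P^2$ over $\operatorname{Spec}\C[[t^{1/N}]]$. Its special fibre is the union of toric surfaces $X_{\Delta_v}$ attached to the polygons of $\operatorname{DS}(\Gamma)$.
\item Count the possible limit curves. In each triangular component, the limit is a rational curve meeting the three toric boundary divisors in single points, with contact orders equal to the edge weights. In each parallelogram component, the limit is a pair of transverse lines; these are the nodes, and they contribute nothing to the count.
\item Fix the positions of the limit points on the edges using the marked points. Since $\Gamma$ is a tree after resolving nodes, this data can be propagated from the marked edges inward. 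The number of consistent choices of limit curves on the triangles, up to the gluing conditions, is then counted as follows:
\begin{itemize}
\item each trivalent vertex contributes $\frac{\Area(\Delta_v)}{\prod_{e\ni v}w(e)}$ times factors coming from the roots-of-unity ambiguities;
\item each bounded edge $e$ contributes $w(e)$ ways to smooth the tangency, from solving $u^{w(e)}=c$ in the local deformation.
\end{itemize}
Multiplying these together yields $\prod_v\Area(\Delta_v)$.
\item Show that each such limit deforms uniquely to a curve over $\Puiseux{\C}$. I would do this using log deformation theory in the style of Nishinou--Siebert, or alternatively Shustin's patchworking via Hensel/implicit-function arguments for the Puiseux coefficients. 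The key input is that the obstruction space vanishes because $\Gamma$ is a trivalent tree.
\end{enumerate}

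\textbf{Step 3: sum up.} Summing over all $\Gamma$, and using the fact from Step 1 that every curve is counted exactly once, gives $N_d=\sum_\Gamma\mult_\C(\Gamma)$.

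The main obstacle is Step 2(4), together with the careful bookkeeping of the edge-weight and root-of-unity factors in Step 2(3): one must show that the lifting is unobstructed and that distinct smoothing choices give distinct curves. I would first try to handle this via the log-smooth deformation theory of the pre-log curves in the degenerate fibre, reducing the count to a lattice-index computation. In that computation the determinant of the evaluation map on the tropical moduli space should directly produce $\prod_v\Area(\Delta_v)$.
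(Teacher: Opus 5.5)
The paper gives no proof of this theorem. It only quotes it from \cite{Mikhalkin}, whose argument goes through complex tropical curves (limits of amoebas together with their arguments). Your outline follows the algebraic degeneration route of Shustin and Nishinou--Siebert instead. That is a legitimate alternative, and your Steps 0, 1 and 3 are the standard reductions.

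A small correction to Step 0. Generic initial coefficients do give algebraic general position, but not because tropical genericity is open. The reason is this: for any nonzero $G$ vanishing on the bad locus, the leading coefficient of $G(\tp^{(1)},\ldots,\tp^{(3d-1)})$ is the initial form of $G$ with respect to the weight given by the $p^{(i)}$, evaluated at the initial coefficients. That initial form is a nonzero polynomial, so a generic choice avoids its zeros.

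The genuine problem is Step 2(3), which is exactly where the multiplicity comes from.
\begin{itemize}
\item Your vertex factor $2\Area(\Delta_v)/\prod_{e\ni v}w(e)$ counts nothing and need not be an integer. For the vertex with edge vectors $(2,0),(0,2),(-2,-2)$ it equals $4/8$.
\item Combined with a factor $w(e)$ per bounded edge (ends have weight $1$), it gives $\prod_v 2\Area(\Delta_v)/\prod_{e \text{ bounded}} w(e)$. So the claim that the product is $\prod_v 2\Area(\Delta_v)$ rests entirely on the unspecified ``roots-of-unity factors''.
\item Being a tree is also not what makes the propagation work.
\end{itemize}

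The missing ingredient is the lemma (Mikhalkin; Gathmann--Markwig) that, for points in tropical general position, every connected component of the parametrizing tree minus the marked points contains exactly one unbounded end. Orient edges towards that end. Then each trivalent vertex $v$ has exactly two incoming edges, with primitive directions $u_1,u_2$ and weights $w_1,w_2$, whose contact points $c_1,c_2$ are already fixed. The limit curves on $X_{\Delta_v}$ are the torus translates by $\lambda\in(\C^\times)^2$ of one parametrized rational curve. The conditions are $\lambda^{m_1}=c_1$, $\lambda^{m_2}=c_2$ with $m_i$ primitive and orthogonal to $u_i$. This system has $|\det(u_1,u_2)|=2\Area(\Delta_v)/(w_1w_2)$ solutions, so you divide only by the two incoming weights.

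The edge factors then work as follows. An unmarked bounded edge is incoming at exactly one vertex and contributes $w(e)$ smoothings. A bounded edge through a marked point is incoming at both of its vertices and contributes $w(e)^2$, one factor for each of the two nodes beside the marked component. Only with this bookkeeping do all weights cancel to $\prod_v 2\Area(\Delta_v)=\mult_\C(\Gamma)$. Write it in that form: with the paper's Euclidean $\Area$, your ``$\prod_v\Area(\Delta_v)\cdot 2$'' reads as a different number.

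Relatedly, the degeneration built from $\operatorname{DS}(\Gamma)$ sends each marked point into a one-dimensional torus orbit, that is, onto a double curve exactly at a contact point. The Nishinou--Siebert framework does not allow this: point constraints must specialize into open orbits, and marked points cannot sit at nodes. You need either a refined polyhedral decomposition with the $p^{(i)}$ and the crossing points as vertices (this is where the two nodes around a marked point come from), or Shustin's separate treatment of edges carrying points.

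Finally, Step 2(4) — unobstructedness and the exact number of lifts — is the analytic core of the theorem and is only cited. So what you have is an outline of the known proof rather than a proof.
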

\begin{rmk}
Mikhalkin's theorem is in fact stronger: it also applies to curves of positive genus. 
In particular, one can count plane genus~$g$ curves passing through $3d + g - 1$ points in general position in the plane, and Mikhalkin identifies this number with the weighted count of their tropical counterparts, using the same multiplicities as in Theorem~\ref{thm:mikhalkincomplexcorrespondence}. 
However, this extension does not hold over $\R$ or other non-algebraically closed fields, which is why we concentrate on rational curves in these lecture notes.

\end{rmk}

\begin{ex}
    There is a unique tropical line going through two given points in $\R^2$ if these points are in tropical general position (see Figure \ref{fig:tropicallinethrough2points}).
    \begin{figure}
    \begin{tabular}{ccc}
    \begin{tikzpicture}
    % Rays
    \draw[thick] (0,0) -- (-1.5,0);
    \draw[thick, red, fill=red] (-1,0) circle (2pt);
    \draw[thick] (0,0) -- (0,-1.5);
    \draw[thick, red, fill=red] (0,-1) circle (2pt);
    \draw[thick] (0,0) -- (1.5,1.5);
\end{tikzpicture}&
    \begin{tikzpicture}
    % Rays
    \draw[thick] (0,0) -- (-1.5,0);
    \draw[thick, red, fill=red] (-1,0) circle (2pt);
    \draw[thick] (0,0) -- (0,-1.5);
    \draw[thick] (0,0) -- (1.5,1.5);
    \draw[thick, red, fill=red] (1,1) circle (2pt);
\end{tikzpicture}&
    \begin{tikzpicture}
    % Rays
    \draw[thick] (0,0) -- (-1.5,0);
    \draw[thick] (0,0) -- (0,-1.5);
    \draw[thick, red, fill=red] (0,-1) circle (2pt);
    \draw[thick] (0,0) -- (1.5,1.5);
    \draw[thick, red, fill=red] (1,1) circle (2pt);
\end{tikzpicture}
    \end{tabular}
\caption{Tropical lines determined by two points in $\R^2$.\label{fig:tropicallinethrough2points}}
    \end{figure}
\end{ex}

\begin{rmk}
    If a point configuration of $3d-1$ points in $\R^2$ is ``tropically in general position", then all tropical rational degree $d$ curves through it are nodal.
\end{rmk}

Next consider the problem over $k=\R$. In this case, we are interested in the signed count of real rational degree $d$ plane curves through a configuration of $n_1$ real points and $n_2$ pairs of complex conjugate points with $n_1+2n_2=n=3d-1$.
Welschinger finds that one should count such curves with a sign depending on the type of real nodes of the curve, otherewise the count is not invariant meaning that it depends on the chosen point configuration. For this observe that a real curve can have two types of nodes:
\begin{enumerate}
        \item a \emph{split node} locally defined by $x^2-y^2=0$  \hspace{1cm}\scalebox{0.35}{\begin{tikzcd}
    \draw (-1,-1)--(1,1);
    \draw (1,-1)--(-1,1);
\end{tikzcd}}
        \item a \emph{solitary node} locally defined by $x^2+y^2=0$ \hspace{1cm}\scalebox{0.35}{\begin{tikzcd}
    \draw[dash pattern=on 1pt off 4pt] (-1,-1)--(1,1);
    \draw[dash pattern=on 1pt off 4pt] (1,-1)--(-1,1);
     \filldraw[black] (0,0) circle (2pt);
\end{tikzcd}}
\end{enumerate}
\begin{definition}
    Define the \emph{type} of a real node $z$ to be 
    \[\operatorname{type}(z)\coloneqq \begin{cases}
        +1& \text{if it is split}\\
        -1& \text{if it is solitary}
    \end{cases}\]
    Then the \emph{Welschinger sign} of a real cuve $C$ is given by
    \[\Wel_\R(C)\coloneqq \prod_{\text{real nodes $z$}}\type(z).\]
\end{definition}
Note that a real plane curve can also have complex nodes, but these do not contribute to $\Wel_{\R}(C)$.
Welschinger shows the following theorem \cite{Welschinger}.
\begin{thm}
Let $\mathcal{P}$ be a configuration of $n_1$ real points and $n_2$ pairs of complex conjugate points in general position with $n_1+2n_2=n=3d-1$. Then
    \[W_{d,n_2}\coloneqq\sum_{C} \Wel_\R(C)\]
    where the sum goes over all real rational degree $d$ curves through $\mathcal{P}$ is independent of the choice of point configuration.
\end{thm}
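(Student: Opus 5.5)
The plan is to realize the Welschinger count as an $\A^1$-degree on the real points of a moduli space, so that invariance follows from the degree being independent of the regular value. Concretely, I would let $M$ be the space of real rational degree $d$ stable maps $\phi\colon \P^1\to\P^2$ with $n$ marked points. I would equip it with the real structure that conjugates the marked points according to $\mathcal{P}$: $n_1$ of them are fixed and $n_2$ pairs are swapped. The evaluation map
\[\ev\colon M\rightarrow (\P^2)^{n}\]
is then twisted accordingly, so that its target has real points $(\R\P^2)^{n_1}\times(\C\P^2)^{n_2}$. Both spaces have real dimension $2n=6d-2$. A real configuration $\mathcal{P}$ in general position is a regular value, and its preimages are exactly the real rational curves through $\mathcal{P}$. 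Each of these is nodal with exactly $\binom{d-1}{2}$ nodes. If $\ev$ were proper between oriented manifolds, the sum of the local degrees $\sgn\det\operatorname{Jac}\ev$ over $\ev^{-1}(\mathcal{P})$ would be the topological degree. That sum is therefore independent of $\mathcal{P}$, in direct analogy with the local degree formula \eqref{eq:localdegreedifftopformula} and with Theorem \ref{thm:PHtheorem}.

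The first key step is to identify the local sign $\sgn\det\operatorname{Jac}\ev$ at a curve $C$ with $\Wel_\R(C)$. The local orientation of the source changes when the parametrization changes, so I would instead compute $\det\operatorname{Jac}\ev$ relative to the orientation of the space of real degree $d$ plane curves. Writing the deformations of $C$ as sections of the normal sheaf and tracking the sign through the nodes, I expect each solitary node to contribute a factor $-1$. The reason is that its two branches are complex conjugate, and conjugation acts on the corresponding tangent direction with determinant $-1$. A split node should contribute no sign change, and complex conjugate pairs of nodes cancel. Stated differently, the orientation on the smooth locus extends across the codimension-one walls only after twisting by $\Wel_\R$.

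The second and main step is the wall-crossing analysis, since $M(\R)$ is neither compact nor globally orientable and $\ev$ is not proper. As $\mathcal{P}$ moves along a generic path, the set $\ev^{-1}(\mathcal{P})$ changes only at codimension-one degenerations. These are curves with a cusp, curves with a tacnode or triple point, and reducible curves, the last being boundary strata of stable maps. For each wall I would check that the signed count $\sum\Wel_\R(C)$ is unchanged. At a cusp, two real curves merge and become complex, and the two curves have opposite Welschinger signs because one carries a small solitary node and the other a small split loop. At a triple point the number of solitary nodes changes by an even amount. Reducible curves are the hardest case. There I would use the fact that a real curve $C_1\cup C_2$ with a real node at the junction deforms to exactly two real smoothings. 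I would then argue, for instance through Welschinger's own gluing, that the intersection points $C_1\cap C_2$ contribute compatibly on the two sides. Establishing this boundary cancellation is where I expect the real work to lie, and I would try to reduce it to a local model $xy=\epsilon$ in the smoothing parameter.
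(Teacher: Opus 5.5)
The paper does not prove this theorem; it quotes it from Welschinger. The route the paper itself makes available goes through Kass--Levine--Solomon--Wickelgren:
\begin{itemize}
\item $\ev_\sigma$ on the compact space $\overline{\mathcal{M}}_{0,n}(\P^2_k,d)$ is relatively orientable;
\item so $N^{\A^1}_{d,\sigma}$ is a well-defined $\A^1$-degree with local contributions $\Tr_{\kappa(u)/k}\Wel^{\A^1}_{\kappa(u)}(u(C))$;
\item for $k=\R$ and $\sigma=(\C,\ldots,\C,\R,\ldots,\R)$ its signature is $W_{d,n_2}$, which is therefore independent of $\mathcal{P}$.
\end{itemize}
That is the rigorous version of your first paragraph together with your Step 1.

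In that version, properness is not the obstruction, since you can pass to $\overline{\mathcal{M}}$. Orientability of source and target is also the wrong requirement: the target is non-orientable as soon as $n_1>0$. What is needed is a relative orientation of $\ev$. Identifying its local sign with $\Wel_\R(C)$ is exactly the hard orientation theorem, and your remark about conjugation acting with determinant $-1$ does not establish it.

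Your Step 2, however, never uses Step 1. It is essentially Welschinger's original wall-crossing proof, applied directly to $\sum_C\Wel_\R(C)$. It only needs that along a generic path away from walls the curves stay nodal and each real node keeps its type. So either drop Step 1 or treat it as a cited input. The wall-crossing route is elementary but specific to $\R$. The $\A^1$ route gets invariance for free and over any field, at the price of the orientation theorem.

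Within the wall-crossing you have put the difficulty in the wrong place, and the phrase ``boundary cancellation'' signals a misconception. The two real smoothings $\epsilon>0$ and $\epsilon<0$ of a real stable map $C_1\cup_q C_2$ have the same real nodes: those of $C_1$ and $C_2$, plus the other real points of $C_1\cap C_2$, which are split. Hence they have the same Welschinger sign. If they lay on the same side of the wall they would change the count by $\pm 2$ rather than cancel.

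What makes this wall harmless is that $\ev$ is \'etale there. Indeed $\ker d\ev=H^0(C,N_u(-\sum_i x_i))$ and $N_u|_{C_j}=N_{u|_{C_j}}(q)$. Label the components so that $C_1$ carries $3d_1-1$ markings and $C_2$ carries $3d_2$, with $d_j=\deg C_j$; this is what lying on a wall means. Then the line bundle has degree $0$ on $C_1$ and degree $-1$ on $C_2$. The section on $C_2$ is zero, so the constant section on $C_1$ vanishes at $q$, and the bundle has no nonzero sections. Each real junction point therefore yields exactly one real curve on each side of the wall, with equal signs.

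Components swapped by conjugation cannot occur on a wall. The node would then be the only real point of the domain, so $n_1=0$, and $3d-1=2n_2$ forces $d$ odd, whereas $d=2\deg C_1$.

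The genuine cancellation happens at cusps, and there you need more than you assert:
\begin{itemize}
\item $\ker d\ev$ is spanned by the torsion section of $N_u$ at the cusp;
\item $\ev$ has a fold there;
\item this kernel direction is transverse to the cuspidal locus.
\end{itemize}
That transversality is what makes the node born near the cusp solitary on one of the two merging real curves and split on the other.
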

\begin{ex}
    For example $W_{1,n_2}=1$ and $W_{2,n_2}=1$. When $d=3$ we have $W_{3,n_2}=8-2n_2$ for $n_2=0,1,2,3,4$.
\end{ex}

Mikhalkin also provides a tropical correspondence theorem for the computation of $W_{d,0}$ (that is for point configurations consisting of only real points). Namely, he assigns a different multiplicity $\mult_\R(\Gamma)$ to a nodal tropical curve $\Gamma$ and shows the following theorem.
\begin{thm}[Mikhalkin \cite{Mikhalkin}]
\label{thm:Mikhalkinrealcorrespondence}
$$W_{d,0}=W_{d,0}^{\operatorname{trop}}\coloneqq\sum_\Gamma\mult_\R(\Gamma)$$
 where the sum goes over all rational nodal tropical curves with Newton polygon $\Delta_d$ through a configuration of $3d-1$ points in tropical general position.
\end{thm}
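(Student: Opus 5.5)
Mikhalkin's real correspondence theorem will be proved by the same overall strategy as the complex one (Theorem \ref{thm:mikhalkincomplexcorrespondence}), keeping track of signs. We degenerate the point configuration: choose $3d-1$ points $\tp^{(1)},\ldots,\tp^{(3d-1)}\in(\Puiseux{\R}^\times)^2$ with real coefficients whose tropicalizations $p^{(1)},\ldots,p^{(3d-1)}\in\R^2$ are in tropical general position. Since $\Puiseux{\R}$ is real closed and $W_{d,0}$ is invariant by Welschinger's theorem, we may compute $W_{d,0}$ over $\Puiseux{\R}$, i.e.\ for $t$ small and positive. Every rational degree $d$ curve through the $\tp^{(i)}$ tropicalizes, by Kapranov's theorem, to a tropical curve through the $p^{(i)}$. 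After discarding irrelevant components, this tropical curve is a rational nodal tropical curve $\Gamma$ with Newton polygon $\Delta_d$; this uses the remark that tropical general position forces nodality. So the algebraic count splits as a sum over the finitely many $\Gamma$ counted in $W^{\trop}_{d,0}$, and it suffices to show that for each such $\Gamma$
\[\sum_{C\text{ real, }\trop(C)=\Gamma}\Wel_\R(C)=\mult_\R(\Gamma).\]

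\textbf{Step 1: local patchworking.} Fix $\Gamma$ and its dual subdivision, which consists of triangles (trivalent vertices) and parallelograms (crossings). Following Mikhalkin (or the Shustin patchworking approach), the curves $C$ tropicalizing to $\Gamma$ are determined by their limit curves, one for each triangle $\Delta_v$. Such a limit curve is a rational curve with Newton polygon $\Delta_v$ and three boundary points, and it is unique up to torus action. These are glued along edges: an edge of weight $w$ admits $w$ gluings over $\C$. Over $\R$ the gluings are real only for certain sign choices. An odd-weight edge has exactly one real gluing once the adjacent data are fixed, while an even-weight edge has either $0$ or $2$ real gluings, and these two gluings come with real nodes of opposite types.

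\textbf{Step 2: signs.} For each triangle, the nodes of the real limit curve are determined by the lattice points in the interior of $\Delta_v$. A count via Pick's theorem shows that the Welschinger contribution of a real limit curve with Newton polygon $\Delta_v$ is $(-1)^{\#\text{interior lattice points}}$. Parallelograms contribute nodes that are intersections of two real branches, hence split nodes, so they contribute $+1$. Multiplying these, and checking that the even-weight edges contribute factors that cancel, we get the Welschinger signs of the glued curves. Summing over the real gluings should give $\mult_\R(\Gamma)$: it is $\prod_v(-1)^{\#\mathrm{int}(\Delta_v)}$ if all edge weights are odd, and $0$ if some edge has even weight.

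\textbf{Main obstacle.} The hard step is the real patchworking. We must show that the deformation problem gluing the limit curves is transversal, so that the complex solutions number exactly $\mult_\C(\Gamma)$ as in the complex theorem. We must also identify which solutions are real and determine the type of every real node arising in the gluing, in particular the nodes created near even-weight edges. I would first reduce to explicit local models: near an edge of weight $w$, the equation is $y^w=c\,t^{\alpha}$, and one counts real roots according to the parity of $w$ and the sign of $c$. The remaining combinatorics is then a bookkeeping argument over the dual subdivision.
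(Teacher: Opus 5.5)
The notes give no proof of this theorem; they only cite Mikhalkin. So the comparison is with Mikhalkin's argument, or Shustin's patchworking version of it, and your outline does follow that architecture. There is, however, a genuine gap at the step that carries the real content. The justification you give for the triangle contribution does not work.

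\textbf{The triangle sign.} Pick's theorem, or the genus formula, only tells you that the real rational limit curve $C_v$ with Newton triangle $\Delta_v$ has $I(\Delta_v)$ nodes in the torus once it is known to be nodal. Here $I(\Delta_v)$ is the number of interior lattice points. Since non-real nodes come in conjugate pairs, the number of real nodes is congruent to $I(\Delta_v)$ mod $2$. But $\Wel_\R$ counts only solitary nodes. To get $(-1)^{I(\Delta_v)}$ you must show that $C_v$ has no split node, and nothing in your proposal addresses this.

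The missing lemma, which is where odd side lengths enter, goes as follows.
\begin{itemize}
\item Parametrize $C_v$ by $z\mapsto(\alpha z^{a}(1-z)^{b},\beta z^{c}(1-z)^{e})$ with $\alpha,\beta$ real.
\item The arcs $(-\infty,0)$, $(0,1)$, $(1,\infty)$ of $\R\P^1$ land in quadrants of $(\R^{\times})^2$ whose sign vectors differ by $(a,c)$ and $(b,e)$ mod $2$.
\item These two vectors and their sum are odd multiples of primitive vectors. Hence they are nonzero and distinct mod $2$, and the three arcs lie in three different quadrants, so distinct arcs never meet.
\item The logarithmic Gauss map $z\mapsto[a(1-z)-bz:c(1-z)-ez]$ has degree one. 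So on each arc the tangent direction of $\operatorname{Log}(C_v)$ turns monotonically by less than $\pi$, which excludes self-crossings.
\end{itemize}
Together with nodality of $C_v$, only this gives $\Wel_\R(C_v)=(-1)^{I(\Delta_v)}$.

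\textbf{The node inventory is incomplete.} Besides the $I(\Delta_v)$ nodes at trivalent vertices and the $\Area(\Delta_p)$ nodes at crossings, a curve tropicalizing to $\Gamma$ has $w_e-1$ nodes near each bounded edge $e$ of weight $w_e$. This is forced by the genus count $\frac{(d-1)(d-2)}{2}=\sum_v I(\Delta_v)+\sum_p\Area(\Delta_p)+\sum_e(w_e-1)$, summing over bounded edges of the parametrizing tree.
\begin{itemize}
\item You discuss these nodes only for even $w_e$, but odd weights $w_e\ge 3$ are not excluded. For odd $w_e$ the branch along $e$ has a single real sheet, so every real node there joins two conjugate sheets and is solitary. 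There are evenly many such nodes, so they contribute $+1$, but this has to be argued.
\item Most of the $\Area(\Delta_p)$ crossing nodes are non-real. The correct statement is that the real ones are intersections of real sheets, hence split.
\item Your claim that the two real gluings at an even edge have opposite Welschinger signs is correct.
\end{itemize}

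\textbf{Foundational steps assumed rather than proved.} Kapranov's theorem only places the tropicalized points on $V^{\trop}(F^{\trop})$. Showing that the tropical limit of a rational curve is a rational nodal $\Gamma$ requires analysing the limit curves on all polygons of the subdivision. Likewise, the statement that exactly $\mult_\C(\Gamma)$ curves lift $\Gamma$, transversally, is the substance of Mikhalkin's correspondence. It does not follow from the numerical identity in Theorem \ref{thm:mikhalkincomplexcorrespondence}. As written, your text is an accurate roadmap but not a proof.
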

\begin{rmk}
\label{rmk:realandcomplexcorrespondence}
    The sums in Theorem \ref{thm:mikhalkincomplexcorrespondence} and Theorem \ref{thm:Mikhalkinrealcorrespondence} range over the same tropical curves, only the multiplicities differ.
\end{rmk}

For $W_{d,n_2}$ with $n_2\ge 0$ Shustin defines a more general multiplicity $\mult_\R(\Gamma)$ and provides a tropical correspondence theorem.
However, the tropical curves considered here are different.
\begin{thm}[Shustin \cite{Shustin}]
\label{thm:shustin}
    $$W_{d,n_2}=W_{d,n_2}^{\trop}=\sum\mult_\R(\Gamma)$$ where the sum goes over all rational tropical curves with Newton polygon $\Delta_d$ through a configuration of $n_1$ ``thin" (corresponding to real points) and $n_2$ ``fat" points (corresponding to complex points) in $\R^2$ in tropical general position. 
\end{thm}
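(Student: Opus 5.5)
The plan is a degeneration, or patchworking, argument over the real Puiseux series field $\Puiseux{\R}$, which is real closed. By Welschinger's theorem, $W_{d,n_2}$ does not depend on the configuration. The Tarski--Seidenberg transfer principle then lets us compute it from a configuration $\mathcal{P}$ over $\Puiseux{\R}$ instead of $\R$. We choose $\mathcal{P}$ to consist of $n_1$ points of $\Puiseux{\R}^2$ and $n_2$ pairs of complex conjugate points of $\Puiseux{\C}^2$. We arrange that the tropicalizations are in tropical general position. The real points tropicalize to the thin points and each conjugate pair tropicalizes to a single fat point. Since both points of a pair have the same valuation, a fat point carries the information of two point conditions.

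\textbf{Step 1 (tropicalization).} Let $C$ be a real rational curve of degree $d$ through $\mathcal{P}$. Its tropicalization is a rational tropical curve $\Gamma$ with Newton polygon $\Delta_d$. It passes through all thin and fat points, and a dimension count forces it to be nodal and simple. At a fat point, $\Gamma$ must absorb two conditions. By general position this forces either a $4$-valent crossing at the fat point, or an edge of even weight through it along which the two conjugate branches degenerate. This reproduces the class of tropical curves in the statement. The dual subdivision $\operatorname{DS}(\Gamma)$ then consists only of triangles and parallelograms.

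\textbf{Step 2 (patchworking).} We reverse the process: given such a $\Gamma$, we enumerate the real curves $C$ over $\Puiseux{\R}$ that tropicalize to $\Gamma$ and pass through $\mathcal{P}$. For each triangle of $\operatorname{DS}(\Gamma)$ we take a rational limit curve on the corresponding toric surface. Its coefficients are constrained by the initial coefficients of the points of $\mathcal{P}$, and for each edge of weight $m$ we attach a deformation pattern. A Shustin-type patchworking theorem (an implicit function argument on the space of deformations) shows that every compatible collection of real initial data lifts to exactly one curve $C$. The proof needs transversality of the relevant linear systems, which follows from tropical general position.

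\textbf{Step 3 (signs).} We compute $\sum_C \Wel_\R(C)$ over the curves lifting $\Gamma$ as a product of local factors. The nodes of $C$ come from three sources: the nodes of the limit curves in the triangles, the parallelograms, and the deformation patterns of edges of weight $m>1$. Each node is split or solitary according to signs of the initial coefficients, which depend only on the parity and position of lattice points. An odd-weight edge contributes $\pm1$. An even-weight edge contributes a weighted count of real deformation patterns, which differs according to whether the edge is unmarked, passes through a thin point, or passes through a fat point. Multiplying these contributions gives $\mult_\R(\Gamma)$. Summing over all $\Gamma$ gives $W_{d,n_2}=W^{\trop}_{d,n_2}$.

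I expect the main obstacle to be the local analysis at even-weight edges, and especially at fat points. There one must enumerate the real deformation patterns and track the types of the nodes they create, together with their interaction with the complex conjugate pairs. I would first settle this by a direct computation in the model case of a single weight-$m$ edge in the toric surface of a trapezoid, and only then globalize. The patchworking transversality of Step 2 is the second hardest point.
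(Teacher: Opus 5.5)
The paper gives no proof of this theorem; it only quotes it from Shustin. Shustin's own argument is a tropical-limit-plus-patchworking argument with a local computation of Welschinger signs, so your overall route is the right one. The gap is in Step 1, which misplaces the fat points, and that is exactly where Shustin's theorem differs from Mikhalkin's.

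\textbf{Fat points cannot sit at crossings.} Complex conjugation preserves valuations, so it acts on the tropical limit. Suppose $w$ and $\bar w$ lay on the two different branches through a crossing. Then conjugation would have to exchange those branches, which forces them to have the same tropical image, so they would not cross. If instead both branches are real, the limit polynomial on the dual parallelogram is, up to a monomial, $(z^{u_1}-\xi_1)^{m_1}(z^{u_2}-\xi_2)^{m_2}$. Uniqueness of this factorization together with reality forces $\xi_1,\xi_2\in\R$. A generic non-real initial point $w^0$ then satisfies neither $(w^0)^{u_i}=\xi_i$. The same argument rules out a fat point in the interior of any edge that is not the common image of two conjugate branches.

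\textbf{The basic case is missing.} Your dichotomy omits a fat point at a trivalent vertex. There the real limit curve of the dual triangle passes through $w^0$ and $\bar w^0$, which is two real conditions on its two-dimensional real torus family. Already for $d=1$, $n_2=1$ your list yields no tropical line at all. Yet $W_{1,1}=1$, coming from the tropical line whose vertex is the fat point. So Step 1 does not reproduce the class of curves in the statement.

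\textbf{The even-edge case is not simple.} This case does occur, but it contradicts your claim that the curve is simple. The even edge is the common image of two conjugate edges of weight $m/2$ of the parametrizing tree, and in fact of a conjugate pair of whole subtrees with the same image, whose ends become ends of weight $2$. The dimension count is therefore not ``two conditions per fat point''. Such a fat point imposes one tropical condition, and this is compensated by the doubled branch having half as many free ends.

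\textbf{Consequences for Steps 2 and 3.} Three things change.
\begin{enumerate}
\item On a doubled triangle the limit polynomial is $g\bar g$ with $g$ non-real, not a rational limit curve.
\item Along a doubled edge the local model is a conjugate pair of branches, not one deformation pattern of weight $m$.
\item The nodes of $C$ include the real intersection points of the two conjugate pieces, and these are always solitary, locally $(x+iy)(x-iy)=x^2+y^2$.
\end{enumerate}
These solitary nodes, together with the signed count of real triangle curves through a conjugate pair at fat vertices, are the local contributions specific to fat points in $\mult_\R(\Gamma)$. Your list of node sources in Step 3 misses both, so the local analysis as planned would not produce the correct multiplicity.
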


Remark \ref{rmk:realandcomplexcorrespondence} already indicates that there should be a weighted count in $\GW(k)$ that specializes to both $N_d$ and $W_{d,0}$.
In fact, this works according to work of Levine \cite{LevineWelschinger} and recent work of Kass-Levine-Solomon-Wickelgren \cite{KassLevineSolomonWickelgren,KassLevineSolomonWickelgrenOrientation}.
We elaborate a little bit on the approach of Kass-Levine-Solomon-Wickelgren which expresses this weighted count of rational curves in $\GW(k)$ as the $\A^1$-degree of an evaluation map. 

Suppose $k$ is a perfect field of characteristic not equal to $2$ or $3$. 
    Let $\overline{\mathcal{M}}_{0,n}(\P^2_k,d)$ be the \emph{Kontsevich moduli space of $n$-marked genus $0$ stable maps to $\P^2$ of degree $d$}. That means the points correspond to $(f\colon C\rightarrow \P^2, x_1,\ldots,x_n)$ where $C$ is a nodal curve of arithmetic genus $0$, the $x_i\in C$ are smooth pairwise different points, the image of $C$ is a degree $d$ curve. Stable means that $(f\colon C\rightarrow \P^2, x_1,\ldots,x_n)$ has only finitely many automorphisms.
    Let $\sigma=(L_1,\ldots,L_r)$ be a list of finite field extensions $L_i/k$ such that $\sum_{i=1}^r[L_i:k]=n$. 
    There is a twisted evaluation map (see \cite[$\S5$]{KassLevineSolomonWickelgren})
    \[\ev_\sigma\colon (\overline{\mathcal{M}}_{0,n}(\P^2_k,d))_\sigma\rightarrow \prod_{i=1}^r\operatorname{Res}_{L_i/k}\P^2_k\]
    where $\operatorname{Res}_{L_i/k}\P^2_k$ is the restriction of scalars from $L_i$ to $k$.
    Now take a point configuration of $r$ points $p_1,\ldots,p_r$ in general position with residue field $\kappa(p_i)=L_i$ for $i=1,\ldots,r$. This defines a $k$-point in $\prod_{i=1}^r\operatorname{Res}_{L_i/k}\P^2_k$. Then in the preimage of this point configuration are exactly the stable maps with image a degree $d$ rational plane curve through this point configuration, which is exactly what we are after.
    
\begin{thm}[Kass-Levine-Solomon-Wickelgren \cite{KassLevineSolomonWickelgren}]
\label{thm:KLSW}
Suppose $k$ is a perfect field of characteristic not equal to $2$ or $3$ and $\sigma=(L_1,\ldots,L_r)$ be a list of finite field extensions $L_i/k$ such that $\sum_{i=1}^r[L_i:k]=n$. 
Then
    \[N^{\A^1}_{d,\sigma}\coloneqq \deg^{\A^1}\left(\ev_\sigma\colon \overline{\mathcal{M}}_{0,n}(\P^2_k,d)\rightarrow \prod_{i=1}^r\operatorname{Res}_{L_i/k}\P^2_k \right)\]
    is a well-defined element of $\GW(k)$. 
    Also,
    \[N^{\A^1}_{d,\sigma}=\deg^{\A^1}(\ev_\sigma)=\sum \deg^{\A^1}_u\ev_\sigma\]
    can be written as the sum of local $\A^1$-degrees over all stable maps that map to a curve passing through a given point configuration of $r$ points $p_1,\ldots,p_r$ in general position with residue field $\kappa(p_i)=L_i$ for $i=1,\ldots,r$.
\end{thm}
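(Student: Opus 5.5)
The statement has two parts: that $N^{\A^1}_{d,\sigma}$ is a well-defined element of $\GW(k)$, and that it equals the sum of local $\A^1$-degrees over the fibre of $\ev_\sigma$ at a general point configuration. I would follow the pattern of the Poincaré--Hopf theorem (Theorem \ref{thm:A1PHthm}). The plan is to express $\deg^{\A^1}(\ev_\sigma)$ as an Euler number of a relatively oriented vector bundle, or, if we accept a degree theory for maps between smooth proper schemes of equal dimension, to show directly that the sum of local degrees is locally constant in the target point.

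\textbf{Step 1: set up the spaces.} Both sides have dimension $2n$: $\dim \overline{\mathcal{M}}_{0,n}(\P^2_k,d)=3d-1+n=2n$ since $n=3d-1$, and $\dim \prod_i \operatorname{Res}_{L_i/k}\P^2_k=2\sum_i[L_i:k]=2n$. The source is not a smooth scheme, only a proper Deligne--Mumford stack with quotient singularities. I would therefore restrict to the open locus $M^{\circ}$ of stable maps with trivial automorphism group and irreducible source. Its complement maps to a subset of codimension at least $1$ (in fact $2$ for the relevant strata) in the target, by a dimension count over the boundary strata of reducible maps, which are birational onto cuspidal or reducible image curves.

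\textbf{Step 2: orientation.} Show that $\ev_\sigma$ is relatively oriented, i.e.\ $\omega_{M}\otimes \ev_\sigma^*\omega_{\text{target}}^{-1}$ is the square of a line bundle, at least over the complement of a codimension $2$ locus. This is done by computing the canonical class of the Kontsevich space in terms of boundary divisors and the classes $\psi_i$ and $\ev_i^*H$, and checking parity. The characteristic assumption ($\neq 2,3$) enters here, and in ensuring that general fibres are reduced and consist of curves with only nodes. This is the step I expect to be the main obstacle: the orientability genuinely uses $d$-dependent parity properties of the boundary divisors, and one must handle the twisting by $\sigma$ via Weil restriction.

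\textbf{Step 3: invariance.} Given the relative orientation, define the local degrees at isolated preimages via Nisnevich coordinates as in Definition \ref{def:localA1degree}, transported by the orientation. Then show that $\sum_u \deg^{\A^1}_u \ev_\sigma$ is independent of the general target point $p\in \prod_i\operatorname{Res}_{L_i/k}\P^2_k(k)$. For this, connect two general points by a chain of $\A^1$'s in the target, which is $\A^1$-chain connected since it is rational. Along each $\A^1$, apply the family version of the Scheja--Storch/Bézoutian form (as in \cite{BachmannWickelgren} and the formula of Proposition \ref{prop:localA1degree}): over the finite flat part of the family, the associated symmetric bilinear form extends over $\A^1$, so its class is constant by homotopy invariance of $\GW$, $\GW(k[t])=\GW(k)$. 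Properness ensures that no preimages escape, and the codimension-$2$ statement from Step 1 lets the path avoid the bad locus.

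\textbf{Step 4: conclude.} The common value is Morel's degree $\deg^{\A^1}(\ev_\sigma)$, which yields the displayed decomposition into local $\A^1$-degrees. Finally, note that general point configurations with prescribed residue fields $\kappa(p_i)=L_i$ exist because $k$ is infinite (perfect, of characteristic $\neq 2,3$). The finite field case needs a separate trick, namely passing to odd degree extensions and using Springer's theorem.
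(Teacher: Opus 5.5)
The notes do not prove this theorem. They cite \cite{KassLevineSolomonWickelgren} and name relative orientability of $\ev_\sigma$ \cite{KassLevineSolomonWickelgrenOrientation} as the essential input, which your Step~2 correctly flags as the main obstacle. However, your Steps~1 and~3 contain a genuine error.

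It is false that the complement of your locus $M^{\circ}$ (irreducible source, no automorphisms) has image of codimension $\ge 2$ in the target. Take a boundary divisor whose general member has source $C_A\cup C_B$, with $f|_{C_A}$ of degree $d_A\ge1$ carrying the markings in $A$ and $f|_{C_B}$ of degree $d_B\ge 1$ carrying those in $B$. Its image has codimension $\max(0,|A|-3d_A+1)+\max(0,|B|-3d_B+1)$. Since $|A|+|B|=3d-1$, the two quantities $|A|-3d_A+1$ and $|B|-3d_B+1$ sum to $1$. So every such divisor with $|A|=3d_A$ and $|B|=3d_B-1$ dominates a divisor of the target.

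Concretely, take $d_A=1$ and consider configurations in which three of the points are collinear. Over this wall the fibre contains the $(d-1)N_{d-1}$ reducible maps ``line through the three points $\cup$ a rational curve of degree $d-1$ through the remaining $3d-4$ points''. Such walls cannot in general be dodged by chains of $\A^1$'s: already over $\R$ they cut the real configuration space into chambers, and Welschinger invariance is precisely a wall-crossing statement. So in Step~3 the family $M^{\circ}\cap\ev_\sigma^{-1}(\A^1)\to\A^1$ is not finite, because points of the fibre run into the excised boundary. The $\GW(k[t])=\GW(k)$ argument then has no finite flat family to act on. ``Properness ensures that no preimages escape'' is exactly what fails once the boundary is removed.

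The fix is to keep the generic points of these boundary divisors in the source. There the maps have no automorphisms, the moduli space is smooth because $\P^2$ is convex, and $\ev_\sigma$ is even \'etale. You should excise only loci whose image has codimension $\ge 2$, such as maps with nontrivial automorphisms or points with positive-dimensional fibres.

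The heart of the theorem is then to show that $\omega_{\ev_\sigma}\cong\mathcal{L}^{\otimes 2}$ on this larger locus. That is, the orientation must extend across the boundary divisors and across the interior cuspidal ramification divisor. It must also be compatible with permuting the markings, so that it descends to the twisted form $(\overline{\mathcal{M}}_{0,n}(\P^2_k,d))_\sigma$.

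An orientation on $M^{\circ}$ alone is not enough. Suppose the isomorphism $\omega_{\ev_\sigma}|_{M^\circ}\cong\mathcal{L}^{\otimes 2}$ only extends to $\omega_{\ev_\sigma}\cong\mathcal{L}^{\otimes 2}(jD)$ with $j$ odd along a boundary divisor $D$. Then, already over $\R$, the local contribution at a point continued through the wall $\ev_\sigma(D)$ jumps from $\langle a\rangle$ to $\langle -a\rangle$, and invariance fails.

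Once such an orientation exists off a locus with image of codimension $\ge 2$, your $\A^1$-chain argument with Harder's theorem is a legitimate substitute for the Chow--Witt pushforward. Note that Morel's degree only applies to endomorphisms of $\P^n_k/\P^{n-1}_k$, so the global degree of $\ev_\sigma$ has to be defined this way rather than quoted.

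Finally, perfect fields of characteristic $\neq 2,3$ need not be infinite (e.g.\ $\mathbb{F}_p$ with $p\ge 5$). So the existence claim in Step~4 is not justified as stated.
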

This is really non-trivial. To show that the $\A^1$-degree of this map is well-defined, one needs to show that it is \emph{relatively orientable} \cite{KassLevineSolomonWickelgrenOrientation}.

Recall that the Welschinger sign $\Wel_\R(C)$ depends on the type of real nodes. We want to generalize this.
\begin{definition}
    Let $C$ be a plane nodal curve over $k$ and let $z$ be a node of $C$. Then locally around $z$ the curve $C$ is given by an equation of the form $x^2-\alpha y^2=0$ when base changing to the residue field $\kappa(z)$ of $z$.
    The \emph{type} of $z$ is $\type(z)\coloneqq \alpha\in \nicefrac{\kappa(z)^\times}{(\kappa(z)^\times)^2}$.
    The \emph{quadratic weight} of $C$ is
    \[\Wel_k^{\A^1}(C)\coloneqq \prod_{\text{nodes $z$}}\langle N_{\kappa(z)/k} \type(z)\rangle \in \GW(k)\]
    where $N_{\kappa(z)/k}$ is the field norm.
\end{definition}
\begin{remark}
    For a real curve $C$ one has $\Wel^{\A^1}_\R(C)=\langle\Wel_\R(C)\rangle$ in $\GW(\R)$.
\end{remark}

The next theorem identifies the local $\A^1$-degrees in Theorem \ref{thm:KLSW} with the quadratic weight of the image of the stable map.
\begin{thm}[Kass-Levine-Solomon-Wickelgren \cite{KassLevineSolomonWickelgren}]
Suppose $k$ is a perfect field of characteristic not equal to $2$ or $3$ and $\sigma=(L_1,\ldots,L_r)$ be a list of finite field extensions $L_i/k$ such that $\sum_{i=1}^r[L_i:k]=n$. 
    \[\deg_u^{\A^1}\ev_\sigma=\Tr_{\kappa(u)/k}\left(\Wel_{\kappa(u)}^{\A^1}(u(C))\right)\]
    where $\kappa(u)$ is the residue field of the stable map $u\colon C\rightarrow \P^2$ in $\overline{\mathcal{M}}_{0,n}(\P^2,d)$. 
\end{thm}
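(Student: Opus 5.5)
The strategy is to compute the local $\A^1$-degree of $\ev_\sigma$ at an unramified point $u$ and compare it with the node types of $u(C)$. Fix a general configuration $p_1,\ldots,p_r$ with $\kappa(p_i)=L_i$. For general position, every stable map $u$ in the fiber has a smooth domain $C\cong\P^1_{\kappa(u)}$, $u(C)$ is a nodal rational curve, and $\ev_\sigma$ is étale at $u$. Since $\kappa(u)/k$ is separable ($k$ perfect), Proposition \ref{prop:localA1degree} gives
\[
\deg^{\A^1}_u\ev_\sigma=\Tr_{\kappa(u)/k}\langle \det \operatorname{Jac}\ev_\sigma(u)\rangle .
\]
Here the Jacobian is taken in Nisnevich coordinates and trivializations compatible with the relative orientation of \cite{KassLevineSolomonWickelgrenOrientation}. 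The trace is compatible with the twisting by $\sigma$: base changing to $\kappa(u)$ untwists $\ev_\sigma$ to the ordinary evaluation map. So it suffices to show that, after base change to $L=\kappa(u)$, the square class of $\det \operatorname{Jac}\ev(u)$ equals $\prod_z N_{\kappa(z)/L}\type(z)$, where $z$ runs over the nodes of $u(C)$.

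To compute the Jacobian, I would use the deformation-theoretic description. The tangent space to $\overline{\mathcal{M}}_{0,n}(\P^2,d)$ at $u$ is $H^0(C,N_u)\oplus\bigoplus_i T_{x_i}C$, where $N_u=u^*T_{\P^2}/T_C\cong \glob_{\P^1}(3d-2)$. The differential of $\ev$ sends a deformation to the values of the section at the marked points, together with the tangent directions. Modulo the tangent directions, which contribute an upper triangular block, the determinant reduces to evaluating $H^0(\glob(3d-2))$ at the $n-1=3d-2$ points beyond those already used, plus one remaining normal coordinate. This is a Vandermonde-type determinant in explicit coordinates. The relative orientation is built from the canonical isomorphism
\[
\det T\overline{\mathcal{M}}\otimes\det \ev^*T^{-1}\cong\mathcal{L}^{\otimes 2},
\]
and the square root is expressed through the discriminant of the pullback $u^*$ of the defining polynomial of the image curve. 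The point of this step is that everything which is a square for a canonical reason cancels.

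The key step, and the main obstacle, is to identify the remaining non-square factor with the node types. Write $u=(u_0:u_1:u_2)$ with $u_i\in L[s,t]_d$ and let $F$ be the degree $d$ equation of $u(C)$. A node $z$ corresponds to a pair of points $\{s_1,s_2\}$ of $\P^1$ with the same image, defined over $\kappa(z)$ (the pair may be conjugate). Its type is, up to squares, $-\det\bigl(u'(s_1),u'(s_2),u(s_1)\bigr)\cdot(\text{the Hessian normalization})$, equivalently the discriminant of the two tangent branches. I would express $\det\operatorname{Jac}\ev$ as a resultant-style product over all pairs of preimages of the nodes, using the adjoint conditions: sections of $N_u$ correspond to degree $d$ forms modulo $F$ restricted to the curve, which are compatible with the adjoint ideal. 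I would then show that each node contributes a factor equal to its branch discriminant times a square, by a local computation at $x^2-\alpha y^2$. Taking norms $N_{\kappa(z)/L}$ handles nodes that are not $L$-rational. Finally, because $\GW$ classes are multiplicative, $\langle\prod N\type(z)\rangle=\prod\langle N\type(z)\rangle=\Wel^{\A^1}_L(u(C))$, and the trace gives the claimed formula.

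As a consistency check, I would verify that for $k=\R$ the signature recovers Welschinger's theorem and for $k=\C$ the rank recovers $N_d$. If the direct global computation becomes unmanageable, the fallback is a degeneration or real-closed argument: prove the identity over $\R$ and over finite fields, then use that a unit square class is detected by its signature together with residues.
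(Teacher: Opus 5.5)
The notes state this result without proof and quote it from \cite{KassLevineSolomonWickelgren}, so I am measuring your proposal against what any proof has to do. Your set-up is fine: over a general configuration the fibre consists of immersed maps with nodal image, $\ev_\sigma$ is étale there, $\deg^{\A^1}_u\ev_\sigma=\Tr_{\kappa(u)/k}\langle J\rangle$, and $d\ev_{\sigma,u}$ is block-triangular with a Vandermonde-type block. But the proposal stops exactly where the content begins.

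\textbf{The orientation is never used.} Local indices depend on the relative orientation (cf.\ the remark after Theorem~\ref{thm:A1PHthm}). At an étale point, $\det d\ev_{\sigma,u}$ is itself a generator of the fibre at $u$ of $\sheafHom(\det T_{\overline{\mathcal{M}}_\sigma},\ev_\sigma^*\det T_Y)$, where $\overline{\mathcal{M}}_\sigma$ is the twisted moduli space and $Y=\prod_i\operatorname{Res}_{L_i/k}\P^2_k$. So computing it ``in explicit coordinates'' gives a number with arbitrary square class, until the coordinates are tied to the specific orientation $\rho$ of \cite{KassLevineSolomonWickelgrenOrientation}. The theorem is the computation of $c=\rho(\det d\ev_{\sigma,u})/(l\otimes l)\in\kappa(u)^\times$ for a local generator $l$ of $\mathcal{L}$, and you never construct $\rho$ or evaluate it at $u$:
\begin{itemize}
\item The proposed square root via ``the discriminant of $u^*F$'' is meaningless, since $F\circ u$ vanishes identically on $C$.
\item ``Everything which is a square for a canonical reason cancels'' is the claim itself.
\item A local computation at $x^2-\alpha y^2$ cannot by itself produce $c$. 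That ratio is a global invariant of $(u,x_1,\ldots,x_n)$, and nothing in your argument explains why it should factor over the nodes.
\end{itemize}
To make the target concrete: modulo squares, $\prod_z N_{\kappa(z)/\kappa(u)}\type(z)$ is the discriminant of the étale $\kappa(u)$-algebra of the double-point divisor $\Delta_u\subset C$, i.e.\ the $2\delta$ preimages of the nodes. Indeed, the fibre of $\Delta_u$ over a node $z$ is $\kappa(z)[T]/(T^2-\type(z))$, of discriminant $4\type(z)$. The tower formula for discriminants along $\Delta_u\to\Delta_u/\iota\to\operatorname{Spec}\kappa(u)$, with $\iota$ swapping the two preimages of each node, only introduces squares. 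What is missing is an explicit description of $\rho$ at immersed curves from which $c\equiv\operatorname{disc}(\Delta_u)$ modulo squares can actually be read off, for instance in terms of $N_u\cong u^*\glob(d)\otimes\glob_C(-\Delta_u)$.

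\textbf{Two of your reductions are also wrong.}
\begin{itemize}
\item Base change to $\kappa(u)$ does not untwist $\ev_\sigma$. For $d=1$ and $\sigma=(k(\sqrt a))$ with $a$ a non-square, the line through the conjugate pair of points has $\kappa(u)=k$, and its two marked points remain conjugate. So $J$ must be computed in twisted coordinates on $\overline{\mathcal{M}}_\sigma$ and on $Y$. There, factors depending on the $L_i$ appear, from the Vandermonde in conjugate marked points and from the choice of $k$-coordinates on the restrictions of scalars. You have to show that these are absorbed by the twisted orientation; this is part of the theorem, not a formality.
\item The fallback does not work. Square classes in an arbitrary perfect field are not detected by signatures and residues, and the finite-field case needs the same computation you are trying to avoid.
\end{itemize}
A minor point: $H^0(C,N_u)\cong H^0(\glob(3d-2))$ has dimension $3d-1=n$ and maps isomorphically onto $\bigoplus_{i=1}^n N_{u,x_i}$. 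There is no leftover normal coordinate.
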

    Consequently,
    \[N_{d,\sigma}^{\A^1}=\sum \Tr_{\kappa(u)/k}\left(\Wel_{\kappa(u)}^{\A^1}(u(C))\right)\]
    where the sum goes over all rational degree $d$ plane curves through a configuration of $r$ points in general position with residue field $L_1,\ldots,L_r$.
\begin{ex}
    For $k=\R$ and $\sigma=(\C,\ldots,\C,\R,\ldots,\R)$ consisting of $n_2$ times $\C$ and $n_1$ times $\R$ with $n_1+2n_2=n$ we have that
    \begin{align*}
    N_{\sigma,d}^{\A^1}&=\sum_{\text{real curves C}}\Wel_\R^{\A^1}(C)+\sum_{\text{complex curves $C$}}\Tr_{\C/\R}(\Wel_\C^{\A^1}(C))\\
    &=\sum_{\text{real curves C}}\Wel_\R^{\A^1}(C)+\sum_{\text{complex curves $C$}}\Tr_{\C/\R}(\langle1\rangle)\\
    &=\sum_{\text{real curves C}}\Wel_\R^{\A^1}(C)+\sum_{\text{complex curves $C$}}h
    \end{align*}
    which has signature
    \begin{align*}
        \sgn(N_{\sigma,d}^{\A^1})&=\sgn\left(\sum_{\text{real curves C}}(\Wel_\R^{\A^1}(C))+\sum_{\text{complex curves $C$}}h\right)\\
        &=\sgn\left(\sum_{\text{real curves C}}\Wel_\R(C)\right)=W_{d,n_2}
    \end{align*}
\end{ex}
Now let's return to the computation of $N_{d,\sigma}^{\A^1}$. Let's first consider $\sigma=(k,\ldots,k)$ with $k$ a perfect field of characteristic not equal to $2$ or $3$. Then one can define
\[\mult^{\A^1}_k(\Gamma)\coloneqq \begin{cases}
    \frac{\mult_{\C}(\Gamma)-1}{2}h+\langle \mult_\R(\Gamma)\rangle & \text{if $\mult_\R(\Gamma)\neq 0$}\\
    \frac{\mult_{\C}(\Gamma)}{2}h & \text{if $\mult_\R(\Gamma)=0$}
\end{cases}\]
which makes sense since $\mult_\R(\Gamma)\in \{-1,0,1\}$ and $\mult_\R(\Gamma)=0$ if and only if $\mult_\C(\Gamma)$ even.
There is a tropical correspondence theorem for $\sigma=(k,\ldots,k)$.
\begin{thm}[Jaramillo Puentes-Pauli \cite{JaramilloPuentesPauliCorrespondence}]
\label{thm:JPPcorrespondence}
Let $k$ be a perfect field of characteristic $0$ or greater than $d$.
For $\sigma=(k,\ldots,k)$ one has
\[N_{d,\sigma}^{\A^1}=N_{d,\sigma}^{\trop}\coloneqq \sum_\Gamma\mult_k^{\A^1}(\Gamma)\]
    where the sum goes over all rational nodal tropical curves with Newton polygon $\Delta_d$ through a configuration of $3d-1$ points in general position.
\end{thm}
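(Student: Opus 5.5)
The plan is a degeneration argument, as in Mikhalkin's proof, with each local contribution enriched in $\GW(k)$. By Exercise \ref{exercise:GWPuiseux}, $\GW(k)\cong\GW(\Puiseux{k})$, and the $\A^1$-degree of $\ev_\sigma$ is compatible with this base change. So it suffices to compute $N_{d,\sigma}^{\A^1}$ over $\Puiseux{k}$ for one convenient configuration. By Theorem \ref{thm:KLSW} the answer is independent of that choice. We choose $3d-1$ points $\tp^{(1)},\ldots,\tp^{(3d-1)}\in(\Puiseux{k}^\times)^2$ that are $k$-rational and in general position, and whose tropicalizations form a configuration in tropical general position. Then
\[N_{d,\sigma}^{\A^1}=\sum_C \Tr_{\kappa(C)/\Puiseux{k}}\bigl(\Wel^{\A^1}_{\kappa(C)}(C)\bigr),\]
where the sum runs over rational degree $d$ curves through these points. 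Each such curve tropicalizes to a rational nodal tropical curve $\Gamma$ through the tropical configuration. We group the curves according to $\Gamma$ and aim to prove, for each fixed $\Gamma$, that
\[\sum_{C\mapsto \Gamma}\Tr_{\kappa(C)/\Puiseux{k}}\bigl(\Wel^{\A^1}(C)\bigr)=\mult^{\A^1}_k(\Gamma).\]

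For each $\Gamma$, I would invoke Mikhalkin's and Shustin's patchworking description of the curves tropicalizing to $\Gamma$. Over $\overline{k}$, such a curve is determined by its limit curves: for each triangle in the dual subdivision there is a rational curve with three nonzero coefficients, and for each parallelogram a pair of lines. These are glued along the edges by choosing deformation parameters. An edge of weight $w$ contributes $w$ choices of a $w$-th root, and the number of lifts is $\mult_\C(\Gamma)=\prod_{\text{triangles}}\Area$. Next I track the Galois action of $\operatorname{Gal}(\overline{k}/k)$ on these choices. Because the marked points are $k$-rational, the initial coefficients are $k$-rational up to the root choices. The $k$-rational lifts are governed by which $w$-th roots exist in $k$, and the non-rational lifts come in orbits whose residue fields are determined by those roots.

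Then I compute the contributions. The nodes of $C$ split into two kinds. The first kind are the nodes at $4$-valent vertices, which are intersections of two branches and are of the form already computed in Theorem \ref{thm:enrichedintersectionmult}. The second kind are the nodes hidden in the limit curves of triangles and in edges of even weight, which come from the local model $y^w=\text{const}\cdot x^w$. For each node I compute the type $\alpha$ from the initial coefficients. The case analysis then runs as follows. If $\mult_\C(\Gamma)$ is odd, all weights are odd, and exactly one lift is $k$-rational. Its quadratic weight should equal $\langle\mult_\R(\Gamma)\rangle$ up to squares, with $\mult_\R(\Gamma)=\pm1$ read off as in Mikhalkin's real count. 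The remaining lifts come in Galois orbits whose traces, using $\langle a\rangle+\langle -a\rangle=h$ and trace computations of the form $\Tr_{L/k}\langle c\rangle$ for Kummer extensions $L=k(\sqrt[w]{b})$, should sum to $\frac{\mult_\C-1}{2}h$. If some weight is even, the lifts pair up with opposite types, giving $\frac{\mult_\C}{2}h$.

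The main obstacle is this last step: controlling the traces of quadratic weights over the non-rational Kummer-type residue fields and showing that they are hyperbolic. I would try first to reduce to Witt classes. By Remark \ref{rmk:wittandrankdeterminegw} and $\rk=\mult_\C(\Gamma)$, it suffices to show that the sum equals $\langle\mult_\R(\Gamma)\rangle$ or $0$ in $\operatorname{W}(k)$. I would prove this with the standard fact that $\Tr_{L/k}\langle c\rangle$ is hyperbolic, or has a controlled odd part, for $L=k[x]/(x^w-b)$. The hypotheses on the characteristic ensure that these extensions are separable and that all lifts lie in $\Puiseux{\overline{k}}$.
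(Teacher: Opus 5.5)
Your global architecture is reasonable: base change to $\Puiseux{k}$, take $k$-rational points with tropically generic tropicalizations, group the KLSW sum by the tropical curve $\Gamma$, and reduce to $\operatorname{W}(k)$ using $\rk=\mult_\C(\Gamma)$. The notes only cite \cite{JaramilloPuentesPauliCorrespondence} for this theorem, so everything rests on your per-$\Gamma$ computation, and that computation fails as planned. For $\mult_\C(\Gamma)$ odd you claim three things: exactly one lift is $k$-rational, its weight is $\langle\mult_\R(\Gamma)\rangle$, and the other Galois orbits contribute hyperbolic forms. All three are artifacts of the real case. Concretely, let $d=3$ and let $\Gamma$ be the tree whose dual subdivision consists of $T=\operatorname{Conv}\{(0,0),(2,1),(1,2)\}$ (normalized area $3$, one interior point) and six primitive triangles. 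Then all weights are $1$, $\mult_\C(\Gamma)=3$ and $\mult_\R(\Gamma)=-1$, so $\mult^{\A^1}_k(\Gamma)=h+\langle -1\rangle$. The primitive vertices have unique limit curves, so the lifts are governed by the limit curve $t\mapsto(\alpha t(t-1),\beta t^{-2}(t-1))$ of $T$. The incoming edges prescribe $\alpha^2\beta$ and $\alpha\beta^2$, so the lifts form the \'etale algebra $E\cong F[\beta]/(\beta^3-c)$ over $F=\Puiseux{k}$, with $c\in k^\times$ depending on the points. The only node of this limit curve, hence of each lift (a rational cubic), comes from the parameter pair $\{t_0,1-t_0\}$ with $t_0^2-t_0+1=0$. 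Its branches are swapped by $\sqrt{-3}\mapsto-\sqrt{-3}$, so every lift has $\Wel^{\A^1}=\langle -3\rangle$, independently of $\beta$. Now take $k=\Q$. If $c$ is not a cube, there is no rational lift at all. If $c=1$, the rational lift has weight $\langle -3\rangle\neq\langle -1\rangle$, and the point with residue field $F(\sqrt{-3})$ contributes $\Tr\langle -3\rangle=\langle 2\rangle+\langle -6\rangle$, which is not $h$ since $12$ is not a square. The theorem holds only globally: $\Tr_{E/F}\langle 1\rangle=\langle 3\rangle+h$, so $\Tr_{E/F}\langle -3\rangle=\langle -1\rangle+h$.

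So the missing idea is that you cannot isolate a rational lift. You must compute $\Wel^{\A^1}$ as an explicit function on the whole (iterated Kummer) \'etale algebra of lifts and take its trace at once, using $\Tr_{F[u]/(u^m-c)/F}\langle 1\rangle=\langle m\rangle+\frac{m-1}{2}h$ for odd $m$. Your ``standard fact'' that such traces are hyperbolic is false, since for odd $m$ these forms have odd rank. The odd part $\langle m\rangle$ must be cancelled by a factor $\langle m\rangle$ hidden in the node types of the limit curves and deformation patterns, like the $-3$ above. Determining these types is the real content of the proof, and your plan does not address it. The even case has the same defect: lifts that are not individually rational cannot ``pair up with opposite types''. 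For a constant weight $\langle a\rangle$ and $m$ even one gets $\Tr_{F[u]/(u^m-c)/F}\langle a\rangle=\langle ma\rangle+\langle mac\rangle+\frac{m-2}{2}h$, which is hyperbolic only when $-c$ is a square, so again you must compute how the weights depend on the lift. Your node bookkeeping also needs three corrections:
\begin{itemize}
\item Every edge of weight $w\ge 2$, odd or even, carries $w-1$ nodes from its deformation patterns; $y^w=\mathrm{const}\cdot x^w$ is an ordinary $w$-fold point and does not produce them.
\item Limit curves of non-primitive triangles are not trinomials.
\item Nodes at $4$-valent vertices are not what Theorem \ref{thm:enrichedintersectionmult} computes, since that theorem sums Jacobians of two distinct curves. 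Their two branches tropicalize to different edges and are therefore each Galois-stable, so these nodes are split and contribute $\langle 1\rangle$.
\end{itemize}
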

A direct corollary is the following.
\begin{cor}
Let $k$ be a perfect field of characteristic $0$ or big.
For $\sigma=(k,\ldots,k)$ one has
    \[N_{d,\sigma}^{\A^1}=\frac{N_d-W_{d,0}}{2}h+W_{d,0}\langle1\rangle\]
in $\GW(k)$.
\end{cor}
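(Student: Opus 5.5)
The plan is to combine Theorem \ref{thm:JPPcorrespondence} with the two classical correspondence theorems (Theorem \ref{thm:mikhalkincomplexcorrespondence} and Theorem \ref{thm:Mikhalkinrealcorrespondence}). By Remark \ref{rmk:realandcomplexcorrespondence}, all three sums range over the same finite set of rational nodal tropical curves $\Gamma$ with Newton polygon $\Delta_d$ through a fixed configuration of $3d-1$ points in tropical general position. We fix such a configuration once and for all and write
\[N_{d,\sigma}^{\A^1}=\sum_\Gamma \mult_k^{\A^1}(\Gamma),\]
which is legitimate by Theorem \ref{thm:JPPcorrespondence}.

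Next we rewrite each summand uniformly. Since $\mult_\R(\Gamma)\in\{-1,0,1\}$, we claim that in both cases of the definition
\[\mult_k^{\A^1}(\Gamma)=\frac{\mult_\C(\Gamma)-\mult_\R(\Gamma)}{2}h+\mult_\R(\Gamma)\langle 1\rangle.\]
We check this case by case. If $\mult_\R(\Gamma)=0$, it is the definition. If $\mult_\R(\Gamma)=1$, it is the definition, since $\langle 1\rangle$ appears. If $\mult_\R(\Gamma)=-1$, the definition gives $\frac{\mult_\C(\Gamma)-1}{2}h+\langle -1\rangle$. Using $\langle-1\rangle=h-\langle1\rangle$, which is the definition of $h$, this becomes $\frac{\mult_\C(\Gamma)+1}{2}h-\langle1\rangle$, matching the claimed formula. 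The coefficients are integers because $\mult_\R(\Gamma)\equiv\mult_\C(\Gamma)\bmod 2$. This parity is implicit in the stated equivalence ($\mult_\R=0$ iff $\mult_\C$ even) together with $\mult_\R=\pm1$ otherwise.

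Summing over $\Gamma$ and using linearity in $\GW(k)$ gives
\[N_{d,\sigma}^{\A^1}=\frac{\sum_\Gamma\mult_\C(\Gamma)-\sum_\Gamma\mult_\R(\Gamma)}{2}h+\Big(\sum_\Gamma\mult_\R(\Gamma)\Big)\langle1\rangle.\]
Substituting $N_d=\sum_\Gamma\mult_\C(\Gamma)$ and $W_{d,0}=\sum_\Gamma\mult_\R(\Gamma)$ from Mikhalkin's theorems then yields the claimed identity.

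The only point needing care is ensuring the three tropical sums are taken over the same set of curves. This requires a single point configuration in tropical general position, valid simultaneously for all three theorems. Since tropical general position is a generic condition, such a configuration exists. Also, the tropical counts do not depend on $k$ or the characteristic hypothesis beyond what Theorem \ref{thm:JPPcorrespondence} already needs. No further obstacle is expected.
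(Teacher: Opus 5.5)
Your proof is correct and is the argument the paper has in mind when it calls this a direct corollary of Theorem \ref{thm:JPPcorrespondence}. You rewrite each $\mult_k^{\A^1}(\Gamma)$ uniformly as $\frac{\mult_\C(\Gamma)-\mult_\R(\Gamma)}{2}h+\mult_\R(\Gamma)\langle 1\rangle$ via $\langle -1\rangle=h-\langle 1\rangle$, then sum over the common set of tropical curves from Theorems \ref{thm:mikhalkincomplexcorrespondence} and \ref{thm:Mikhalkinrealcorrespondence}.
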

\begin{rmk}
    The sum in Theorem \ref{thm:JPPcorrespondence} is over the same tropical curves as in Theorem \ref{thm:mikhalkincomplexcorrespondence} and Theorem \ref{thm:Mikhalkinrealcorrespondence}. 
\end{rmk}
What about other $\sigma$? Are there also tropical correspondence theorems similar to Shustin's theorem \ref{thm:shustin}?
Yes, due to recent work of Jaramillo Puentes-Markwig-Pauli-Röhrle \cite{JPMPRnew} for when $\sigma=(k(\sqrt{d_1}),\ldots,k(\sqrt{d_{n_2}}),k,\ldots,k)$ consists of quadratic and trivial field extensions. 
\begin{thm}[Jaramillo Puentes-Markwig-Pauli-Röhrle \cite{JPMPRnew}]
Let $k$ be a perfect field of characteristic $0$ or greater than $d$ involved 
    and let $\sigma=(k(\sqrt{d_1}),\ldots,k(\sqrt{d_{n_2}}),k,\ldots,k)$. Then
    \[N_{\sigma,d}^{\A^1}=N_{\sigma,d}^{\A^1,\trop}\coloneqq\sum_\Gamma\mult_k^{\A^1}(\Gamma)\]
    where the sum goes over all rational tropical curves through a configuration of $n_1$ ``thin" (corresponding to $k$-points) and $n_2$ ``fat" points (corresponding to $k(\sqrt{d_i})$-points) in $\R^2$ in tropical general position. 
\end{thm}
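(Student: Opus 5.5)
The plan is to mirror the structure of the proof of Theorem \ref{thm:JPPcorrespondence}: a degeneration argument over Puiseux series, followed by a local computation near each tropical curve. First, I would lift the configuration. Choose $n_1$ points in $\Puiseux{k}^2$ defined over $\Puiseux{k}$ and $n_2$ pairs of conjugate points defined over $\Puiseux{k}(\sqrt{d_i})$. Their tropicalizations should form a configuration in tropical general position. A pair of conjugate points $(\tp,\overline{\tp})$ tropicalizes to a single point of $\R^2$, which is the ``fat'' point. By Exercise \ref{exercise:GWPuiseux} we have $\GW(k)\cong\GW(\Puiseux{k})$. Together with invariance of $N^{\A^1}_{\sigma,d}$ from Theorem \ref{thm:KLSW}, this lets us compute the count over $\Puiseux{k}$ with this particular configuration. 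By the KLSW formula, the count equals $\sum_C \Tr_{\kappa(C)/\Puiseux{k}}\Wel^{\A^1}(C)$.

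Second, I would partition the algebraic curves through the lifted configuration according to their tropicalization. I would use Shustin's correspondence (Theorem \ref{thm:shustin}, and its patchworking proof over algebraically closed fields) to show two things: each such curve tropicalizes to one of the rational tropical curves $\Gamma$ through the thin/fat configuration, and the curves over $\overline{k}$ tropicalizing to a given $\Gamma$ are exactly those obtained by patchworking. This reduces the theorem to the local identity
\[\sum_{C\mapsto\Gamma}\Tr_{\kappa(C)/\Puiseux{k}}\Wel^{\A^1}(C)=\mult^{\A^1}_k(\Gamma)\]
for each $\Gamma$. The Galois action on the set of patchworked curves determines the residue fields $\kappa(C)$.

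Third, I would compute the local contribution. Patchworking builds $C$ from limit curves attached to the vertices of $\Gamma$ (trinomial curves dual to triangles, and pieces dual to parallelograms), glued along edges. The nodes of $C$ come from three sources: nodes of the limit curves, the $w(e)-1$ deformation nodes along edges of weight $w(e)$, and nodes at $4$-valent crossings. For each source I would compute $\type(z)$ up to squares. This can be done from the initial coefficients, in the same way that the parallelogram formula in Theorem \ref{thm:enrichedintersectionmult} was obtained. Fat points add edges where the conditions are conjugate, and there the refinement/gluing equations are defined over $k(\sqrt{d_i})$. Multiplying the node types and taking norms and traces gives the weight. I then expect hyperbolic pairs $h$ to appear whenever the gluing choices come in conjugate pairs or in pairs with opposite types. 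The remaining non-hyperbolic part should be $\langle\pm\rangle$-type terms matching Shustin's $\mult_\R(\Gamma)$.

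The main obstacle is this last step, in particular the edges and vertices adjacent to fat points. There the local equations involve $\sqrt{d_i}$, and the node types can lie in $k(\sqrt{d_i})$ and get normed down to $k$. My first approach would be to choose local coordinates that diagonalize the deformation of each weighted edge, written as $x^{w}=c\,y$-type equations, compute the node types explicitly as products of initial coefficients, and check Galois-equivariance. As a consistency check, taking rank and signature should recover Shustin's multiplicities (Theorem \ref{thm:shustin}) and the complex count (Theorem \ref{thm:mikhalkincomplexcorrespondence}).
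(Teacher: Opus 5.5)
The notes only quote this theorem, so there is no proof here to compare with. Your architecture is the natural one: lift the configuration to $\Puiseux{k}$, use $\GW(k)\cong\GW(\Puiseux{k})$ and invariance, sort the curves by their tropicalization, then determine Galois orbits and node types locally.

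As a proof, though, the proposal has a genuine gap exactly where the content of the theorem lies. In step three you reduce to an identity $\sum_{C\mapsto\Gamma}\Tr_{\kappa(C)/\Puiseux{k}}\Wel^{\A^1}(C)=\mult^{\A^1}_k(\Gamma)$ without ever saying what $\mult^{\A^1}_k(\Gamma)$ is for a tropical curve through fat points. The formula in the notes, built from $\mult_\C$ and $\mult_\R$, is only given for $\sigma=(k,\dots,k)$, and the notes themselves say that working out the multiplicities is the hard part. You then only describe what you expect the computation to produce: hyperbolic summands plus $\langle\pm1\rangle$-terms lifting Shustin's $\mult_\R$. That expectation is false.

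Here is why. $N^{\A^1}_{\sigma,d}$ depends on the classes $d_i$ themselves, not just on $n_2$. For cubics, $N^{\A^1}_{3,\sigma}=2h+\sum_i\Tr_{L_i/k}\langle 1\rangle$, and $\Tr_{k(\sqrt{d_i})/k}\langle1\rangle=\langle2\rangle+\langle2d_i\rangle$.

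The determinant $\GW(k)\to k^\times/(k^\times)^2$ takes only the values $\pm1$ on the subgroup generated by $h$, $\langle1\rangle$, $\langle-1\rangle$. On $N^{\A^1}_{3,\sigma}$ it equals $\prod_i d_i$. So for $k=\Q$, $n_2=1$, $d_1=3$, no assignment of multiplicities of your predicted shape can sum to the right answer.

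Relatedly, your heuristic that conjugate gluing choices produce $h$ fails precisely at fat points. A pair of curves conjugate over $k(\sqrt{d_i})$ with weight $\langle\beta\rangle$, $\beta\in k^\times$, contributes $\langle2\beta\rangle+\langle2\beta d_i\rangle$. This is hyperbolic only if $-d_i$ is a square. Likewise, a $k$-rational node whose two branches are exchanged by $\operatorname{Gal}(k(\sqrt{d_i})/k)$ has type $d_i$, not $\pm1$.

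Your proposed consistency check cannot detect any of this. Over a general field, rank and signature do not determine a class: over $\Q$, $\langle2\rangle+\langle6\rangle$ and $2\langle1\rangle$ have the same rank and signature but different determinants.

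What is missing is the actual local computation:
\begin{itemize}
\item the limit curves and deformation patterns at the vertices and edges adjacent to fat points, whose equations involve initial coefficients in $k(\sqrt{d_i})$;
\item the resulting Galois orbits and node types;
\item from these, a definition of the fat-point multiplicity. It must involve $d_i$-dependent classes such as $\langle d_i\rangle$ or $\Tr_{k(\sqrt{d_i})/k}\langle\cdot\rangle$, and it reduces to Shustin's multiplicity only after taking signatures over $\R$.
\end{itemize}

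A secondary point: Shustin's patchworking is a characteristic-$0$ (complex) result. For $\operatorname{char}k=p>d$ you would need an algebraic correspondence theorem instead. You would also need an argument that all curves through the lifted configuration are defined over $\Puiseux{\overline{k}}$, which is not automatic in positive characteristic (compare the Artin--Schreier exercise in the notes).
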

\begin{rmk}
    For $k=\R$ and $\sigma=(\C,\ldots,\C,\R,\ldots,\R)$, taking signatures in $N_{d,\sigma}^{\A^1,\trop}=\sum_\Gamma \mult_k^{\A^1}(\Gamma)$ recovers Shustin's tropical correspondence theorem \ref{thm:shustin}.
\end{rmk}

The techniques in \cite{JPMPRnew} in principle also work for general $\sigma$, the hard task is to work out the multiplicities for this general setting.
\medskip
Why is this useful?
Mikhalkin showed that counting plane tropical curves can be reduced to counting \emph{lattice paths} which can all be found by purely using combinatorics \cite{Mikhalkin}. Gathmann--Markwig gave a tropical proof of the Caporaso--Harris recursion, a relation computing all \(N_d\) from \(N_1\)~\cite{GathmannMarkwigCH}. This was also generalized to the $\GW(k)$ setting \cite{JPMPRfirst}. The problem of counting tropical curves can also be translated into counting \emph{floor diagrams}, a problem that admits explicit algorithms. This method extends to a generalization with multiplicities valued in \(\GW(k)\); see~\cite{JPMPRnew}. In short, tropical geometry yields direct, implementable algorithms for computing curve-counting invariants. There is a recent survey on how to use combinatorial tools like lattice paths for these tropical curve counting problems \cite{JPMRPsurvey}.

\bibliographystyle{alpha}
\bibliography{literature}

\end{document}